\def\newaliasedtheorem#1[#2]#3{
  \newaliascnt{#1@alt}{#2}
  \newtheorem{#1}[#1@alt]{#3}
  \expandafter\newcommand\csname #1@altname\endcsname{#3}
}
\newcommand{\myitem}[1]{%
\item[(#1)]\protected@edef\@currentlabel{#1}%
}
\theoremstyle{plain}
\newtheorem{maintheorem}{Theorem}
\DeclareMathOperator{\diver}{div}
\theoremstyle{definition}
\newtheorem{OQ}[]{Open Question}
\theoremstyle{remark}
\numberwithin{equation}{section}
\newcommand{\e}{{\rm e}}
\def\eps{\varepsilon}
\def\R{\mathbb R}
\def\N{{\mathbb N}}
\def\Z{{\mathbb Z}}
\def\T{{\mathbb T}}
\DeclareMathOperator{\stat}{stat}
\DeclareMathOperator{\err}{err}
\DeclareMathOperator{\supp}{supp}
\DeclareMathOperator{\chess}{chess}
\DeclareMathOperator{\loc}{loc}
\DeclareMathOperator{\initial}{in}
\newcommand{\weak}{\overset{*}{\rightharpoonup}}
\DeclareMathOperator{\dist}{dist}
\newcommand{\p}{\partial}
\newcommand{\lesssimlarge}{\lesssim_{q \gg 1}}
\title[Nontrivial absolutely continuous part of anomalous dissipation measures in time]
 {Nontrivial absolutely continuous part   of anomalous dissipation measures in time}
\author[C. J. P. Johansson and M. Sorella]{Carl Johan Peter Johansson and Massimo Sorella}
\address{Massimo Sorella
\hfill\break EPFL SB, Station 8, CH-1015 Lausanne, Switzerland}
\email{massimo.sorella@epfl.ch}
\address{Carl Johan Peter Johansson
\hfill\break EPFL SB, Station 8, CH-1015 Lausanne, Switzerland}
\email{carl.johansson@epfl.ch}
\thanks{Author's Accepted Manuscript of an article published in Journal of Differential Equations in Volume 453, Part 4 \url{https://doi.org/10.1016/j.jde.2025.113912}, published by Elsevier Inc. \copyright2025 Elsevier Inc.}
\begin{document}
\doclicenseThis

\begin{abstract}
We answer  \cite[Question 2.2 and Question 2.3]{BDL22} in dimension $4$ by building new examples of solutions to the forced $4d$ incompressible Navier-Stokes equations, which exhibit anomalous dissipation, related to the zeroth law of turbulence \cite{kolmo}. 
We also prove that the unique smooth solution $v_\nu$ of the $4d$ Navier--Stokes equations with time-independent body forces is $L^\infty$-weakly* converging to a solution of the forced Euler equations $v_0$ as the viscosity parameter $\nu \to 0$. Furthermore, the sequence
 $\nu |\nabla v_\nu|^2$ is weakly* converging (up to subsequences), in the sense of measure, to $\mu \in \mathcal{M} ((0,1) \times \T^4)$ and $\mu_T = \pi_{\#} \mu $ has a non-trivial absolutely continuous part where $\pi$ is the projection onto the time variable. Moreover,  we also show that $\mu$ is close, up to an error measured in $H^{-1}_{t,x}$, to the Duchon--Robert distribution  $ \mathcal{D}[v_0] $ of the solution to the $4d$ forced Euler equations.  Finally, the kinetic energy profile of $v_0$ is smooth in time.
Our result relies on a new anomalous dissipation result for  the advection--diffusion equation with a divergence free $3d$ autonomous velocity field and the study of the $3+\frac{1}{2} $ dimensional  incompressible Navier--Stokes equations.  This study motivates some  open problems.
\end{abstract}

\maketitle

\section{Introduction}

We 
 study 
the Navier--Stokes equations with a body force  on the $4$-dimensional torus $\T^4 \simeq \R^4 / \Z^4$, namely 
\begin{align} 
\tag{NS}
\label{e:NSE}
\begin{cases}
\partial_t v_{\nu} + v_\nu \cdot \nabla v_\nu + \nabla p_\nu = \nu \Delta v_\nu + F_\nu;
\\
\diver v_\nu =0; \notag
\end{cases}
\end{align}
where $v_\nu : [0, + \infty) \times \T^4 \to \R^4$ is the velocity field, $p_\nu : [0, \infty) \times \T^4 \to \R$ is the pressure, $\nu \geq 0$ is the viscosity parameter and $F_\nu : [0, + \infty) \times \T^4 \to \R^4$ is a force that may depend on $\nu$. In the case $\nu = 0$ equations~\eqref{e:NSE} reduce to the Euler equations with body force $F_0$.  We consider the Navier--Stokes equations \eqref{e:NSE} with a prescribed initial datum $v_{\initial, \nu } \in L^\infty$. The goal of this paper is to study the possible behaviour of the dissipation sequence $\nu |\nabla v_\nu|^2$ of turbulent suitable Leray-Hopf solutions (see \cite{ckn}) to the Navier--Stokes equations with body forces.
Based on data,  in the infinite Reynolds number limit, it is expected to have a continuous in time dissipation behaviour of the solutions. We clarify this physical idea giving precise mathematical definitions and statements and raising some open problems in this direction.
We first introduce the concept of {\em physical  solutions} to the Euler equations with a body force.  This definition is motivated by the still unsolved physical Onsager conjecture (see for instance in \cite{D17Thesis} for a mathematical definition) and it is consistent to the definition given in \cite[Definition 3]{CFLS16}.
\begin{definition}[Physical solutions] \label{d:physical}
    Let $d\geq 2$ and $v \in L^3 ((0,1) \times \T^d)$ be a distributional solution to the incompressible Euler equations with force $F_0 \in L^{3/2}((0,1) \times \T^d)$ in dimension $d$ with a divergence free initial datum $v_{\initial} \in L^2$. We say that $v $ is a {\em physical  solution} if:
    \begin{enumerate}
        \item there exists a sequence of initial data $\{ v_{\initial, \nu } \}_{\nu > 0}$ such that  $v_{\initial, \nu} \to v_{\initial}$ in $L^2$,
        \item \label{d:property-2} there exists a sequence of suitable Leray--Hopf solutions $\{ v_\nu \}_{\nu >0} \subset L^3_{t,x}$ of \eqref{e:NSE} (see \cite{ckn}) with  initial data $v_{\initial, \nu}$ and forces $\{ F_\nu \}_{\nu >0}$
        such that  there exists a subsequence $\{ \nu_q \}_{q \in \N}$ for which we have $v_{\nu_q} \rightharpoonup v$ as $\nu_q \to 0$ in the sense of distribution,
        \item \label{d:property-3} the forces are such that $F_\nu \equiv F_0$ for any $\nu >0$.
    \end{enumerate}
\end{definition}

Property \eqref{d:property-2} in the previous definition is motivated by the fact that the dissipation $\nu |\nabla v_\nu|^2$ is assumed to be  bounded in $L^1_{t,x}$ independently of $\nu$ in the theory of turbulence (see for instance \cite{kolmo,frisch}) and the regularity of $v$ is required to be $L^3$ because of the results by Duchon--Robert \cite[Proposition 1 and Proposition 2]{DR00}. More precisely, in \cite{DR00}, the authors proved that for any $d \geq 2$ and $(v_\nu, p_\nu)  \in L^3 \times L^{3/2}$ {weak} solution  to the incompressible Navier--Stokes equations \eqref{e:NSE} (or weak solution $(v_0, p_0) \in L^3 \times L^{3/2}$ to the incompressible Euler equations, namely \eqref{e:NSE} with $\nu=0$) with force $F_\nu \in L^{3/2} ((0,1) \times \T^d)$  there exists a distribution $\mathcal{D} [v_\nu] $, such that 
\begin{align}\label{duchon-robert}
    \partial_t \frac{| v_\nu |^2}{2} + \diver \left (v_\nu \left (\frac{|v_\nu|^2}{2} + p_\nu \right ) \right ) + \mathcal{D}[v_\nu] +  \nu |\nabla v_\nu|^2 = F_\nu \cdot v_\nu +  \frac{\nu}{2} \Delta |v_\nu|^2  \qquad \text{in } (0,1) \times \T^d
\end{align} 
holds for any $\nu \in [0, \infty) $
 in the sense of distribution.  { For suitable Leray--Hopf solutions the distribution $\mathcal{D} [v_\nu] $ is non-negative  and hence is a measure.}
 In the setting { of suitable Leray--Hopf solutions} we are interested in the anomalous dissipation, namely whether it holds true that
 \begin{align} \label{e:zeroth-law}
     \limsup_{\nu \downarrow 0} \nu \int_0^1 \int_{\T^4} | \nabla v_{\nu} |^2 >0 \,,
 \end{align}
 which is related to the zeroth-law of turbulence, \cite{kolmo} {precisely \eqref{e:zeroth-law} with $\limsup$ replaced by $\liminf$.} 
 
In this context, we remark that  there are several examples (also with force $F_0 \equiv 0$) in which the Duchon-Robert distribution $\mathcal{D}[v_0]$ is not identically zero for 3d Euler equations (see for instance \cite{DLL09,DLL13,Is18,BDLIS,BDLLOnsager,BV18,DLK20,S93,Sh97,Sh00,BMNV21,NV22}), most of them relying on the so called convex integration technique. However none of these examples, to the authors knowledge, are  proved to be {\em physical solutions}.
 Since 
 the existence of {\em physical  solutions} which exhibit anomalous dissipation in the general setting for $d \geq 3$ is a hard problem, the authors in \cite{BDL22} suggested to allow the body forces of the forced Navier--Stokes equations 
 $\{ F_{\nu} \}_{\nu >0}$ to depend on $\nu >0$. 
 This allows to exploit examples of anomalous dissipation for the advection-diffusion equation by studying the forced 2+$\frac{1}{2}$ Navier--Stokes equations. This framework was already used in \cite{JY21} to prove enhanced dissipation.
  Nevertheless, we want to rule out trivial examples of \eqref{e:zeroth-law}  given by solutions to the heat equation or the linear Stokes equations with forces depending on $\nu \geq 0$
 (see for instance \cite[Remark 1.2]{BCCDLS22}) in order to get the non linearity in \eqref{e:NSE} involved in the zeroth law of turbulence. 
 Hence a uniform in $\nu$ regularity must be imposed on the sequence of body forces $\{ F_\nu \}_{\nu > 0}$, following the proposal by \cite{BDL22}
 \begin{align} \label{e:regularity-forces-nu}
     \sup_{\nu > 0} \| F_{\nu } \|_{L^{1 + \sigma} ((0,1); C^{\sigma} (\T^d))} < \infty \, \quad \text{{for some $\sigma > 0$}}.
 \end{align}

 \begin{remark}
    As already noticed in \cite{BDL22}, if $v_{\nu} : [0,1] \times \T^d \to \R^d $ solves the linear Stokes equations  $\partial_t v_\nu - \nu \Delta v_{\nu} + \nabla p_\nu= F_{\nu}$ with divergence free initial datum $v_{\initial} \in L^2$, pressure $p_\nu : [0,1] \times \T^d \to \R$ solving $\Delta p_\nu = \diver F_\nu$, $\diver v_\nu =0$ and forces $\{F_\nu\}_{\nu \geq 0}$ which satisfies \eqref{e:regularity-forces-nu}, then the $\limsup$ in \eqref{e:zeroth-law} is 0. 
    Using energy estimates, we also notice that to rule out the
    anomalous dissipation 
    \eqref{e:zeroth-law} for the linear
    Stokes equations it is also sufficient that $F_\nu \to F_0$ in $L^1((0,1); L^2(\T^d))$ 
    as $\nu \to 0$.
\end{remark}

    We now coherently define {\em weak physical  solutions}, inspired by \cite{FL22}, and {\em anomalous dissipation measures}.

\begin{definition}[Weak physical solutions + anomalous dissipation measure] \label{d:weak-physical}
   The definition of {\em weak physical solutions} is the same as {\em physical solutions}  in Definition \ref{d:physical} replacing property \eqref{d:property-3} with: the forces $\{ F_\nu \}_{\nu >0}$ {satisfy} \eqref{e:regularity-forces-nu} or $F_\nu \to F_0$ in $L^1((0,1); L^2(\T^d))$.
     
     We say that $\mu \in \mathcal{M} ((0,1) \times \T^d)$ is an {\em anomalous dissipation measure} associated to $v$ if (up to not relabelled subsequences) we have 
     $$ \mu= \lim_{\nu \to 0} (\nu |\nabla v_\nu|^2 + \mathcal{D}[v_\nu]) \,,$$
     where the convergence is weak* in $\mathcal{M}((0,1) \times \T^4)$.
     We finally say that the push-forward $\mu_T = \pi_{\#} \mu \in \mathcal{M}((0,1))$ with respect to the projection in time map $\pi : (0,1) \times \T^d \to (0,1)$ is  the  corresponding {\em anomalous dissipation measure in time}.
\end{definition}

\begin{remark}
Notice that if $v$ is a {\em weak physical solution} then there exists an anomalous dissipation measure $\mu$ associated to $v$. Indeed, by energy estimates one can prove that 
$ \| v_\nu \|_{L^\infty_t L^2_x}^2 \leq \| v_{\initial , \nu}  \|_{L^2_x}^2 + 2 \|F_\nu \|_{L^1_t L^2_x} \| v_\nu \|_{L^\infty_t L^2_x}  $ and
therefore we have
$ \| v_\nu \|_{L^\infty_t L^2_x} \leq    \|F_\nu \|_{L^1_t L^2_x} +   ( \| v_{\initial , \nu}  \|_{L^2_x}^2 + \|F_\nu \|_{L^1_t L^2_x}^2 )^{1/2} $,
from which we conclude that
$$\sup_{\nu > 0} \| \mathcal{D}[v_\nu] \|_{TV} +  \| \nu |\nabla v_\nu|^2 \|_{TV} \leq  \sup_{\nu > 0} \| v_{\initial, \nu} \|_{L^2}^2 +  \left (  \|F_\nu \|_{L^1_t L^2_x} +   ( \| v_{\initial , \nu}  \|_{L^2_x}^2 + \|F_\nu \|_{L^1_t L^2_x}^2 )^{1/2} \right ) \|F_\nu \|_{L^1_t L^2_x}   < \infty \,. $$
The existence of an anomalous dissipation measure  $\mu \in \mathcal{M}((0,1) \times \T^d)$ is now straightforward  from the weak-* compactness in the space of measures up to subsequences.
\end{remark}
 
 In the very recent paper \cite{BDL22} the authors proved the existence of a {\em weak physical solution} $v \in L^\infty$ to the Euler equations with body force $F_0 \in L^\infty ((0,1); C^\alpha (\T^3))$ for any $\alpha <1$ such that up to subsequences  $\{ \nu_q \}_{q}$
 \begin{enumerate}
     \item \label{intro-1} $\| v_{\nu_q} - v \|_{L^3 (0,1) \times \T^3)} \to 0 $ as $\nu_q \to 0$,
     \item \label{intro-3}
     $\sup_{\nu_q} \| v_{\nu_q} \|_{L^\infty } < \infty \,,$
     \item \label{intro-2} $\| F_{\nu_q} - F_0 \|_{L^\infty ((0,1) ; C^\alpha (\T^3))}  \to 0$ as $\nu_q \to 0$.
 \end{enumerate}
 In \cite{BCCDLS22} the authors improved the result in the following way approaching the Onsager criticality: for any $\alpha <1/3$, there exists a {\em weak physical solution} such that \eqref{intro-1} holds, \eqref{intro-2}  holds with norm $\| \cdot \|_{L^{1+ \sigma} ((0,1); C^{\sigma} (\T^3))}$ for some $\sigma$ and \eqref{intro-3} holds with norm $\| \cdot \|_{L^3 ((0,1); C^\alpha (\T^3))}$.

 In both the results of \cite{BDL22,BCCDLS22} $  \nu |\nabla v_{\nu}|^2$ are weakly* converging in measure (up to not relabelled subsequences) to a measure which is singular in time, concentrated at $t=1$. In the following theorem we provide an example of a {\em weak physical solution} such that 
  \eqref{e:zeroth-law} holds and (up to not relabelled subsequences)
  $$\nu |\nabla v_{\nu}|^2 \rightharpoonup^* \mu $$
  weakly* in the sense of measures. In addition,  
  $\mu_T = \pi_{\#} \mu $ has a non-trivial absolutely continuous part with respect the Lebesgue measure $\mathcal{L}^1$ on $(0,1)$. 
   We remark that the absolute  continuity of the measure $ \pi_{\#} \mu $ is observed  both in experiments \cite{sree84,PKV02} and simulations \cite{KI03,sree98} of turbulent flows. More precisely at a mathematical level, assuming that the sequence $\{v_\nu \}_{\nu >0}$ has the following bound 
  $$ \sup_{\nu \in (0,1)} \| v_\nu \|_{L^3_t B^{1/3}_{3, \infty}} < \infty \,,
  $$
 which is in accordance with the Kolmogorov four-fifth law, we rigorously justify the absolute continuity of $\pi_\# \mu$ thanks to \cite[Theorem 1.6]{Isett13}. 
 Furthermore, we give positive answers to two open problems in fluid dynamics \cite[Question 2.2 and Question 2.3]{BDL22} related to the Navier-Stokes equations on the $4$-dimensional torus.

 \begin{maintheorem}  \label{t_Onsager}
Let $\beta \in (0, 1/4)$. For any $\alpha \in [0, 1)$ there exist a divergence-free initial datum $v_{in} \in L^\infty(\T^4; \R^4) $ with $\| v_{\initial} \|_{L^2 (\T^4)} =1$
and a time-independent force $F_0 \in C^{\alpha } (\T^4; \R^4)$  such that there exists a weak physical  solution $v_0 \in L^\infty$ of the $4d$ Euler equations with force $F_0$ such that 
\begin{equation}\label{eq:EnergyInEuler}
e(t) = \frac{1}{2} \int_{\T^4} | v_0 (t, x )|^2 dx
\end{equation}
is smooth in $[0,1]$, non-increasing, $e(1) < e(0)$ and  $\int_{\T^4} F_0 (x) \cdot v_0 (t,x) dx =0 $ for any $t \in (0,1)$.

More precisely there exists a sequence of {time-independent} forces $\{ F_\nu \}_{\nu \geq 0} \subset C^{\alpha } (  \T^4; \R^4)$ and initial data $\{ v_{\initial, \nu} \}_{\nu > 0} \subset C^\infty(\T^4; \R^4) $ such that $v_{ \initial , \nu} \to v_{\initial}$ in $L^2(\T^4)$, $F_{\nu} \to F_0$ in $C^{\alpha} (\T^4) $ as $\nu \to 0$ and for any $\nu > 0$ there exists a unique smooth solution $(v_{\nu}, p_\nu)$ to \eqref{e:NSE} with $v_\nu(0,\cdot) = v_{\initial , \nu}(\cdot)$ satisfying $\sup_{\nu \in [0,1]} \| v_\nu \|_{L^{\infty } ([0,1] \times \T^4)} < \infty$, 
the anomalous dissipation \eqref{e:zeroth-law} and 
 $(v_\nu , p_\nu) \weak (v_0, p_0)$ weakly$\ast$ {up to subsequences} in $L^\infty ((0,1) \times \T^3 )$.
 
 Furthermore, there exists an anomalous dissipation measure $\mu \in \mathcal{M}((0,1) \times \T^4)$  such that $\| \mu \|_{TV} \geq 1/4$ and  up to not relabelled subsequences we have
 \begin{align} \label{e:convergence-measure}
     \nu |\nabla v_\nu|^2 \rightharpoonup^* \mu
 \end{align} 
 in the sense of measures in $(0,1) \times \T^4$ and $\mu$ is close to $\mathcal{D}[v_0]$, given by formula \eqref{duchon-robert}, in $H^{-1}$
 \begin{align} \label{e:mu-duchon-robert}
     \| \mu - \mathcal{D}[v_0] \|_{H^{-1} ((0,1) \times \T^4)} \leq \beta \,.
 \end{align} 
The corresponding anomalous dissipation measure in time $\mu_T = \pi_{\#} \mu  \in \mathcal{M} (0,1)$ has a non-trivial absolutely continuous part w.r.t. the Lebesgue measure $\mathcal{L}^1$ and its singular part $\mu_{T, \text{sing}}$ is such that $\| \mu_{T, \text{sing}} \|_{TV} \leq \beta$.
\end{maintheorem}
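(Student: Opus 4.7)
The plan is to prove Theorem~\ref{t_Onsager} via a $3+\tfrac12$ dimensional lift of a new anomalous dissipation result for the advection--diffusion (AD) equation with autonomous divergence free 3d drift. Writing $\T^4 = \T^3_x \times \T^1_y$, I would look for approximating solutions of the form
\begin{equation*}
v_\nu(t,x,y) = \bigl(u(x),\,\theta_\nu(t,x)\bigr),
\end{equation*}
with $u\colon \T^3 \to \R^3$ smooth, divergence free and time-independent, and $\theta_\nu$ solving $\partial_t \theta_\nu + u\cdot\nabla\theta_\nu = \nu \Delta \theta_\nu$ with initial datum $\theta_{\initial,\nu}$. Since nothing depends on $y$, a direct component-by-component check shows $v_\nu$ is divergence free and solves \eqref{e:NSE} with $y$-independent pressure $p_\nu(x)=p^u(x)$ determined by $\Delta p^u = -\diver_x(u\cdot\nabla u)$, and time-independent, $x$-only force
\begin{equation*}
F_\nu(x) = \bigl(-\nu \Delta u + u\cdot\nabla u + \nabla p^u,\; 0\bigr).
\end{equation*}
Smoothness of $u$ gives $F_\nu \to F_0 := (u\cdot\nabla u + \nabla p^u,\,0)$ in every $C^k$ (hence in $C^\alpha$ for any $\alpha < 1$), and $\int_{\T^4} F_0 \cdot v_0 = 0$ by integration by parts since the nontrivial components of $F_0$ couple only to the divergence free $u$.

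The central step, and the main obstacle, is producing the autonomous AD building block: for given $\beta \in (0,1/4)$, construct $u$ as above and smooth initial data $\theta_{\initial,\nu} \to \theta_{\initial}$ in $L^2$ with $\|\theta_{\initial}\|_{L^2} \leq 1$ so that the unique smooth solutions satisfy $\sup_\nu \|\theta_\nu\|_{L^\infty_{t,x}} < \infty$, $\liminf_{\nu \downarrow 0}\nu \int_0^1 \int_{\T^3} |\nabla \theta_\nu|^2 \geq \tfrac14$, $\theta_\nu \weak \theta_0$ in $L^\infty_{t,x}$ with energy $e_\theta(t):=\tfrac12\|\theta_0(t)\|_{L^2}^2$ smooth and strictly decreasing on $[0,1]$, and such that the AD Duchon--Robert distribution $\mathcal{D}_{AD}[\theta_0]$ approximates the weak$*$ limit $\mu_\theta$ of $\nu|\nabla\theta_\nu|^2$ both in $H^{-1}_{t,x}$ and as a signed measure of total variation $\leq \beta$ (i.e.\ $\mu_\theta - \mathcal{D}_{AD}[\theta_0]$ has controlled TV). The hard point is that all existing quantitative AD constructions in the literature (Drivas--Elgindi--Iyer--Jeong, Armstrong--Vicol, Colombo--Crippa--Sorella, etc.) use nonautonomous, piecewise self-similar drifts; here one must trade a time variable for an extra space variable, typically by suspending a 2d nonautonomous self-similar AD construction under a constant shear $(0,0,1)$ in the new third coordinate so that the 3d drift becomes autonomous while the quantitative dissipation, $L^\infty$ bound, energy smoothness, and Duchon--Robert approximation all survive the suspension.

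With the AD block in hand, weak$*$ compactness gives $v_\nu \weak v_0 = (u,\theta_0)$ in $L^\infty$; uniform boundedness together with $F_\nu \to F_0$ implies $v_0$ is a weak solution of the 4d forced Euler equation. The energy profile
\begin{equation*}
e(t) = \tfrac12\|u\|_{L^2}^2 + e_\theta(t)
\end{equation*}
inherits smoothness, monotonicity and $e(1)<e(0)$ from $e_\theta$; a common rescaling of $u$ and $\theta_{\initial}$ enforces $\|v_{\initial}\|_{L^2}=1$. For each $\nu>0$, smoothness and uniqueness of $v_\nu$ are automatic because it is built explicitly from the smooth stationary $u$ and the smooth parabolic solution $\theta_\nu$.

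For the dissipation measure, split $\nu|\nabla v_\nu|^2 = \nu|\nabla u|^2 + \nu|\nabla_x \theta_\nu|^2$; the first term vanishes uniformly since $u$ is $\nu$-independent and smooth, so the weak$*$ limit $\mu$ is the lift of $\mu_\theta$ to $\T^4$ by the normalized Lebesgue measure on $\T^1_y$, giving $\|\mu\|_{TV} \geq 1/4$. Computing the 4d Duchon--Robert identity \eqref{duchon-robert} on $v_0=(u,\theta_0)$ directly, all terms involving only $u$ cancel using $F_0^u = u\cdot\nabla u + \nabla p^u$ and $\diver u=0$, leaving $\mathcal{D}[v_0] = -\partial_t\tfrac{|\theta_0|^2}{2} - u\cdot\nabla_x \tfrac{|\theta_0|^2}{2} = \mathcal{D}_{AD}[\theta_0]$; combined with the AD $H^{-1}$ approximation this yields \eqref{e:mu-duchon-robert}. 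Projecting in time and integrating in $x$ (using $\diver u = 0$ on the torus) gives $\pi_\#\mathcal{D}_{AD}[\theta_0] = -e_\theta'(t)\,dt$, while $\pi_\#(\mu_\theta - \mathcal{D}_{AD}[\theta_0])$ is a signed measure of total variation $\leq \beta$, so
\begin{equation*}
\mu_T = -e_\theta'(t)\,dt + \pi_\#\bigl(\mu_\theta - \mathcal{D}_{AD}[\theta_0]\bigr).
\end{equation*}
The first summand is the non-trivial absolutely continuous part (since $e_\theta$ is smooth and strictly decreasing), while the Lebesgue-singular part of $\mu_T$ is contained in the remainder, so $\|\mu_{T,\mathrm{sing}}\|_{TV} \leq \beta$, completing the proof.
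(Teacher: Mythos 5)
Your overall skeleton---a $3+\tfrac12$ dimensional ansatz $v_\nu(t,x,y,z,w) = (u,\theta_\nu)$ with $u$ an autonomous $3$d drift obtained by ``suspending'' a $2$d nonautonomous self-similar AD construction under the constant vertical flow $(0,0,1)$, then lifting the AD dissipation measure to a $4$d Navier--Stokes one---is exactly the route the paper takes, and your Duchon--Robert computation showing $\mathcal{D}[v_0]$ reduces to the AD defect of $\theta_0$ is also essentially correct (in fact the suspended $u$ is a shear flow on each slice $\{z=\text{const}\}$, so one may even take $p_\nu\equiv 0$ instead of a nontrivial $p^u$).

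There is, however, a genuine internal inconsistency: you assume $u\colon\T^3\to\R^3$ is \emph{smooth}, and from this deduce for free that $v_\nu$ is smooth and unique and that $F_\nu\to F_0$ in every $C^k$. But a smooth autonomous drift cannot exhibit anomalous dissipation for \eqref{e:ADV-DIFF}, and the suspension you correctly identify as the key AD step necessarily produces a field that is only $C^\alpha(\T^3)$: the singular time $t=1$ of the underlying $2$d construction becomes a singular slice $\{z=\tfrac12\}$ after suspension, so $u\in C^\infty_{\loc}(\T^2\times(0,1)\setminus\{\tfrac12\})\cap C^\alpha(\T^3)$ but is not smooth. With a merely $C^\alpha$ drift your ansatz breaks down on several fronts: $v_{\initial,\nu}=(u,\theta_{\initial,\nu})$ is no longer in $C^\infty(\T^4)$; the parabolic solution $\theta_\nu$ with drift $u$ is at best $C^{2,\alpha}$ in $x$, so $v_\nu$ is not the smooth solution the theorem demands; and $\nu\Delta u$ is not even a function, so $F_\nu$ fails to lie in (let alone converge in) $C^\alpha$. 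The paper's fix is to replace the fixed $u$ by a family of \emph{smooth $\nu$-dependent truncations} $u_\nu$ (cutting off the horizontal components above $z=\tfrac12-T_q$ when $\nu\in(\nu_{q+1},\nu_q]$), to let $\theta_\nu$ solve the AD equation with drift $u_\nu$ rather than $u$, and to set $v_\nu=(u_\nu,\theta_\nu)$ with $F_\nu=(\partial_z u_\nu^{(1,2)}-\nu\Delta u_\nu^{(1,2)},0,0)$; then $v_\nu$ is genuinely smooth, $v_{\initial,\nu}\in C^\infty$, and $F_\nu\to F_0$ in $C^\alpha$ (but no better) is proved from the explicit scaling estimates on $\partial_z u$ and $\Delta u$ from the construction. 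Without this regularization step the quantitative AD, stability and dissipation estimates must all be re-run against the truncated drift $u_\nu$ rather than $u$, which is a nontrivial additional layer you have not accounted for.
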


\begin{remark}
    We highlight the following facts:
    \begin{itemize}
        \item We prove \eqref{e:mu-duchon-robert} by showing that up to not relabelled subsequences $v_\nu$ is close to $v_0$ in $L^2((0,1) \times \T^4)$, namely $\| v_{\nu} - v_0 \|_{L^2}^2 \leq \beta$ and the uniform in $\nu>0$ $L^\infty$ bound on $v_\nu$. However,  the authors do not know if it is true that $v_\nu \to v_0$ in $L^2$ up to not relabelled subsequences.
        \item If one could show the $L^2 ((0,1) \times \T^4)$ strong convergence of $v_\nu$ to $v_0$ (up to not relabelled subsequences) in the previous theorem, then one would be able to show in the statement that  (up to not relabelled subsequences) the following distributional limit holds
        \begin{align} \label{e:kolmo-duchon-robert}
        \mathcal{D}[v_0]= \lim_{\nu \to 0} (\nu |\nabla v_\nu|^2 + \mathcal{D}[v_\nu])
        \end{align}
        thanks to the uniform in $\nu>0$ $L^\infty$ bound on  $v_\nu$, which implies the $L^3$ strong convergence of $v_\nu$ to $v_0$, because all the terms pass into the distributional limit in \eqref{duchon-robert}. Unfortunately, the $L^2$ strong convergence is a non-trivial property. Observe also that for any $\nu >0$ in our setting $\mathcal{D}[v_\nu] \equiv 0$ because the solution $v_\nu$ is smooth for any $\nu >0$.
        \item The examples in \cite{BDL22,BCCDLS22}  have the strong convergence property of $v_\nu$ to $v_0$ in $L^3$, however their measures $\mu$ defined by \eqref{e:convergence-measure} are concentrated at time $t=1$, therefore the distributional limit in $(0,1) \times \T^3$ \eqref{e:kolmo-duchon-robert} holds, but in this case since we test with compactly supported functions in the time interval $(0,1)$, the limiting Duchon-Robert distribution is such that $\mathcal{D}[v_0] \equiv 0$ in  $(0,1) \times \T^3$. Notice that we can extend those examples ``in a natural way'' to $(0,2) \times \T^3$, but then the proof in \cite{BDL22,BCCDLS22} of the strong convergence of $v_\nu$ to $v_0$ as $\nu \to 0$ in $L^3$ fails (which does not allow us to conclude the distributional limit \eqref{e:convergence-measure} in a straightforward way). 
      {     \item After this manuscript, the authors in \cite{JS24} show that any anomalous dissipation measure advected by an autonomous velocity field is absolutely continuous. By applying \cite[Proposition 4.3]{JS24}  in Step 6 of the proof of Theorem \ref{t_Onsager}, we believe  that it can be rigorously justified that $\mu_{T, sing} \equiv 0$.
     }
    \end{itemize}
\end{remark}

We remark that the distributional identity \eqref{e:kolmo-duchon-robert} implies that the Duchon-Robert distribution is a measure (under the condition that the vanishing viscosity sequence is a sequence of suitable Leray--Hopf solutions), but this identity is not always satisfied (see Proposition \ref{prop:duchon-anomalous} for an example), proving that in some cases the vanishing viscosity sequence of the $3d$ forced  Navier--Stokes equations \eqref{e:NSE} is not a good approximation in $L^p_{t,x}$ (for any $p \in [1, \infty]$) of the $3d$ forced Euler equations (see Corollary \ref{corollary:duchon-anomalous}).
This discussion motivates the following 
 open problem.
 
\begin{OQ} \label{OQ-physical}
Let $d \geq 3$. 
     Is there a (weak) {\em physical solution} $v_0$ to the $d$-dimensional Euler equations with force $F_0$ such that the distributional limit
    \begin{align*} 
        \mathcal{D}[v_0]= \lim_{\nu \to 0} (\nu |\nabla v_\nu|^2 + \mathcal{D}[v_\nu])
    \end{align*}
    holds and the Duchon--Robert distribution  $\mathcal{D}[v_0]$ (see \eqref{duchon-robert}) is not identically zero, proving in particular that in this case the Duchon--Robert distribution is a nontrivial {\em anomalous dissipation measure}. Furthermore, is it possible to have one of the following?
    \begin{itemize}
        \item $\mathcal{D}[v_0]$ is an absolutely continuous measure with respect to the Lebesgue measure $\mathcal{L}^1 \otimes \mathcal{L}^d$.
        \item The spatial dimension of $\supp  ( \mathcal{D}[v_0]) $ is $\gamma \in (d-1, d)$.
        We refer to  \cite[Definition 2.5 and Definition 2.8]{DRI22} for a mathematical definition of the spatial dimension of $\supp  \mathcal{D}[v_0]$.\footnote{The definition of the spatial dimension of the set $\supp ( \mathcal{D}[v_0])$ should be a mathematical definition coherent to the ``fractal dimension'' used in \cite{M74,M75} and described in \cite{FSN78} as  ``a measure of the extent to which the regions in which dissipation is concentrated fill space'', which can possibly differ from those proposed in \cite{DRI22} for the purpose of the open question.} {See also \cite{LDRTDMIPI25} where the relation between intermittency and lower dimensional dissipation is studied.}
    \end{itemize} 
\end{OQ}

To the authors knowledge, also finding a single example of the identity \eqref{e:kolmo-duchon-robert} (where the Duchon--Robert distribution is non-trivial) is an open problem, which is indeed one part of Open Question \ref{OQ-physical} (see \cite[Section B]{eyinksurvey} for a discussion about that identity).
The reason for which it is relevant to ask for $\mathcal{D}[v_0]$ to be absolutely continuous with respect to $\mathcal{L}^1 \otimes \mathcal{L}^d$ is the homogeneity assumption in \cite{kolmo}. The second point of Open Question \ref{OQ-physical} is motivated by the $\beta$-model for intermittent turbulent flows proposed by Frisch-Sulem-Nelkin in \cite{FSN78}, which predicts a correction of the so called structure function  in terms of the spatial dimension of $\supp  \mathcal{D}[v_0]$. In \cite{AGH84} the authors fit experimental data to predict that the spatial dimension of $\mathcal{D}[v_0]$ for turbulent flows is  close to $2.8 \in (2,3)$ in dimension $d=3$.

\begin{remark}
    Theorem \ref{t_Onsager}, in this theory, provides an example which is close to get the identity \eqref{e:kolmo-duchon-robert} thanks to \eqref{e:mu-duchon-robert} and  we believe (without a proof of it) that there exists $0< t_1 < t_2 < 1$ such that  $\supp (\mathcal{D}[v_0]) = [t_1, t_2] \times A \times \{ 1/2\} \times \T  \subset (0,1) \times \T^2 \times \T \times \T = (0,1) \times \T^4$, where $\mathcal{L}^2(A) >0$ which in particular implies that the dimension of $\supp (\mathcal{D}[v_0])$ is $4$ in the $5$-dimensional set $(0,1) \times \T^4$.  Indeed, notice that if we were able to prove the identity \eqref{e:kolmo-duchon-robert} proving in particular that the Duchon--Robert distribution is a measure, we could have applied \cite[Theorem 1.2]{DRDI22} to conclude that in our case the Hausdorff dimension of the Duchon--Robert distribution  is at least $4$  in $(0,1) \times \T^4$ (see also \cite{LS18,LS18-SIAM,S05} for related results). 
\end{remark}

Another interesting open problem is asking if one can get the absolutely continuous in time anomalous dissipation measure with the  optimal regularity extending Theorem \ref{t_Onsager}. 
 Notice that in \cite{BCCDLS22} the authors get the Onsager critical regularity $L^3_t C^{1/3 - \varepsilon}_x$ but the anomalous dissipation measure in that case is expected to be a Dirac delta at time $t=1$ (without a proof of it). 

\begin{OQ}
 Let $\varepsilon>0$, $d \geq 3$. Is there a (weak) {\em physical  solution} to the $d$-dimensional Euler equations with force $F_0$ such that the following hold: \eqref{e:kolmo-duchon-robert}, \eqref{e:zeroth-law}, the corresponding anomalous dissipation measure in time (see Definition \ref{d:weak-physical})
 is absolutely continuous w.r.t. the Lebesgue measure $\mathcal{L}^1$ and
    $$ \sup_{\nu \in (0,1)} \| v_\nu \|_{L^3_t C^{1/3- \varepsilon}} < \infty \,?$$
\end{OQ}

  The proof of Theorem \ref{t_Onsager} relies on the study of the advection--diffusion equation 
\begin{align} \label{e:ADV-DIFF}
\tag{ADV-DIFF}
\begin{cases}
\partial_t  \theta_\kappa +  u  \cdot \nabla  \theta_\kappa = \kappa \Delta  \theta_\kappa,
\\
 \theta_{\kappa} (0, \cdot ) = \theta_{\initial} (\cdot )\, . \notag
 \end{cases}
\end{align}
 where $u$ is a given divergence free autonomous velocity field $u : \T^3 \to \R^3$, $\theta_{\initial } : \T^3 \to \R$ is a given initial datum, $\kappa \geq 0$ is the diffusivity parameter and the unknown is $\theta_{\kappa} : [0,1] \times \T^3 \to \R$. In the case of $\kappa=0$ this equation is known as the advection equation. The basic physical example is the advection of the temperature through diffusive means. 
Our study is motivated by the Obukhov and Corrsin \cite{Obukhov,Corrsin} prediction of the same scaling as the Navier--Stokes equations in the K41 theory \cite{kolmo} for solutions of the advection-diffusion equation with a turbulent advecting flow in the idealized regime of small diffusivity $\kappa >0$.
 To the best of our knowledge, the first example of anomalous dissipation  
\begin{equation}\label{diss_main_OC} 
\limsup_{\kappa \to 0}  \, \kappa \int_0^1 \int_{\T^2} | \nabla \theta_\kappa|^2 \, dx\,dt >0 
\end{equation}
for the advection-diffusion equation is given  in \cite{Gautam}, where the authors proved that for any $\alpha \in [0,1)$ there exists a 2d ``mixing''  divergence free velocity field  of regularity $L^1_t C^{\alpha}_x$ for which the authors proved anomalous dissipation for some initial data in $H^2$. {Note that the authors of \cite{Gautam} prove \eqref{diss_main_OC} with $\limsup$ replaced by $\liminf$, a stronger result.} This result was later improved by \cite{CCS22}, where the authors constructed a new mixing velocity field, reminiscent of Depauw's example \cite{Depauw}, using alternating shear flows with super-exponentially decreasing and separated scales, for which they can get non selection of solutions by vanishing viscosity and anomalous dissipation in the full supercritical Obukhov--Corrsin regime. Precisely, for any $p \in [2, \infty]$ and $p^\circ $ such that $\frac{2}{p^\circ} + \frac{1}{p} =1$ and for any $\alpha, \beta $ such that $\alpha + 2 \beta <1$ the authors proved that there exists a 2d divergence free velocity field $u \in L^p_t C^{\alpha}_x$ such that there exists $\theta_{\initial} \in C^\infty$ and the unique solutions of the advection-diffusion equation satisfy $\sup_{\kappa \in (0,1)} \| \theta_{ \kappa} \|_{ L^{p^\circ}((0,1); C^\beta_x)} < \infty$\footnote{This regularity has been proved for any initial datum.} and exhibit anomalous dissipation.
In a  recent paper \cite{AV23} the authors,  based on ideas from quantitative homogenization, proved that for any $\alpha < 1/3$ there exists a 2d velocity field $u \in L^\infty_t C^\alpha_x$ such that the unique solutions of the advection-diffusion equation exhibit anomalous dissipation for any initial data in $H^1$, with the additional property that the velocity field has not a special ``singular'' time. In a very recent paper \cite{BBS23} the authors used  convex integration  and quantitative homogenization theory to prove that  anomalous dissipation for advection diffusion equation  holds for any $H^1$ initial data advected by a dense set of weak solutions to the 3d Euler equations with regularity $u \in C^{\alpha}_{t,x}$ (for $\alpha < 1/3$ fixed but arbitrary).
Moreover it is also stated in \cite{AV23}, that in a forthcoming paper \cite{ARV23} regularity of the solutions will be proved uniformly in $\kappa \in (0,1)$ in the supercritical Obukhov--Corrsin regime with $p=p^\circ = \infty$, $\alpha + 2 \beta <1$ and $\alpha < 1/3$. 
Even more recently, in \cite{EL23} the authors proved anomalous dissipation  for any initial datum in the full supercritical Obukhov--Corrsin regime with $p = \infty$ and $p^\circ =2$ with a completely different technique, based on ideas of balanced growth of Sobolev norms. 
 To clarify our result in this setting we recall the definition of the mean energy dissipation given in \cite[p. 20]{frisch}.
\begin{definition}
Let $d \in \N$ and $f \in L^2 ((0,1) \times \T^d)$ we define the mean energy dissipation  
$$\mathcal{D}_T [f] = -  \frac{1}{2} \frac{d}{dt} \int_{\T^d} |f(x,t)|^2 dx  \,,$$
which is a distribution $\mathcal{D}_T [f] \in \mathcal{D}' (0,1)$.
\end{definition}

We remark that there are several examples in the literature for the advection equation in which the mean energy dissipation is not identically zero and $L^1 (0,1)$ (see for instance \cite{Modena1,Modena2,Modena3,BDLC20,SG21,PS21,chesluo1,chesluo2,MB22}), relying on the convex integration technique, however none of these examples are proved to be vanishing diffusivity limits of a sequence $\{ \theta_\kappa \}_{\kappa >0}$ of (unique) solutions to \eqref{e:ADV-DIFF} which satisfies \eqref{diss_main_OC}.
 Inspired by \cite[Figure 3]{A78} and using a modification   of the velocity field constructed in \cite{CCS22}, we prove the following result.

 \begin{maintheorem}\label{t_anomalous_autonomous}
 Let  $\alpha\in [0,1)$ and $\beta >0$, then there exists an autonomous divergence-free velocity field $u \in C^\alpha (\T^3)$ and an initial datum $\theta_{\initial} \in C^\infty(\T^3)$ with $ \int_{\T^3} \theta_{\initial}=0$, $\| \theta_{\initial} \|_{L^2} =1$ such that the unique solutions $\theta_\kappa$ of the advection-diffusion equation \eqref{e:ADV-DIFF} with initial datum $\theta_{\initial}$  exhibit  anomalous  dissipation \eqref{diss_main_OC}
and there exists a measure $\mu_T \in \mathcal{M}(0,1)$ so that up to non-relabelled subsequences
\begin{equation} \label{eq:absolutely_continuous}
    \mathcal{D}_T [\theta_\kappa] \rightharpoonup \mu_T 
\end{equation}
weakly* in the sense of measures. In addition, $ \| \mu_{T}\|_{TV} \geq 1/4$,  the absolutely continuous part of the measure $\mu_T$ w.r.t. the Lebsegue measure $\mathcal{L}^1$ is non-trivial, its singular part $\mu_{T, \text{sing}}$ is such that $\| \mu_{T, \text{sing}} \|_{TV} \leq \beta$ and
\begin{align}\label{e:closeness-H-1}
    \| \mathcal{D}_T [\theta_0] - \mu_T \|_{H^{-1} (0,1)} \leq \beta \,,
\end{align} 
where $\theta_0$ is the unique {bounded} solution of \eqref{e:ADV-DIFF} with $\kappa =0$.

Furthermore, $\theta_\kappa \overset{*}{\rightharpoonup} \theta_0 $ weakly* in $L^\infty ((0,1) \times \T^3)$ and (up to not relabelled subsequences) we have the closeness $\| \theta_{\kappa} - \theta_0 \|_{L^\infty ((0,1); L^2(\T^3))} < \beta$,  and 
$$e(t) = \int_{\T^3} | \theta_0 (t, x )|^2 dx  $$ 
is smooth in  $[0,1]$  and it is such that $e(1) < e(0)$.
 \end{maintheorem}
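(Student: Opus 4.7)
The plan is to build the autonomous $3d$ field by suspension of the two-dimensional time-dependent mixing flow of \cite{CCS22}. Writing $x = (y, x_3) \in \T^2 \times \T$, one starts from a divergence-free (in $y$) field $w(x_3, y)$ obtained as a sum of alternating shears $(\phi_k(y_2), 0)$ and $(0, \phi_k(y_1))$ supported on disjoint intervals $I_k \subset \T$ of super-exponentially decreasing length and scale, with $x_3$ playing the role of the CCS22 time variable. After inserting a smooth buffer region in $x_3$ so that $w$ vanishes near, say, $x_3 = 0$, the field
\begin{equation*}
u(y, x_3) = \bigl(w(x_3, y),\, 1\bigr)
\end{equation*}
is autonomous, divergence-free on $\T^3$, and of class $C^\alpha$ by the same layer estimates as in \cite{CCS22}. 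The choice of constant $1$ in the third component, motivated by \cite[Figure 3]{A78}, ensures that particles drift uniformly through the shear pattern; this turns time translations into $x_3$ translations and is the decisive ingredient for spreading the dissipation in $t$.

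Next, I would take $\theta_{\initial}(y, x_3)$ essentially equal on each slice to the CCS22 initial datum (renormalised so that $\int \theta_{\initial} = 0$ and $\|\theta_{\initial}\|_{L^2(\T^3)}=1$). Since the characteristics of $u$ have $x_3$-component $x_3(t) = x_3^0 + t \pmod 1$, the inviscid equation $\partial_t \theta_0 + u \cdot \nabla \theta_0 = 0$ reduces, slice by slice in $x_3^0$, to a shifted copy of the CCS22 advection problem. Consequently, each slice undergoes its $L^2$-loss event at a time $t = t(x_3^0)$ that depends linearly on $x_3^0$. Integrating the slicewise CCS22 anomalous-dissipation bound over $x_3 \in \T$ by Fubini then yields that $\mu_T$ has a nontrivial absolutely continuous part with respect to $\mathcal{L}^1$ on an interval of positive length, with $\|\mu_T\|_{TV} \geq 1/4$ (from the total $L^2$ loss) and singular part of total mass $\leq \beta$ by choosing the CCS22 smallness parameters (buffer size and shear scales) small enough.

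To control the vanishing-diffusivity limit, one verifies slicewise uniqueness of $\theta_\kappa$ for $\kappa > 0$ and a $\kappa$-uniform closeness $\|\theta_\kappa - \theta_0\|_{L^\infty_t L^2_x} < \beta$ by transferring the CCS22 quantitative selection estimate to each $x_3^0$-slice. The inequality $\|\mathcal{D}_T[\theta_0] - \mu_T\|_{H^{-1}(0,1)} \leq \beta$ then follows from the definition $\mathcal{D}_T[f] = -\tfrac12 \tfrac{d}{dt}\|f\|_{L^2}^2$, since $\tfrac{d}{dt}: L^\infty(0,1) \to H^{-1}(0,1)$ is bounded. Smoothness of $e(t) = \|\theta_0(t,\cdot)\|_{L^2(\T^3)}^2$ comes from Fubini in $x_3$: the $x_3$-average of the slicewise energy profile is a convolution of a bounded function with the uniform measure on $\T$, hence smooth after a mild regularisation of the slice profile, and $e(1) < e(0)$ is automatic from the nontrivial slicewise dissipation.

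The main obstacle is to make the slicewise reduction rigorous for the viscous equation: the diffusion term $\kappa \Delta = \kappa(\Delta_y + \partial_{x_3}^2)$ couples the $x_3$-slices, and one must show that this coupling, which is transverse to the shear-induced mixing, neither destroys the slicewise CCS22 anomalous-dissipation estimate nor inflates the selection error. Concretely, the key step is a uniform-in-$\kappa$ bound on $\kappa \|\partial_{x_3}\theta_\kappa\|_{L^2_{t,x}}^2$ ensuring that the transverse diffusion contributes only $o(1)$ to the dissipation as $\kappa \to 0$, combined with a continuity-in-$x_3^0$ estimate that promotes the slicewise CCS22 control to a genuine statement on $\T^3$, and the verification that the buffer region does not introduce spurious cancellations of the $L^2$-loss along characteristics crossing $x_3 = 0$.
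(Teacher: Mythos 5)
Your high-level construction is exactly right and matches the paper's: suspend the CCS22 two-dimensional time-dependent field by making the CCS22 time variable into the third spatial coordinate and adding a unit drift in that coordinate, so that the constant drift spreads the dissipation events across the physical time axis. You also correctly anticipate the energy-profile smoothness from the $z$-translation structure and the $H^{-1}$ closeness from the $L^\infty_t L^2_x$ stability. But the central part of the argument — what you yourself flag as the main obstacle — is left unresolved, and the strategy you propose to close it is not what the paper does and is unlikely to work in that form.

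The genuine gap is the ``slicewise reduction''. There is no rigorous sense in which the viscous equation $\partial_t\theta_\kappa + u\cdot\nabla\theta_\kappa = \kappa(\Delta_{xy}+\partial_z^2)\theta_\kappa$ decouples into a family of 2D CCS22 problems indexed by $x_3^0$: the $xy$-Laplacian acts on every $z$-slice, not just the one following a given characteristic, and the $z$-Laplacian couples slices. Your two proposed ingredients — a uniform-in-$\kappa$ bound on $\kappa\|\partial_z\theta_\kappa\|_{L^2_{t,x}}^2$ and a ``continuity-in-$x_3^0$'' estimate — would at best show the transverse diffusion is lower order; they do not produce a decoupled family of 2D equations to which the CCS22 anomalous-dissipation estimate could be applied slice by slice. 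The paper avoids this altogether: the initial datum is first localized in $z$ by a smooth bump $\varphi_{\initial}$ supported in $(0,1/4)$ (and multiplied by the stationary solution $\theta_{\stat}$, so each slice of $\theta_{\initial}$ is the CCS22 field at the appropriate stage, not merely the CCS22 initial datum), and then decomposed by a \emph{discrete} partition of unity $\{\chi_j\}_{j=1}^{N_q}$ in $z$ at scale $a_q^\gamma$, yielding $\theta_{\initial}=\sum_j\theta_{\initial,j}$. Each piece $\theta_{\kappa_q,j}$ solves the genuine 3D advection-diffusion equation with small $z$-support; the 1D tail estimate in the $z$-variable (together with the constant drift $u^{(3)}\equiv 1$) shows the pieces remain essentially disjoint, and one proves that the $j$-th piece loses most of its $L^2$ norm on a short time interval $\mathcal{J}_{q,j}=[t_{q,j},t_{q,j}+\tilde t_q]$ via enhanced diffusion of the $a_q$-scale $xy$-profile after approximating $\theta_{\kappa_q,j}$ by a product solution. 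This keeps the whole argument in $\T^3$ and never appeals to a slicewise 2D problem.

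A second gap is that the Fubini heuristic for the absolutely continuous part of $\mu_T$ is only an intuition. What actually has to be proved is that $-\tfrac12 e_{\kappa_q}'(t)$ splits as $A_{\kappa_q}(t)+S_{\kappa_q}(t)$, where $A_{\kappa_q}(t)=\kappa_q\sum_j\|\nabla\tilde\theta_{\kappa_q,j}(t)\|_{L^2}^2\mathbbm{1}_{\mathcal{J}_{q,j}}(t)$ is \emph{uniformly bounded in $L^\infty$ in $q$} — a delicate scaling computation balancing $\kappa_q$, the number $\overline N_q(t)$ of overlapping intervals $\mathcal{J}_{q,j}$ at time $t$, and the $a_q$-scale of the $xy$-profile — and $S_{\kappa_q}$ is small in $L^1$. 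The latter requires controlling all the cross terms $\kappa_q\int\nabla\theta_{\kappa_q,i}\cdot\nabla\theta_{\kappa_q,j}$ for $i\neq j$, both adjacent and non-adjacent, which is the bulk of the paper's Step 6 and its Lemmas~\ref{lemma:EstimateOfQuantitiy-II}--\ref{lemma:EstimateOfQuatities-E6-E7}. Your sketch does not address these cross terms, and simply averaging the slicewise CCS22 estimate would not produce the needed $L^\infty$ bound on the main piece $A_{\kappa_q}$.
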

 
\begin{remark}
We remark that the property that $\mu_T$ has non-trivial absolutely continuous part in the previous theorem implies anomalous dissipation \eqref{diss_main_OC}. 
Indeed, for any $\theta_{\kappa}$ solution to \eqref{e:ADV-DIFF}, we have
\[
 \mathcal{D}_T [\theta_{\kappa}] (t) = \kappa \int_{\T^d} |\nabla \theta_{\kappa}(t,x)|^2 \, dx
\]
and up to nonrelabelled subsequences $\mathcal{D}_T [\theta_\kappa] \rightharpoonup \mu_T$.
Hence, due to lower semicontinuity on open sets under weak-* convergence in $\mathcal{M} ((0,1) \times \T^3)$, up to such a nonrelabelled subsequence we have
\[
\liminf_{\kappa \to 0} \kappa \int_{t_1}^{t_2} \int_{\T^d} |\nabla \theta_{\kappa}(t,x)|^2 \, dx dt \geq \mu_T ((t_1, t_2)) \quad \text{as $\kappa \to 0$ for all $0 \leq t_1 < t_2 \leq 1$}
\]
and since $\mu_T$ has nontrivial absolutely continuous part, we deduce \eqref{diss_main_OC}.
\end{remark}

In this context we raise the following open problem.

\begin{OQ}
    Is it possible to construct a divergence--free autonomous velocity field $u \in L^\infty(\T^2)$ (or better in $C^\alpha (\T^2)$ for $\alpha \geq 0$) and an initial datum $\theta_{\initial} \in L^\infty$ such that the unique solutions $\theta_\kappa$ of \eqref{e:ADV-DIFF} satisfy \eqref{diss_main_OC}?
\end{OQ}

\subsection*{Plan of the paper} In Section \ref{section:notation} we introduce the notations. Then, in Section \ref{sec:Construction} we construct the initial datum and an autonomous velocity field $u : \T^3 \to \R^3$ that will be used for the proofs of Theorem \ref{t_Onsager} and Theorem \ref{t_anomalous_autonomous}.  Subsequently, in Section \ref{sec:PrelimForThmA} we add some well known preliminaries that will be used in the proofs. In Section \ref{sec:proof-thB} we prove Theorem \ref{t_anomalous_autonomous} and in Section \ref{sec:4D-NS-Proof} we prove Theorem \ref{t_Onsager}. Finally, in Section \ref{sec:duchon-robert-anomalous} we point out some differences between the Duchon--Robert distribution and the anomalous dissipation measure (see Proposition \ref{prop:duchon-anomalous}) and an application (see Corollary \ref{corollary:duchon-anomalous}).

\subsection*{Acknowledgments} MS and CJ are supported by the SNSF Grant 182565 and by the Swiss State Secretariat for Education, Research and Innovation (SERI) under contract number MB22.00034.
The authors are grateful to Elia Bru\`e, Gianluca Crippa, Maria Colombo, Camillo De Lellis and Luigi De Rosa for fruitful discussions and useful remarks on the problem.

\subsection*{Data availability statement}
 Data sharing not applicable to this article as no datasets were generated or analysed during the current study.

\subsection*{Declaration regarding conflict of interest} On behalf of all authors, the corresponding author states that
there is no conflict of interest.

\section{Notation} \label{section:notation}
In this section, we explain the notation throughout the paper. First we want to make it clear that we use $x$, $y$, $z$, $w$ as spatial coordinates (up to 4 dimensions). Sometimes, however, we use $x$ as a point in $\T^3$ or $\T^4$, when there is no risk of confusion.
For vector-valued functions $v \colon \T^d \to \R^m$ ($m \geq 2$) we denote the the $j$-th component as $v^{(j)}$. When we are interested in the function built by considering just a selection of components, we adopt the following convention: Let $\{ j_1, \ldots, j_k \} \subset \{ 1, \ldots, m\}$ be distinct integers. We define $v^{(j_1, \ldots, j_k)} \colon \T^d \to \R^m$ as
\[
 v^{(j_1, \ldots, j_k)}(x) \coloneqq
 \begin{pmatrix}
 v^{(j_1)}(x) \\
 \vdots \\
 v^{(j_k)}(x) \\ 
 \end{pmatrix}.
\]

In a general dimension $d \in \N$, we denote the restriction of the gradient to the first two components as $\nabla_{x,y} = (\partial_x \,, \partial_y)$ and the restriction of the gradient to the first three components as $\nabla_{x,y,z} = (\partial_x \,, \partial_y \,, \partial_z)$.
Let $\{ A_q \}_{q \geq 1}$ and $\{ B_q \}_{q \geq 1}$ be two sequences. We write $A_q \lesssim B_q$ if there is a universal constant $C$ such that for all $q \geq 1$, we have $A_q \leq C B_q$. We write $A_q \lesssimlarge B_q$ if there are universal constants $C$ and $Q$ such that for all $q \geq Q$, $A_q \leq C B_q$. 
We use the notation $\mathcal{M} (X)$ for the space of Radon measure on a locally compact metric space $X$ and $\| \mu  \|_{TV}$ as the total variation of the measure $\mu \in \mathcal{M}(X)$ (see for instance \cite{rudin}).
{Let $\rho \colon B_1 \to \R$ be an arbitrary non-negative function satisfying $\rho \in C^\infty_c (B_1) $, $\int_{\T^d} \rho \, dx = 1$ and $\| \nabla \rho \|_{L^{\infty}(\T^d)} \leq 2$. For any $0< r <1$, we define $\rho_r \colon B_r \to \R$ by $\rho_r(x) = r^{-d} \rho(\sfrac{x}{r})$. We extend it to $\T^d$ by letting it take the value $0$ outside $B_r$. For any function $f \colon \T^d \to \R$, we define $(f)_{r} = f \star \rho_{r}$ i.e.
\[
 (f)_{r}(x) = (f \star \rho_{r})(x) = \int_{\T^d} f(y) \rho_{r}(x-y) \, dy.
\]}

\section{Construction of the vector field and initial datum}\label{sec:Construction}
The goal of this section is to construct the vector field $u$ and the initial datum ${\theta}_{in}$ in Theorem~\ref{t_anomalous_autonomous} as well as the initial datum $v_{\initial}$ and the forces $\{ F_{\nu} \}_{\nu \geq 0}$ in Theorem~\ref{t_Onsager}. The construction relies on a result from \cite{CCS22}. 
 In Subsection~\ref{subsec:ParmeterDefinitions}, we introduce all the parameters we will need. In Subsection~\ref{subsec:ConstructionVectorField}, we construct $u$ and  in Subsection~\ref{subsec:ConstructionInitialDatum}, we build ${\theta}_{in}$. Then, in Subsection~\ref{subsec:ConstructionsForNavierStokes4D}, we build $v_{\initial}$ and $\{ F_{\nu} \}_{\nu \geq 0}$ for Theorem~\ref{t_Onsager}.
\subsection{Choice of parameters}\label{subsec:ParmeterDefinitions}
We consider  $\alpha  < 1$ as in the statement of Theorem \ref{t_anomalous_autonomous}  and 
\begin{enumerate}
 \myitem{P1} $1 - \alpha(1 + \eps \delta)(1 + \delta) - \frac{\delta}{4} > 0$, \label{item:EpsDeltaOne}
 \smallskip
 \myitem{P2} $\eps \leq \frac{\delta^3}{200}$, \label{item:EpsDeltaThree}
\end{enumerate}
We notice  that the existence of $\eps$ and $\delta$  sufficiently small satisfying \eqref{item:EpsDeltaOne} follows from the condition $\alpha <1$ and up to consider $\eps = \eps (\delta) >0$ smaller we can {take} $\eps $ and $\delta$ such that \eqref{item:EpsDeltaOne}, \eqref{item:EpsDeltaThree} are satisfied simultaneously.

Given $a \in (0,1)$ such that $a^{\sfrac{\eps \delta}{8}} +  a^{{\eps \delta^2}}  \leq \frac{1}{20}$, we select $a_0 \leq a$ (to be determined depending on $\beta >0$ and universal constants) and define
\begin{align}\label{d:a_q-sequence}
a_{q + 1}  =  a_q^{1 + \delta}
\end{align}
 for all $q \geq 0$.
  The sequence  $\{ a_{q } \}_{q \in \N}$ should be selected in such a way that we can guarantee that $\sfrac{a_{q}}{a_{q+1}}$ is a multiple of 4. This affects the proof only with fixed numerical constants in the estimates and would make the reading more technical, therefore we avoid it.
We fix the parameter
\begin{align} \label{d:gamma}
\gamma =  \frac{\delta}{8}.
\end{align}
 
We fix an integer $m \geq 1$ such that
\begin{equation}\label{eq:InequalityForTheConstantM}
 m - 1 \geq \frac{16}{\delta^2}
\end{equation}
and define the sequence of times
\[
 \left\{
 \begin{array}{ll}
  t_q  =  \frac{1}{4} a_q^{\gamma} & \text{for all $q \in \N$}; \\
  \smallskip
  \overline{t}_q  = \frac{1}{4} a_q^{\gamma - \gamma \delta} & \text{for all $q \in m\N \setminus \{ 0 \}$}; \\
  \smallskip
  \overline{t}_q  =  0 & \text{for all $q \not\in m\N$}. \\
 \end{array}
 \right.
\]
Define the sequence $\{ T_q \}_{q = 0}^{\infty}$ as 
\[
 T_q  =  \sum_{j \geq q} \overline{t}_j + 3 \sum_{j \geq q} t_j < \frac{1}{4} \quad \forall q \geq 0.
\]
Define the intervals
\begin{align*}
 &\mathcal{I}_{-1}  =  (\sfrac{1}{4}, \sfrac{1}{2} - T_0]; \\
 &\mathcal{I}_{q,0}  =  (\sfrac{1}{2} - T_q, \sfrac{1}{2} - T_q + \overline{t}_q] \quad \forall q \geq 0;\\
 &\mathcal{I}_{q,i}  =  (\sfrac{1}{2} - T_q + \overline{t}_q + (i-1)t_q, \sfrac{1}{2} - T_q + \overline{t}_q + i t_q] \quad \forall q \geq 0, \, \forall i = 1,2,3;
\end{align*}
and 
$$ \mathcal{I}_q = \bigcup_{i=0}^3 \mathcal{I}_{q,i} \,, \qquad \mathcal{I} = \bigcup_{q =0}^{\infty}  \bigcup_{i=1}^3 \mathcal{I}_{q,i}$$
Finally, we define the sequences of diffusivity parameters $\{ \kappa_q \}_{q=0}^{\infty}$ and the sequence of viscosity parameters $\{ \nu_q \}_{q=0}^{\infty}$ as
\begin{equation} \label{d:k_q}
 \kappa_q = \nu_q  =  a_q^{2 - \frac{\gamma}{1 + \delta} + 10 \eps}\,.
\end{equation}
 
  \subsection{Construction of the vector field}\label{subsec:ConstructionVectorField}
In this section we prove the existence of a velocity field, relying on \cite{CCS22}, with all the properties needed to prove Theorem \ref{t_anomalous_autonomous} and Theorem \ref{t_Onsager}. We recall that $\T^3 \cong \R^3 / \Z^3$ and we use the convenient notation of identifying the subsets $A \subset \T^3$ as $A \subset [0,1)^3$.

\begin{definition}
    We  say that $\theta: \T^2 \times [0,\infty) \to \R$ is a bounded stationary solution of the advection equation with a divergence free velocity field $u \in L^\infty ( \T^2 \times (0, \infty) )$ if 
    $$ \int_{\T^2 \times [0, \infty )} (u \theta) \cdot \nabla \varphi =0 \qquad \text{for any } \varphi \in C^\infty_c (  \T^2 \times (0,\infty))\,.$$
\end{definition}

 \begin{proposition}\label{prop:ResultFromPreviousPaper}
 \emph{(Existence of a vector field)} There exists an autonomous divergence-free velocity field ${u} \in C^{\infty}_{\text{loc}}( \T^2 \times (0,1) \setminus \{ \sfrac{1}{2} \} ) \cap C^\alpha (\T^3)$ with the following estimates:
 for any $k \in \N$ and $j \in \N$ there exists a constant $C>0$ such that
 \begin{align} \label{prop:estimate_u}
 \| \partial^k_z \nabla_{x,y}^j  u \|_{L^\infty (\T^2 \times \mathcal{I}_q)} \leq C a_q^{1- \gamma} a_q^{- k \gamma} a_{q+1}^{- j (1+ \varepsilon \delta ) } \,, \quad \supp \{ u^{(1,2)} \} \subset  \mathcal{I} \times \T^3
 \end{align}

Furthermore, extending $u$ to $\T^2 \times [0, \infty)$ 1-periodically with respect to the third variable, there exists  a bounded stationary solution $ {\theta}_{\stat}:  \T^2 \times [0,\infty) \to \R$ to the advection equation  with the following properties:
 \begin{enumerate}
 \myitem{S1} there exists a bounded function with zero-average  $\theta_{\chess} \colon \T^2 \to \R$ such that \\ $\| {\theta}_{\stat} (\cdot, \cdot , z)  - \theta_{\chess}((2a_q)^{-1} \cdot, (2a_q)^{-1} \cdot ) \|_{L^2(\T^2)}^2 < 40 a_0^{\eps \delta}$ 
  for all $z \in \mathcal{I}_{q,0} \cup \mathcal{I}_{q,1}$ for all $q \in \N$; \label{eq:SolutionTransportEquationProp1}
 \myitem{S2} $\| {\theta}_{\stat} (\cdot, \cdot , z) \|_{L^2(\T^2)}^2 = \| {\theta}_{\stat} (\cdot, \cdot , \sfrac{1}{4}) \|_{L^2(\T^2)}^2 > (1 - 40 a_0^{\eps \delta})$ for all $z \in [0, \sfrac{1}{2} - T_0)$; \label{eq:SolutionTransportEquationProp2} 
 \myitem{S3} there exists a constant $C>0$ such that for any $k, j \in \{0,1 \}$ we have 
 \begin{align} \label{eq:vartheta_stationary}
 \| \partial_z^{k} \nabla_{x,y}^{j} \theta_{\stat} \|_{ L^\infty (\T^2 \times \mathcal{I}_q)} \leq  C a_q^{- k \gamma}  a_{q+1}^{j(-1 - 3 \eps (1 + \delta))} \qquad \text{ for all } q \in \N\,.
 \end{align}
 \label{eq:SolutionTransportEquationProp3}
 \myitem{S4} \label{item:dissipation-theta_0}$\| \theta_{\stat}(\cdot, \cdot, z) \|_{L^2(\T^2)}^2 \leq a_0^{\sfrac{\eps \delta}{2}}$ for all $z > \sfrac{1}{2}$.
 \end{enumerate}
 \end{proposition}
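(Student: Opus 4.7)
The plan is to reinterpret the time-dependent $2$D advection-diffusion construction of \cite{CCS22} as an autonomous $3$D one by promoting the time variable $t$ to the third spatial variable $z$. Starting from a CCS22-type time-dependent shear field $\tilde u(t, x, y)$ on $\T^2 \times [0,1]$ and its associated mixing solution $\tilde\theta(t, x, y)$ satisfying $\partial_t \tilde\theta + \tilde u \cdot \nabla_{x,y} \tilde\theta = 0$, set
\[
 u(x, y, z) := \bigl(\tilde u^{(1)}(z, x, y),\; \tilde u^{(2)}(z, x, y),\; 1\bigr), \qquad \theta_{\stat}(x, y, z) := \tilde\theta(z, x, y).
\]
Then $u$ is autonomous and divergence-free on $\T^3$ (horizontal components are divergence-free in $(x,y)$, and $u^{(3)}$ is constant), and the pointwise identity
\[
 u \cdot \nabla \theta_{\stat} \;=\; \tilde u \cdot \nabla_{x,y}\tilde\theta\bigr|_{t=z} + \partial_t \tilde\theta\bigr|_{t=z} \;=\; 0
\]
is just the CCS22 transport equation rewritten in the variable $z$.

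\textbf{Construction and verification.} First, I would build $\tilde u$ as in \cite{CCS22}: a sequence of smooth alternating shears at spatial scale $a_{q+1}$ and amplitude $\sim a_q^{1-\gamma}$, active on $\mathcal{I}_{q,1}\cup\mathcal{I}_{q,2}\cup\mathcal{I}_{q,3}$ and vanishing on $\mathcal{I}_{q,0}$ and on $[0,1]\setminus[\sfrac{1}{2}-T_0,\sfrac{1}{2}]$. This directly gives the support condition $\supp\{u^{(1,2)}\}\subset\mathcal{I}\times\T^2$, the smoothness of $u$ off $z=\sfrac{1}{2}$, and the derivative bounds in \eqref{prop:estimate_u} (for the $L^\infty$ bound at $k=j=0$ only the horizontal components matter, while $u^{(3)}\equiv 1$ contributes nothing to the higher-order derivatives). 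Next, I would take $\tilde\theta$ to be the bounded (vanishing-diffusivity limit) CCS22 solution with chessboard initial datum of scale $a_0$ and $L^2$-norm $1$, set $\theta_{\stat}(x,y,z):=\tilde\theta(z,x,y)$ on $\T^2\times[0,1]$, and extend to $\T^2\times[0,\infty)$ by iterating the same construction on each subsequent period of the $1$-periodic extension of $u$; since $\|\tilde\theta(1,\cdot)\|_{L^2}$ is already small by the CCS22 dissipation estimate, the $L^2$-norm stays small on every subsequent period. Properties \eqref{eq:SolutionTransportEquationProp1}--\eqref{item:dissipation-theta_0} then follow directly from the corresponding CCS22 estimates for $\tilde\theta$: chessboard structure at scale $a_q$ during the $q$-th mixing stages (S1), $L^2$-invariance before the first shear acts (S2), derivative scaling of $\tilde\theta$ on $\mathcal{I}_q$ (S3), and near-total dissipation by $t=\sfrac{1}{2}$ (S4).

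\textbf{Main obstacle.} The key technical point is obtaining global $C^\alpha(\T^3)$ regularity of $u$ across the accumulation point $z=\sfrac{1}{2}$, where infinitely many shears at scales $a_q\to 0$ concentrate. Interpolating the $L^\infty$ bound with the $\nabla_{x,y}$ and $\partial_z$ bounds of \eqref{prop:estimate_u} yields
\[
 [u]_{C^\alpha(\T^2\times\mathcal{I}_q)} \;\lesssim\; a_q^{1-\gamma-\alpha(1+\delta)(1+\eps\delta)},
\]
which is uniformly bounded in $q$ precisely under condition \eqref{item:EpsDeltaOne} together with $\gamma=\sfrac{\delta}{8}$; condition \eqref{item:EpsDeltaThree} then governs the overlap between neighbouring intervals and patches these local H\"older seminorms into global $C^\alpha$ regularity on $\T^3$. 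A secondary subtlety is that $\tilde\theta$ is only a weak (vanishing-diffusivity limit) solution of the transport equation and loses $L^2$ mass at $t=\sfrac{1}{2}$, so $u\cdot\nabla\theta_{\stat}=0$ must be interpreted distributionally; this is justified by the $L^\infty$ bounds on $u$ and $\theta_{\stat}$, which make both products in the equation well-defined distributions.
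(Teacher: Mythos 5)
Your proposal takes essentially the same approach as the paper: promote the CCS22 time variable to the third spatial coordinate $z$, set $u^{(3)} \equiv 1$, let $\theta_{\stat}(x,y,z) := \tilde\theta(z,x,y)$, and transfer the CCS22 estimates verbatim. The paper implements the extension slightly more explicitly (zero-extending $\tilde u$ to $[-1,3]\times\T^2$, making it $4$-periodic in time, then rescaling via $\overline{u}(t,x) = 4\,\tilde u(4t-1,x)$ before reading off $u$ and $\theta_{\stat}$), and it cites CCS22 directly for the global $C^\alpha$ regularity rather than redoing the interpolation argument you sketch as the ``main obstacle.'' One small correction: you frame $\tilde\theta$ as a vanishing-diffusivity limit and hence worry that the transport equation only holds distributionally in a delicate sense. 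The paper instead observes that $\overline{u}$ is smooth on $[0,1/2)\times\T^2$, vanishes on $(1/2,1]$, and is bounded and $1$-periodic, so forward integral curves are unique; by the superposition principle, bounded solutions of the $2$D advection equation are unique, and $\tilde\theta$ is simply that unique bounded solution, making $\theta_{\stat}$ a well-defined bounded stationary distributional solution of the autonomous $3$D advection equation without any additional limiting argument. Your approach would also work, but the paper's is cleaner and does not require passing through the $\kappa\to 0$ limit.
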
 

 \begin{proof}
 Let $\tilde u \in C_{loc}^{\infty}([0,1)  \times \T^2) \cap {C^{\alpha}([0,1] \times \T^2)}$ be the vector field constructed in \cite[Section 4.2]{CCS22} with the choice of parameters $\alpha \in (0,1)$ as in \eqref{item:EpsDeltaOne}, $\varepsilon , \delta $ as in \eqref{item:EpsDeltaThree} and $\beta =0$. We recall from \cite[Section 4.2]{CCS22} that $\tilde{u} (0, \cdot) \equiv \tilde{u} (1, \cdot) \equiv 0$. Then, we extend this velocity field $\tilde u $ in $[-1,3] \times \T^2$ defining $\tilde u (t, \cdot) \equiv 0$ for any $t \in (1, 3) \cup (-1, 0)$. It is now straightforward to check that  $\tilde u \in C^\alpha ([-1,3] \times \T^2 ;\R^2) \cap C^\infty_{\text{loc}} (([-1,3] \setminus \{ 1\}) \times \T^2 ; \R^2)$ { since $\tilde{u}$ restricted to $[0,1] \times \T^2$ is smooth around $t = 0$ and $C^{\alpha}$ around $t = 1$.} Extend this vector field to $[-1, \infty) \times \T^2$ so that it is 4-periodic in time. We now rescale and translate  the velocity field $\tilde u$ in time defining 
 $$  \overline{u} (t,x) = 4 \tilde u (4t -1, x) \qquad \text{ for all } t \in (0, \infty ) \, , x \in \T^2 \,.$$

Finally, we define
 the autonomous velocity field $u \in  C^{\infty}_{\text{loc}}((0,1) \setminus \{ \sfrac{1}{2} \} \times \T^2; \R^3)  \cap C^\alpha (\T^3 ;  \R^3)   $ as
 $$u (x,y,z) = \left( \begin{array}{c}
\overline{u}^{(1)} (z, x, y) \\
 \overline{u}^{(2)} (z, x, y) \\
 1
 \end{array} \right)  \qquad \text{ for all } (x,y,z ) \in \T^3  $$
 which verifies the estimate \eqref{prop:estimate_u} thanks to  \cite[Equations (4.20a), (4.20b), (4.20c), (4.22)]{CCS22}.
 We observe that $\overline{u}$ is bounded and 1-periodic in time, $\overline{u} \in C^\infty ([0,1/2) \times \T^2)$ and $\overline u (t, \cdot) \equiv 0$ for $t \in (1/2,1]$, therefore the forward integral curves of $\overline{u}$ are unique. Using the superposition principle (see \cite{A08}) we have that the bounded solutions of the advection equation are unique.

 Now, let $\overline{\theta} : [0, \infty) \times \T^2$ be the solution to the advection equation with velocity field $\overline{u}$ and initial datum $\overline{ \theta}_{\initial} (\cdot)= \theta_{\text{chess}, 0} \star \psi_0 (\cdot)$, where $\psi_0 (\cdot) = \psi ((2a_0)^{-1 - \epsilon \delta} \cdot)$ for a smooth and periodic function $\psi \in C^\infty (\T^2)$ and $\theta_{\text{chess}, 0}  (\cdot) = \theta_{\text{chess}} ((2a_0)^{-1} \cdot) $ and the chessboard function $\theta_{\text{chess}} \in L^\infty (\T^2) $ is defined as follows
 $$ \theta_{\text{chess}} (x,y) =\begin{cases}
1 \qquad \ \  x,y \in [0,1/2) \text{ or } x,y \in [1/2, 1);
\\
-1 \qquad \text{otherwise.}
\end{cases}$$ 
 
 We define $\theta_{\stat} \in L^\infty ((0, \infty) \times (\T^2 \times [0,\infty)) )$ as $\theta_{\stat} (t, x, y , z) = \overline{\theta} (z, x, y)$. It can be verified that $\theta_{\stat}$  is a (time-independent) distributional solution of the advection equation with velocity field $u : \T^2 \times [0, \infty ) \to \R^3$ and initial datum $\theta_{\stat}$.

 We observe that \eqref{eq:SolutionTransportEquationProp1} follows from \cite[Equations (4.18), (4.19), (4.25a) and (4.25b)]{CCS22} and the estimate
$$ \sum_{q=0}^\infty 20 a_q^{\epsilon \delta} \leq  20 \sum_{q=0}^\infty a_0^{\epsilon \delta + \epsilon \delta^2 q} \leq 40 a_0^{\epsilon \delta},$$
where the last inequality follows from the choice of $a_0$.
 Property \eqref{eq:SolutionTransportEquationProp2} follows from the choice of initial datum and 
 property \eqref{eq:SolutionTransportEquationProp3} is a consequence of \cite[Section 8]{CCS22}. Property \eqref{item:dissipation-theta_0} directly follows from Step 2 in \cite[Section 7]{CCS22}.
 \end{proof}

\subsection{Mollification of $\theta_{\chess}$} \label{rmk:AboutAMollifiedVersionOfTheChessFunction}
Since the function $\theta_{\chess}$ defined in Proposition \ref{prop:ResultFromPreviousPaper} is not smooth, we introduce a mollification of it, for future use.
We will consider the mollified function $(\theta_{\text{chess}})_{a_0^{\eps \delta}} = \theta_{\text{chess}} \star \rho_{a_0^{\eps \delta}} \colon \T^2 \to \R$.
 Notice that 
 \begin{equation*}
 \|   \theta_{\text{chess}} - (\theta_{\text{chess}})_{a_0^{\eps \delta}} \|_{L^2(\T^2)} \lesssim  {a_0^{\frac{\eps \delta}{2}}},
 \end{equation*}
 In virtue of the previous proposition, this means that
 \begin{equation}\label{eq:L2DifferenceBetweenChessboardAndItsMollification}
 \| {\theta}_{\stat} (\cdot, \cdot , z)  - (\theta_{\text{chess}})_{a_0^{\eps \delta}}({(2 a_q)^{-1}} \cdot, {(2 a_q)^{-1}} \cdot) \|_{L^2(\T^2)}^2 \lesssim  a_0^{\eps \delta}
  \text{ for all $z \in \mathcal{I}_{q,0} \cup \mathcal{I}_{q,1}$ for all $q \in \N$. }
 \end{equation}
 We also point out that
 \begin{equation}\label{eq:LInftyNormOfTheGradientOfTheMollifiedChessFunction}
  \| \nabla (\theta_{\text{chess}})_{a_0^{\eps \delta}} \|_{L^{\infty}(\T^2)} \leq {2 a_0^{- \eps \delta}}.
 \end{equation}
 
 \subsection{Construction of the initial datum ${\theta}_{in}$ and $\theta_0$ solution to \eqref{e:ADV-DIFF} with $\kappa =0$}\label{subsec:ConstructionInitialDatum}
 Let $\varphi_{\initial} \in C^{\infty}(\R)$ be such that
 \[
  \supp(\varphi_{\initial}) \subset (0,\sfrac{1}{4}), \quad 0 \leq \varphi_{in} \leq 5, \quad \| \varphi_{in} \|_{L^2(\R)} \in (1,2)
 \]
 in such a way that 
 the initial datum defined as
\begin{align}\label{eq:initialdatum_theta}
\theta_{\initial} (x,y,z) = \theta_{\stat} (x,y,z) \varphi_{\initial} (z),
\end{align}
where $\theta_{\stat}$ is the function given by Proposition \ref{prop:ResultFromPreviousPaper}, is such that  
 \begin{equation}\label{eq:L2NormOfTheInitialDatum}
  \| {\theta}_{\initial} \|_{L^2(\T^3)} =1 \,,
 \end{equation}
where we used \eqref{eq:SolutionTransportEquationProp2} of Proposition \ref{prop:ResultFromPreviousPaper}.

We notice that for all $t \in (0, 1/2)$ we have that ${\theta}_0(t,x,y,z) = \theta_{\stat}(x,y,z) \varphi_{\initial}(z-t)$ is a solution to the advection equation with initial datum $\theta_{\stat}(x,y,z) \varphi_{\initial}(z)$ and velocity field $u$. This will be used in the proof of Theorem~\ref{t_Onsager} and \ref{t_anomalous_autonomous}.

\subsection{Construction of the initial datum and the forces for Theorem~\ref{t_Onsager}}\label{subsec:ConstructionsForNavierStokes4D}
In this subsection, we define the objects relevant for the proof of Theorem~\ref{t_Onsager}. We begin by defining $u_0 = u$. 
For any $\nu \in ( \nu_{q+1},   \nu_q]$, we define 
\begin{align} \label{u_nu}
 u_{\nu} =
 \begin{pmatrix}
  u^{(1)} \mathbbm{1}_{\{ z < \sfrac{1}{2} - T_{q} \}} \\
  u^{(2)} \mathbbm{1}_{\{ z < \sfrac{1}{2} - T_{q} \}} \\
  u^{(3)} \\
 \end{pmatrix}
 =
 \begin{pmatrix}
  u^{(1)} \mathbbm{1}_{\{ z < \sfrac{1}{2} - T_{q} + \sfrac{a_q^{\gamma}}{4} \}} \\
  u^{(2)} \mathbbm{1}_{\{ z < \sfrac{1}{2} - T_{q} + \sfrac{a_q^{\gamma}}{4} \}} \\
  1 \\
 \end{pmatrix}.
\end{align}
and  $\theta_{\nu} \colon [0,1] \times \T^3 \to \R$ be the solution to the advection-diffusion equation
 \begin{equation}\label{eq:Special3DAdvDiff}
 \left\{
 \begin{array}{ll}
 \partial_t {\theta}_{\nu}+ u_{\nu} \cdot \nabla {\theta}_{\nu} = \nu \Delta {\theta}_{\nu}; \\
 {\theta}_{\nu} (0,x,y,z) = {\theta}_{\initial}(x,y,z). \\
 \end{array}
 \right.
\end{equation}
This is a slight abuse of notation since we have already defined $\theta_{\kappa}$ as solution to the advection-diffusion equation \eqref{e:ADV-DIFF} at the beginning of the paper. For this reason, we introduce the following convention: Whenever $\theta$ has a subscript with letter $\kappa$, it is a solution to \eqref{e:ADV-DIFF}, whereas if it has a subscript with letter $\nu$ it is a solution to \eqref{eq:Special3DAdvDiff}.
Now define $v_{\initial} \colon \T^4 \to \R^4$ for Theorem~\ref{t_Onsager} as
\begin{equation} \label{v-initial-nu}
 v_{\initial, \nu}(x,y,z,w)  = 
 \begin{pmatrix}
  u_{\nu}(x,y,z) \\
  \theta_{\initial}(x,y,z)
 \end{pmatrix}
 =
  \begin{pmatrix}
  u_{\nu}^{(1)}(x,y,z) \\
  u_{\nu}^{(2)}(x,y,z) \\
  u_{\nu}^{(3)}(x,y,z) \\
  \theta_{\initial}(x,y,z)
 \end{pmatrix}.
\end{equation}
Finally, we define the forces $\{ F_{\nu} \}_{\nu \geq 0}$ in Theorem~\ref{t_Onsager} as
\begin{equation}\label{eq:DefOfForces4d}
 F_{\nu}(x,y,z,w)  = 
  \begin{pmatrix}
  \partial_z u_{\nu}^{(1)} - \nu \Delta u_{\nu}^{(1)} \\
  \partial_z u_{\nu}^{(2)} - \nu \Delta u_{\nu}^{(2)} \\
  \partial_z u_{\nu}^{(3)} - \nu \Delta u_{\nu}^{(3)} \\
  0
 \end{pmatrix}
 =
   \begin{pmatrix}
  \partial_z u_{\nu}^{(1)} - \nu \Delta u_{\nu}^{(1)} \\
  \partial_z u_{\nu}^{(2)} - \nu \Delta u_{\nu}^{(2)} \\
  0 \\
  0
 \end{pmatrix}.
\end{equation}

\begin{lemma}\label{lemma:AboutTheCollectionOfBodyForces}
 The collection $\{ F_\nu \}_{\nu \in (0,1)}$ is uniformly bounded in $C^{\alpha}(\T^4;\R^4)$ and $F_{\nu} \to F_0$ in $C^{\alpha}(\T^4;\R^4)$.
\end{lemma}

\begin{proof}
{It suffices to prove that
\begin{equation}\label{e:regforce2}
\| \partial_z u^{(1,2)} \|_{C^{\alpha }(\{z \geq 1/2 - T_q + \sfrac{a_q^\gamma}{4} \} )} \to 0 
\qquad\text{ and } \qquad
\sup_{\nu \in ( \nu_{q+1},   \nu_q]} \| \nu \Delta u_{\nu} \|_{C^\alpha(\T^4  )} \to 0  \,,
\end{equation} 
as $q \to \infty$.
Indeed, this implies $F_{\nu} \to F_0$ in $C^{\alpha}(\T^4;\R^4)$ with $F_0$ given by Equation~\eqref{eq:DefOfForces4d} with $\nu = 0$.}
We estimate the first term thanks to \eqref{prop:estimate_u}  and the interpolation inequality 
 \begin{align*}
 \| \partial_z u \|_{C^{\alpha } (\{z \geq 1/2 - T_q + \sfrac{a_q^\gamma}{4} \} )} & \lesssim  a_q^{1- 2 \gamma} a_{q+1}^{- \alpha (1+ \varepsilon \delta) } = a_q^{1- 2 \gamma - \alpha  (1 + \delta )(1+ \varepsilon \delta)} \to 0
 \end{align*} 
 as $q \to \infty$, where in the last limit we used $\gamma = \delta /8$ and \eqref{item:EpsDeltaOne}. We now show the second property in~\eqref{e:regforce2} using $\nu \in (  \nu_{q+1},  \nu_q]$ and \eqref{prop:estimate_u}
 \begin{align*}
 \| \nu \Delta u_\nu \|_{C^{\alpha}(\T^4)} & \leq   \nu_q  \| \Delta u  \|_{C^{\alpha } ( \{ z < 1/2 - T_q + \sfrac{a_q^\gamma}{4} )} 
 \lesssim  a_q^{2 - \frac{\gamma}{1 + \delta} + 10 \varepsilon}  a_{q-1}^{1- \gamma} a_{q}^{-2 -2 \varepsilon \delta} a_{q}^{- \alpha  (1+ \epsilon \delta)}
 \to 0
 \end{align*}
 as $q \to \infty$, in the last we used that 
 $a_q^{2 \epsilon} a_q^{- 2 \epsilon \delta } \leq 1$ and \eqref{item:EpsDeltaOne}.

\end{proof}
 
 \section{Preliminaries }\label{sec:PrelimForThmA}
 In this section, we present some preliminary lemmas that will be used in the proofs of Theorem~\ref{t_anomalous_autonomous} and \ref{t_Onsager}.
 
\subsection{Existence and uniqueness for advection-diffusion}
\begin{lemma} \label{lemma:local-energy}
    Let $d \geq 2$, $u : [0,1] \times \T^d \to \R^d$ be a divergence free velocity field such that $u \in L^\infty((0,1); L^2 (\T^3))$ and a bounded initial datum $\theta_{\initial} \in L^\infty (\T^d)${. Then} for any $\kappa >0$ there exists a unique solution $\theta_\kappa : [0,1] \times \T^d \to \R$ to the advection-diffusion \eqref{e:ADV-DIFF} such that
    \begin{itemize}
        \item $\theta_\kappa \in L^\infty((0,1) \times \T^3) \cap L^2 ((0,1) ; H^1(\T^d))$,
        \item the local energy equality holds in the sense of distribution 
        $$ \frac{1}{2} \partial_t |\theta_\kappa |^2 +  u \cdot \nabla \theta_\kappa \theta_{\kappa} + \kappa | \nabla \theta_\kappa |^2 = \frac{\kappa}{2} \Delta |\theta_\kappa|^2 \,. $$
        \item the global energy equality holds for all $s \in [0,1]$ for all $t >s $
        \begin{equation} \label{e:energy-equality-global}
           \frac{1}{2} \int_{\T^d} |\theta_\kappa |^2 (t,x ) dx + \kappa  \int_s^t \int_{\T^d}  | \nabla \theta_\kappa |^2(u,x) \, dx du = \frac{1}{2} \int_{\T^d} |\theta_\kappa |^2 (s,x ) dx   \,. 
        \end{equation} 
    \end{itemize}
\end{lemma}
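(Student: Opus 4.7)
The three assertions are standard consequences of the diffusive structure combined with the divergence-free property of $u$. The plan is to build the solution by approximation in $u$, derive uniqueness directly from the energy method using Lions--Magenes, and prove the two energy equalities by spatial mollification of the solution itself, using the diffusion to provide enough regularity to pass to the limit in the resulting commutator.

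\textbf{Existence, a priori bounds, and uniqueness.} I mollify $u$ spatially to obtain divergence-free smooth fields $u_n \to u$ strongly in $L^\infty_t L^2_x$, solve the corresponding parabolic Cauchy problem classically to get $\theta_n$, and exploit the divergence-free cancellation $\int u_n \cdot \nabla \theta_n\, \theta_n \, dx = 0$ to obtain the uniform identity
\[
 \tfrac12\|\theta_n(t)\|_{L^2}^2 + \kappa\int_0^t\|\nabla\theta_n\|_{L^2}^2 \, ds = \tfrac12\|\theta_{\initial}\|_{L^2}^2.
\]
A Stampacchia truncation at level $M=\|\theta_{\initial}\|_{L^\infty}$ (testing with $(\theta_n-M)_+$, where the advective term again vanishes by divergence-freeness) yields $\|\theta_n\|_{L^\infty}\leq\|\theta_{\initial}\|_{L^\infty}$. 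Since $u_n\theta_n$ is bounded in $L^\infty_t L^2_x$, the equation gives a uniform bound on $\partial_t\theta_n$ in $L^2_t H^{-1}_x$; Aubin--Lions then produces (along a subsequence) $\theta_n\to\theta_\kappa$ strongly in $L^2_{t,x}$, weakly in $L^2_t H^1_x$, and weakly-$*$ in $L^\infty$, so the divergence form $\dv(u_n\theta_n)\to \dv(u\theta_\kappa)$ allows passage to the limit in \eqref{e:ADV-DIFF}. Uniqueness follows by applying the same energy method to the difference $w=\theta_\kappa-\tilde\theta_\kappa\in L^\infty\cap L^2_t H^1_x$, whose time derivative lies in $L^2_t H^{-1}_x$ so that Lions--Magenes justifies the pairing $\langle\partial_t w,w\rangle_{H^{-1},H^1}=\tfrac12\tfrac{d}{dt}\|w\|_{L^2}^2$, while $\langle\dv(uw),w\rangle=-\tfrac12\int u\cdot\nabla w^2=0$.

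\textbf{Energy equalities and main obstacle.} For the local identity I set $\theta^\varepsilon=\theta_\kappa*\rho_\varepsilon$ (mollification in $x$), which satisfies
\[
 \partial_t\theta^\varepsilon+\dv(u\theta^\varepsilon)=\kappa\Delta\theta^\varepsilon+r^\varepsilon, \qquad r^\varepsilon:=\dv\bigl(u\theta^\varepsilon-(u\theta_\kappa)*\rho_\varepsilon\bigr),
\]
and note that since $u\theta_\kappa,\,u\theta^\varepsilon\in L^\infty_t L^2_x$ both converge in $L^2_{t,x}$ to $u\theta_\kappa$, we get $r^\varepsilon\to 0$ in $L^2_t H^{-1}_x$. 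Multiplying the smooth equation by $\theta^\varepsilon$ and using $\dv u=0$ yields a pointwise identity in which $\tfrac12\partial_t(\theta^\varepsilon)^2\to\tfrac12\partial_t\theta_\kappa^2$, $\tfrac12\dv(u(\theta^\varepsilon)^2)\to\tfrac12\dv(u\theta_\kappa^2)$, and $\tfrac{\kappa}{2}\Delta(\theta^\varepsilon)^2\to\tfrac{\kappa}{2}\Delta\theta_\kappa^2$ as distributions via strong $L^2_{t,x}$ convergence of $\theta^\varepsilon$, while $\int r^\varepsilon\theta^\varepsilon\phi=\langle r^\varepsilon,\theta^\varepsilon\phi\rangle_{H^{-1},H^1}\to 0$ since $\theta^\varepsilon\phi$ is bounded in $L^2_t H^1_x$. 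The global equality \eqref{e:energy-equality-global} then follows by integrating over $\T^d$, killing the divergence and Laplacian. The genuinely nontrivial step is the strong $L^2_{t,x}$ convergence $\nabla\theta^\varepsilon\to\nabla\theta_\kappa$ needed for the dissipation $\kappa|\nabla\theta^\varepsilon|^2$; I would obtain this by first establishing the global energy identity for $\theta_\kappa$ directly via the Lions--Magenes chain rule (as in the uniqueness step, which requires no spatial mollification), and then combining the approximate global identities with the weak lower semicontinuity $\liminf_\varepsilon\kappa\int\|\nabla\theta^\varepsilon\|^2\geq\kappa\int\|\nabla\theta_\kappa\|^2$ to force equality and hence strong convergence of the dissipation.
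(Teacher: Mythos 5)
Your proof is correct and follows essentially the same route as the paper (smooth approximation for existence, energy method with Lions--Magenes for uniqueness, DiPerna--Lions-type commutator to obtain the local energy equality, and integration in space for the global one). The one place where the two diverge is detail rather than substance: you mollify only $u$ for existence and invoke Aubin--Lions plus a Stampacchia truncation to get the $L^\infty$ bound, whereas the paper mollifies both $u$ and $\theta_\initial$ and contents itself with weak$^*$ limits, leaning on linearity; and you build the global equality first via the Lions--Magenes chain rule while the paper obtains it by integrating the local one using heat-equation regularity ($\partial_t\theta_\kappa\in L^2_{t,x}\Rightarrow\theta_\kappa\in C_tL^2_x$).

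One remark on your final paragraph: you single out the strong $L^2_{t,x}$ convergence $\nabla\theta^\varepsilon\to\nabla\theta_\kappa$ as ``the genuinely nontrivial step'' and propose a detour through the global identity plus weak lower semicontinuity. This is unnecessary: $\theta^\varepsilon=\theta_\kappa*_x\rho_\varepsilon$, so $\nabla\theta^\varepsilon=(\nabla\theta_\kappa)*_x\rho_\varepsilon$, and since $\nabla\theta_\kappa\in L^2((0,1)\times\T^d)$ the convergence is immediate (pointwise in $t$ by standard mollification in $L^2(\T^d)$, then dominated convergence in $t$ using the Young bound $\|(\nabla\theta_\kappa(t))*\rho_\varepsilon\|_{L^2}\le\|\nabla\theta_\kappa(t)\|_{L^2}$). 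The lower-semicontinuity argument you sketch is correct, but it amounts to reproving a fact that holds for elementary reasons; the nontriviality you are perceiving belongs to the case where the approximants are genuine solutions to a modified equation (as in your existence step), not to the case of mollifying the solution itself.
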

\begin{proof}[Sketch of the proof]
    We provide here a sketch of the proof since it is a well known result. Consider a mollifier $\rho_\varepsilon \in C^\infty (\T^d)$ and $\theta_{\initial , \varepsilon } = \theta_{\initial} \star \rho_\varepsilon$ and $u_\varepsilon = u \star \rho_\varepsilon$. Then there exists a unique smooth solution to 
    \begin{align*}
        \partial_t \theta_{\kappa, \varepsilon} + u_\varepsilon \cdot \nabla \theta_{\kappa, \varepsilon} = \kappa \Delta \theta_{\kappa, \varepsilon}
        \\
        \theta_{\kappa, \varepsilon } (0, \cdot) = \theta_{\initial , \varepsilon} (\cdot )
    \end{align*}
    Multiplying the previous identity by $\theta_{\kappa, \varepsilon}$ and integrating in space-time we get a uniform in $\varepsilon$ bound 
    $$ \int_{\T^3} | \theta_{\kappa, \varepsilon} (t,x)|^2 dx + 2 \kappa \int_{0}^1 \int_{\T^3} |\nabla \theta_{\kappa, \varepsilon} |^2 = \int_{\T^3} |\theta_{\initial, \varepsilon }|^2 \leq \int_{\T^3} |\theta_{\initial }|^2 \,.$$
{Moreover, $\| \theta_{\kappa, \eps}(t, \cdot) \|_{L^{\infty}(\T^d)} \leq \| \theta_{\initial, \eps} \|_{L^{\infty}(\T^d)} \leq \| \theta_{\initial} \|_{L^{\infty}(\T^d)}$ by a comparison argument.}
    Therefore,  up to subsequences, there exists $\theta_{\kappa} \in L^\infty((0,1) \times \T^3) \cap L^2((0,1); H^1(\T^3))$ such that $\theta_{\kappa, \varepsilon} \rightharpoonup^\star \theta_{\kappa}$ as $\varepsilon \to 0$. By the linearity of the equation it is straightforward to check that $\theta_{\kappa} $ is a solution to the advection-diffusion equation with velocity field $u$ and initial datum $\theta_{\initial}$. {We now prove the local energy equality by } mollifying the equation with a space-time {mollifier $\psi_{\delta} \in C^\infty ((0,1) \times \T^3)$ and multiplying the equation by $\theta_{\kappa, \delta} = \theta_{\kappa} \star \psi_{\delta} $
    $$ \partial_t |\theta_{\kappa, \delta}|^2 + 2 u_\delta \cdot \nabla \theta_{\kappa, \delta} \theta_{\kappa, \delta}  = - 2 \kappa  | \nabla \theta_{\kappa, \delta}|^2 + \kappa \Delta |\theta_{\kappa, \delta}|^2 + R_{\delta} \theta_{\kappa, \delta} \,,$$
    where $R_{\delta} =  2 u_\delta \cdot \nabla \theta_{\kappa, \delta} - 2 ( u \cdot \nabla \theta_{\kappa} )_\delta$ where $( u \cdot \nabla \theta_{\kappa} )_\delta = ( u \cdot \nabla \theta_{\kappa} ) \star \psi_{\delta}$. It is straightforward to check that $\| R_{\delta} \|_{L^1((0,1) \times \T^d)} \to 0$ and that all the other terms pass into the limit as $\delta \to 0$.}  Finally, suppose that there are two solutions $\theta_{\kappa,1}, \theta_{\kappa, 2} \in L^\infty((0,1) \times \T^3) \cap L^2 ((0,1); H^1(\T^d))$ then {by heat equation regularity we observe that $\partial_t \theta_{\kappa, 1}, \partial_t \theta_{\kappa,2} \in  L^2 ((0,1); L^2 (\T^d))$}, therefore by subtracting the two equations, multiplying the equation $\theta_{\kappa, 1} - \theta_{\kappa, 2}$ and integrating in space-time we get
    $$ \frac{d}{dt} \int_{\T^d} | \theta_{\kappa, 1}(t,x) - \theta_{\kappa,2}(t,x)|^2  dx \leq 0  \,,$$
    in the sense of distributions.
    {Finally, the fact that $\partial_t \theta_\kappa \in L^2((0,1) \times \T^d)$ implies that $\theta_\kappa \in C([0,1]; L^2 (\T^d))$ so that the global energy equality follows from integrating the local energy equality in space-time.}
\end{proof}

 \subsection{Uniqueness result for the advection equation}
\begin{lemma} \label{lemma:uniqueness}
Let $u : \T^3 \to \R^3$ be a bounded, divergence-free velocity field such that $u \in C^{\infty} (\T^2 \times [0,1/2); \R^3)$, {$u^{(3)} \equiv 1$ in $\T^3$} and {$u^{(1)} \equiv u^{(2)} \equiv 0$ in $\T^2 \times (1/2, 1)$}. Then the bounded solutions of the advection equation are unique.
\end{lemma}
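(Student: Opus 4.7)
The plan is to reduce the 3D uniqueness problem to a one-parameter family of 2D advection problems, using that $u^{(3)}\equiv 1$ is a pure transport in $z$, and then to split each 2D time interval into smooth pieces (where classical uniqueness applies) and trivial pieces (where the field vanishes).

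\textbf{Step 1 (Co-moving change of variables).} Let $\theta_1,\theta_2$ be two bounded distributional solutions with the same initial datum and set $\phi=\theta_1-\theta_2$, which solves $\partial_t\phi+u\cdot\nabla\phi=0$ with $\phi(0,\cdot)=0$. Define, on $[0,1]\times\T^3$,
\[
\psi(t,x,y,w)\coloneqq\phi\bigl(t,x,y,\,(w+t)\bmod 1\bigr).
\]
Because $u^{(3)}\equiv 1$, a direct distributional computation (justified by mollification in the $(x,y)$ variables, passing to the limit, using the boundedness of $\phi$ and of $u$) shows that $\psi$ solves the 2D advection equation, parametrized by $w\in\T$,
\[
\partial_t\psi+\tilde u(t,x,y,w)\cdot\nabla_{x,y}\psi=0,\qquad \tilde u(t,x,y,w)\coloneqq\bigl(u^{(1)},u^{(2)}\bigr)\bigl(x,y,(w+t)\bmod 1\bigr),
\]
with $\psi(0,\cdot)=0$.

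\textbf{Step 2 (Piecewise structure of $\tilde u$).} Fix $w\in\T$. The set $\{t\in[0,1] : (w+t)\bmod 1\in[0,\tfrac12)\}$ is a union of at most two intervals $J_1^w,J_2^w\subset[0,1]$, and on these intervals $\tilde u(\cdot,\cdot,\cdot,w)\in C^\infty(J_i^w\times\T^2;\R^2)$ is divergence free in $(x,y)$ by the divergence-free assumption on $u$ together with $u^{(3)}\equiv 1$. On the complement $[0,1]\setminus(J_1^w\cup J_2^w)$ one has $\tilde u(\cdot,\cdot,\cdot,w)\equiv 0$ by hypothesis.

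\textbf{Step 3 (Uniqueness on each piece).} From the equation, $\partial_t\psi\in L^\infty_tH^{-1}_{x,y,w}$, so $\psi\in C([0,1];L^2(\T^3))$ and slices at fixed times are well defined. On each interval $J_i^w$ the 2D advection equation has a smooth, bounded, divergence-free velocity field, so classical characteristics give the representation $\psi(t,x,y,w)=\psi(s_i^w,\Phi^w_{s_i^w\to t}(x,y),w)$ for $t\in J_i^w$, where $s_i^w$ is the left endpoint of $J_i^w$ and $\Phi^w$ is the flow of $\tilde u(\cdot,\cdot,\cdot,w)$. Hence bounded solutions are uniquely determined on each $J_i^w$ by their value at $s_i^w$. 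On each interval of the complement, $\partial_t\psi=0$ in the sense of distributions, so $\psi$ is constant in time there.

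\textbf{Step 4 (Chaining).} The interval $[0,1]$ is covered by at most four successive sub-intervals, alternating between smooth pieces and zero pieces. Starting from $\psi(0,\cdot,\cdot,w)=0$, propagate by characteristics on the first piece and by time-constancy on the second, and iterate. This yields $\psi(t,\cdot,\cdot,w)\equiv 0$ for every $t\in[0,1]$ and a.e.\ $w\in\T$, hence $\phi\equiv 0$, and $\theta_1=\theta_2$.

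\textbf{Main obstacle.} The only delicate step is Step 1: justifying the change of variables at the distributional level despite the jump of $u^{(1,2)}$ at $z=\tfrac12$. The cleanest route is to mollify $\phi$ in the $(x,y)$ variables (which commutes exactly with the operator since $\tilde u$ does not depend on the mollification variables in a bad way), derive the identity pointwise in $t$ with values in $L^\infty_{x,y,w}$ using $u^{(3)}=1$ to transfer the $\partial_z$ derivative into a $\partial_t$ derivative via the $w$-shift, and then pass to the limit using the boundedness of $\phi$ and $\tilde u$. All other steps are classical once the reduction is in place.
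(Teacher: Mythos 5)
Your proof is a genuinely different route from the paper's. The paper argues directly in three dimensions: it proves that the forward integral curves of $u$ are unique (local Lipschitz on $\{z<\sfrac12\}$, boundedness of $u$ to push the trajectory continuously to $z=\sfrac12$, then the constant drift for $z>\sfrac12$) and then invokes Ambrosio's superposition principle \cite{A08} to transfer ODE uniqueness to uniqueness of bounded distributional solutions. You instead use the co-moving change of variables $w=z-t$ to decouple the equation into a one-parameter family of 2D transport problems and argue piecewise in time on each $w$-slice. This is conceptually appealing and avoids the superposition principle, which is the heaviest tool in the paper's proof; the trade is that your argument needs more bookkeeping in the disintegration steps, which is exactly where the two remaining gaps are.

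You flag Step 1 as the only delicate point, but there are two more. First, going from the 3D distributional identity for $\psi$ to a family of 2D transport equations for a.e.\ fixed $w$ is a Fubini-plus-separability argument (test against $g(w)h(t,x,y)$ and use a countable dense family of $h$'s), and the matching of initial traces must be checked in the same way; moreover $\psi\in C([0,1];L^2(\T^3))$ does \emph{not} slice to continuity of $\psi(\cdot,\cdot,\cdot,w)$ for a.e.\ fixed $w$ (the example $\mathbbm{1}_{\{t\le w\}}$ shows this), so time-continuity of each slice has to be recovered from the 2D equation itself, e.g.\ as weak $L^2$-continuity from $\partial_t\psi(\cdot,\cdot,\cdot,w)\in L^\infty_tH^{-1}_{x,y}$. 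Second, your phrase ``on each interval $J_i^w$ the 2D advection equation has a smooth, bounded, divergence-free velocity field, so classical characteristics give the representation'' overstates the regularity: $\tilde u(\cdot,\cdot,\cdot,w)$ is smooth only on the \emph{interior} of $J_i^w$, and in the actual construction its derivatives blow up as $(w+t)\to\sfrac12$, so the flow is not defined up to the right endpoint. The fix is the same the paper uses for the 3D flow: from local uniqueness get $\psi\equiv 0$ on the half-open interval, then use boundedness and (weak) time-continuity of the slice to reach the endpoint before continuing into the next piece. With these patches your argument is correct, but you should make them explicit rather than outsourcing everything to ``classical characteristics''.
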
 
\begin{proof}
We start by proving that integral curves of $u$ are unique. Let $(x,y,z) \in \T^3$ be arbitrary. We will show that solutions $\gamma \colon [0, \infty) \to \T^3$ of
 \begin{equation}\label{eq:IntegralCurvesUniqueness}
  \left\{
  \begin{array}{l}
   \dot{\gamma}(t) = u(\gamma(t)); \\
   \gamma(0) = (x,y,z);
  \end{array}
  \right.
\end{equation}
are unique for all $(x,y,z) \in \T^3$. 
It is enough to prove that such solutions are unique in the time interval $[0, \sfrac{1}{2}]$. Indeed, if this is known, iterating this fact allows us to conclude that we have uniqueness in the time interval $[0, \infty)$. We divide the proof in two distinct cases. \\
\textbf{Case 1: Assume that \pmb{$0 \leq z < \sfrac{1}{2}$}.}
Since $u \in W^{1, \infty}_{loc}(\T^2 \times [0, \sfrac{1}{2}); \R^3)$, solutions to \eqref{eq:IntegralCurvesUniqueness} are unique for $t \in [0, \sfrac{1}{2} - z)$ {since $u^{(3)} \equiv 1$ in $\T^3$}. We prove that the limit of $\gamma(t)$ as $t \to \sfrac{1}{2} - z$ exists. Since $u \in L^{\infty}(\T^3)$, $\| \dot{\gamma} \|_{L^{\infty}((0, \sfrac{1}{2} - z))} \leq \| u \|_{L^{\infty}(\T^3)} < \infty$ and therefore $\lim_{t \to \sfrac{1}{2} - z} \gamma(t)$ exists. Denote this limit as $\gamma(\sfrac{1}{2} - z)$. 
Since {$u \equiv (0,0,1)$ in $\T^2 \times (1/2, 1)$}, we must have $\gamma(t) = \gamma(\sfrac{1}{2} - z) + (0,0,t)$ for all $t \in (\sfrac{1}{2} - z, 1 - z)$. This proves uniqueness for $t \in [0, 1 - z) \supset [0, \sfrac{1}{2}]$ as wished. \\
\textbf{Case 2: Assume that \pmb{$\sfrac{1}{2} \leq z < 1$}.}
Since {$u \equiv (0,0,1)$ in $\T^2 \times (1/2, 1)$}, it is clear that the solution is unique for $t \in [0,1-z)$ and $\gamma(t) = (x,y,z + t)$ for all $t \in [0, 1 - z]$. In the time interval $[1 - z, \sfrac{3}{2} - z]$, Case 1 above applies and yields the desired uniqueness for this second case.

Finally, the integral curves of $u$ being unique combined with the superposition principle (see \cite{A08}), implies that non-negative solutions of the advection equation are unique. Since the equation is linear, also bounded solutions are unique.
\end{proof}
 
 \subsection{Stability between advection  equations and advection-diffusion equations}
 We present a result which gives an upper bound on the $L^2$-distance between a solution to the advection-diffusion equation and the advection equation.
 \begin{lemma}\label{lemma:AdvectionDiffusionAndTransport}
 Let $d \in \N$, $0 < T < \infty$ and $\kappa > 0$. Let  $u: [0,T] \times \T^d \to \R^d$ be a bounded velocity field such that there exists an initial datum $\theta_{\initial} \in L^{{\infty}} (\T^d)$ for which there is a unique {bounded} solution  $\theta \in L^2 ((0,T); H^1 ( \T^d)  ) \cap C({[}0,T); L^2 (\T^d))$ of the advection equation.  Let also ${\theta}_{\kappa} $  be the unique solution in the class $ L^2 ((0,T); H^1 (\T^d)) \cap C({[}0,T); L^2 (\T^d))$ of the advection diffusion equation with velocity field $u$, diffusion parameter $\kappa $ and initial datum $\theta_{\initial}$. Namely, we have
 \[
  \left\{
  \begin{array}{l}
   \partial_t {\theta}_{\kappa} + u \cdot \nabla {\theta}_{\kappa} = \kappa \Delta {\theta}_{\kappa}; \\
   \partial_t {\theta} + u \cdot \nabla {\theta} = 0; \\
   {\theta}_{\kappa}(0, \cdot ) = {\theta} (0, \cdot ) = {\theta}_{\initial} (\cdot ).
  \end{array}
  \right.
 \]
 Then 
 \begin{equation*}
 \| {\theta}_{\kappa}(t, \cdot ) - {\theta} (t, \cdot) \|_{L^2(\T^d)}^2 \leq \kappa \int_0^t \| \nabla {\theta} (s, \cdot ) \|_{L^2(\T^d)}^2 \, ds \quad \text{for all} \quad t \in [0,T].
 \end{equation*}
 \end{lemma}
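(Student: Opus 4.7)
\medskip

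\noindent\textbf{Proof proposal.} The natural approach is to study the difference $w \coloneqq \theta_\kappa - \theta$, which vanishes at time $0$ and satisfies, in the sense of distributions,
\begin{equation*}
\partial_t w + u \cdot \nabla w = \kappa \Delta w + \kappa \Delta \theta .
\end{equation*}
Testing this equation against $w$ itself (after a suitable mollification, see below), using that $u$ is divergence-free so that $\int u\cdot\nabla w \, w\, dx = 0$, and integrating by parts twice on the right-hand side gives
\begin{equation*}
\frac{1}{2}\frac{d}{dt}\|w(t,\cdot)\|_{L^2(\T^d)}^2 = -\kappa\|\nabla w(t,\cdot)\|_{L^2(\T^d)}^2 - \kappa\int_{\T^d} \nabla \theta(t,\cdot)\cdot \nabla w(t,\cdot)\, dx.
\end{equation*}
Applying Cauchy--Schwarz and Young's inequality to the last term in the form $\kappa\,ab\leq \tfrac{\kappa}{2}a^2+\tfrac{\kappa}{2}b^2$ absorbs the $\|\nabla w\|_{L^2}^2$ contribution on the dissipation side, leaving
\begin{equation*}
\frac{d}{dt}\|w(t,\cdot)\|_{L^2(\T^d)}^2 \leq \kappa \|\nabla\theta(t,\cdot)\|_{L^2(\T^d)}^2 .
\end{equation*}
Integrating in time from $0$ to $t$ and using $w(0,\cdot)\equiv 0$ yields exactly the claimed inequality.

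\medskip

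\noindent The only step requiring care is justifying the test against $w$, since $\theta_\kappa$ and $\theta$ are only assumed to be in $L^2_t H^1_x \cap C_t L^2_x$. The plan is to mollify both equations in space (or in space-time) by a standard kernel, obtaining $\theta^\varepsilon$ and $\theta_\kappa^\varepsilon$ which are smooth enough to multiply pointwise. The commutator $[u\cdot\nabla,\rho_\varepsilon\star]$ applied to each of $\theta$ and $\theta_\kappa$ converges to $0$ in $L^1_{t,x}$ by the DiPerna--Lions commutator lemma (the velocity field is bounded and the solutions lie in $L^2_t H^1_x$), and the remaining linear terms pass to the limit directly. This produces the identity for $\|w\|_{L^2}^2$ stated above, after which the Young-inequality step and Gr\"onwall-free time integration conclude the argument. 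The main (minor) obstacle is precisely handling this mollification rigorously in the low-regularity class; since $\theta,\theta_\kappa\in L^2_t H^1_x$ and $u\in L^\infty$, no genuine difficulty arises, but one must make sure to use the energy identity afforded by the $C_t L^2_x$ continuity (guaranteed here by the hypotheses, and independently by Lemma~\ref{lemma:local-energy} for $\theta_\kappa$) to integrate the differential inequality from $0$ to $t$ without losing the vanishing initial condition.
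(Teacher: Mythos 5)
Your proposal is correct and follows essentially the same argument as the paper: subtract the equations, test against $w = \theta_\kappa - \theta$, use incompressibility to drop the transport term, split the cross term $-2\kappa\int\nabla\theta\cdot\nabla w$ by Young's inequality so that $\kappa\|\nabla w\|_{L^2}^2$ is absorbed into the dissipation, and integrate in time from $0$ using $w(0,\cdot)=0$. The paper's proof states the integrated energy identity directly and is terser about the justification, simply noting it follows from the assumed $L^2_tH^1_x\cap C_tL^2_x$ regularity; your extra discussion of the mollification step is sound (note that with $u\in L^\infty$ and $\theta,\theta_\kappa\in L^2_tH^1_x$ the commutator vanishes for elementary reasons — $u\cdot\nabla\theta$ already lies in $L^2_{t,x}$ — so one does not really need the full strength of the DiPerna--Lions lemma, which is designed for the low-regularity case where $\nabla\theta$ is not available).
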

 \begin{proof}
The existence and uniqueness of solutions of the advection diffusion equation with $L^\infty$ velocity field and $L^2$ initial datum is classical (see for instance \cite{LSU68}). From the regularity of $\theta_\kappa$ and $\theta$ we notice that the energy equality holds
\begin{align*}
 \int_{\T^d} & | \theta_\kappa (t, x) - \theta (t, x) |^2 dx 
 \\
&  = - 2 \kappa \int_0^t \int_{\T^d} | \nabla (\theta_\kappa (s,x) - \theta (s,x) )|^2 dx ds  -2 \kappa  \int_0^t \int_{\T^d} \nabla \theta (s,x) \cdot \nabla (\theta_\kappa (s,x) - \theta (s,x))  dx ds 
\\
& \leq - 2 \kappa \int_0^t \int_{\T^d} | \nabla (\theta_\kappa (s,x) - \theta (s,x) )|^2 dx ds
\\
& \quad + \kappa  \int_0^t \int_{\T^d} | \nabla \theta (s,x)|^2 dx ds + \kappa \int_0^t \int_{\T^d} | \nabla (\theta_\kappa (s,x) - \theta (s,x) ) |^2 dx ds   
\\
& \leq \kappa  \int_0^t \int_{\T^d} | \nabla \theta (s,x)|^2 dx ds  \,.
\end{align*}
 \end{proof} 
 
 \begin{remark}
There are examples for which there are no solutions $\theta \in L^2 ((0,T); H^1 (\T^d))$ to the advection equation with a divergence free velocity field in $L^\infty ((0,T) ;W^{1,p} (\T^d))$ for any $p <\infty$ (see \cite[Theorem 1]{ACM-loss}). Therefore, the existence assumption is non trivial in the previous statement. 
 \end{remark}
 \subsection{Enhanced diffusion}
We present a result which gives a quick $L^2$-norm decaying of the solution of the heat equation  thanks to the high frequency of the initial datum.
 \begin{lemma}\label{lemma:EnhancedDiffusion}
 Let $v_{in} \in L^2(\T^d)$ with zero-average, let $\lambda \in \N$ and define $v_{\lambda, in}$ as $v_{\lambda , \initial}(x)  =  v_{\initial}(\lambda x)$ for all $x \in \T^2$. Then let $v_{\lambda} \in L^{\infty}((0, \infty); L^2(\T^d))$ be the solution to the heat equation
 \begin{align} \label{rescaled_heat_equation}
 \left\{
  \begin{array}{l}
   \partial_t v_{\lambda} = \kappa \Delta v_{\lambda}; \\
   v_{\lambda}(0,x) = v_{\lambda, \initial}(x). \\
  \end{array}
  \right.
 \end{align}
 Then the following holds
 \begin{equation*}
  \| v_{\lambda}(t, \cdot) \|_{L^2(\T^d)}^2 \leq \e^{-  \kappa \lambda^2 t} \| v_{\lambda, \initial} \|_{L^2(\T^d)}^2 \qquad \text{ for all } t \in [0, \infty) \,.
 \end{equation*}
  \end{lemma}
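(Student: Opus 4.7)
The plan is to diagonalize the heat equation via the Fourier series on $\T^d$ and exploit the spectral gap created by the rescaling $v_{\initial}(\lambda \cdot)$.

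First I would write $v_{\initial}(x) = \sum_{k \in \Z^d} \hat v_{\initial}(k) e^{2\pi i k \cdot x}$. The zero-average assumption gives $\hat v_{\initial}(0) = 0$. For the rescaling, a change of variable (using that $\lambda \in \N$, so $v_{\initial}(\lambda \cdot)$ is still $\T^d$-periodic) yields
\begin{equation*}
v_{\lambda, \initial}(x) = \sum_{k \in \Z^d} \hat v_{\initial}(k) e^{2 \pi i k \cdot \lambda x} = \sum_{j \in \lambda \Z^d} \hat v_{\initial}(j/\lambda) e^{2 \pi i j \cdot x},
\end{equation*}
so the Fourier coefficients $\hat v_{\lambda, \initial}(j)$ vanish unless $j \in \lambda \Z^d$, and in particular they vanish on $\{0 < |j| < \lambda\}$ and at $j = 0$. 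Hence the spectrum of $v_{\lambda, \initial}$ is contained in $\{j \in \Z^d : |j| \geq \lambda\}$.

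Next, since the solution to \eqref{rescaled_heat_equation} is given explicitly in Fourier by $\hat v_{\lambda}(t, j) = e^{- 4 \pi^2 \kappa |j|^2 t} \, \hat v_{\lambda, \initial}(j)$, I would apply Parseval's identity to get
\begin{equation*}
\| v_{\lambda}(t, \cdot) \|_{L^2(\T^d)}^2 = \sum_{|j| \geq \lambda} e^{- 8 \pi^2 \kappa |j|^2 t} \, |\hat v_{\lambda, \initial}(j)|^2 \leq e^{- 8 \pi^2 \kappa \lambda^2 t} \sum_{|j| \geq \lambda} |\hat v_{\lambda, \initial}(j)|^2 = e^{- 8 \pi^2 \kappa \lambda^2 t} \| v_{\lambda, \initial} \|_{L^2(\T^d)}^2,
\end{equation*}
which is even stronger than the stated estimate (the factor $8\pi^2$ can be absorbed, or the authors are likely stating the result up to a normalization of the Laplacian/Fourier transform; in any case the inequality $e^{-\kappa\lambda^2 t}$ follows a fortiori).

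There is no real obstacle here — this is a one-line Fourier computation whose entire content is the observation that rescaling by an integer $\lambda$ shifts the Fourier support into the region $|j| \geq \lambda$, so the standard exponential decay rate $e^{-\kappa|j|^2 t}$ of the heat semigroup on each frequency is bounded by $e^{-\kappa \lambda^2 t}$ uniformly. The only care to take is to verify periodicity of $v_{\initial}(\lambda \cdot)$ (which uses $\lambda \in \N$) and to use the zero-average hypothesis to discard the $j = 0$ mode before applying the bound.
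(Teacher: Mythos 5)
Your proof is correct, and it takes a slightly different route from the paper's. The paper first records (without proof, essentially as the Poincar\'e/spectral-gap statement for the heat semigroup) that the solution $v_1$ with the unrescaled datum satisfies $\|v_1(t,\cdot)\|_{L^2}^2 \leq e^{-\kappa t}\|v_{1,\initial}\|_{L^2}^2$, and then deduces the general case by the change of variables $v_\lambda(t,x) = v_1(\lambda^2 t,\lambda x)$. You instead carry out the Fourier computation directly for arbitrary $\lambda$: you show the rescaling pushes the Fourier support of $v_{\lambda,\initial}$ into $\{|j|\geq\lambda\}$ (using both $\lambda\in\N$ for periodicity and zero average to kill the $j=0$ mode), and then read off the decay from the explicit heat multiplier. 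The two arguments rest on the same spectral observation, but yours is a direct computation where the paper's is a reduction-by-scaling; as you rightly note, the direct route also recovers the sharper constant $e^{-8\pi^2\kappa\lambda^2 t}$, whereas the paper discards the $8\pi^2$ already in its $\lambda=1$ base case. No gap.
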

  \begin{proof}
  From the heat equation we have that the solution $v_{1} \in L^\infty( (0, \infty); L^2 (\T^d))$ of 
  \[
  \left\{
  \begin{array}{l}
   \partial_t v_{1} = \kappa \Delta v_{1}; \\
   v_{1}(0,x) = v_{1, \initial}(x). \\
  \end{array}
  \right.
 \]
  is such that 
  $$ \| v_{1} (t, \cdot) \|_{L^2}^2 \leq \e^{-  \kappa t} \| v_{1, \initial} (\cdot) \|_{L^2(\T^d)}^2 \qquad \text{ for all } t \in [0, \infty) \,.$$
  Observing that the unique solution in $L^\infty ((0,\infty); L^2(\T^d))$ of \eqref{rescaled_heat_equation} is $v_{\lambda} (t, x) = v_1 (\lambda^2 t, \lambda x)$, we conclude
  $$ \| v_{\lambda}(t, \cdot) \|_{L^2(\T^d)}^2 = \| v_{1}(\lambda^2 t,  \cdot) \|_{L^2(\T^d)}^2  \leq \e^{-  \kappa \lambda^2 t} \| v_{1, \initial} \|_{L^2(\T^d)}^2 = \e^{-  \kappa \lambda^2 t} \| v_{\lambda, \initial} \|_{L^2(\T^d)}^2  \qquad \text{ for all } t \in [0, \infty) \,. $$
  \end{proof}
 
 \subsection{Tail estimates for the advection-diffusion equation}
We present a quantified estimate on the tail of the solutions to the advection diffusion equation with constant coefficient in 1D starting with a compactly supported initial datum.
\begin{lemma}\label{lemma:TailEstimates1DAdvectionDiffusion}
Let $T>0$, {$\kappa > 0$} and ${\psi} \colon [0,T] \times \T \to \R$ be the unique bounded solution to
  \begin{align} \label{eqn:lemma_tail}
  \begin{cases}
  \partial_t {\psi} + \partial_x {\psi} = \kappa \partial_{xx} {\psi}; \\
   \psi(0,x) = \psi_{\initial}(x). \\
  \end{cases}
  \end{align}
 where $\psi_{in} \in L^\infty ( \T)$. Let us assume that  $\supp \psi_{in} \subset (a,b)$.  Then, defining 
 $A_c(t) =  ( a-c +t, b+c+ t)^c \subset \T$,
   we have 
 \begin{equation}\label{eq:TailEstimate1DAdvcetionDiffusion}
 \| {\psi}(t, \cdot) \|_{L^{\infty}(A_c(t))} \leq {2} \| \psi_{in} \|_{L^{\infty}(\T)} { \e^{- \frac{c^2}{4 \kappa t} }}.
 \end{equation}
\end{lemma}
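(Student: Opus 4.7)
The plan is to eliminate the constant drift by a translation, reducing to the pure heat equation on $\T$, and then apply a standard Gaussian tail estimate to the periodized heat kernel.

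First I would set $\tilde\psi(t,x) := \psi(t, x+t)$. The chain rule together with \eqref{eqn:lemma_tail} gives $\partial_t \tilde\psi = \kappa \partial_{xx}\tilde\psi$ on $\T$ with the same initial datum $\psi_{in}$, still supported in $(a,b)$. Since $A_c(t)$ is the $t$-translate of $B := (a-c, b+c)^c$, it suffices to show $\|\tilde\psi(t, \cdot)\|_{L^\infty(B)} \leq \|\psi_{in}\|_{L^\infty(\T)} \e^{-c^2/(8\kappa t)}$.

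Next I would represent $\tilde\psi$ by convolution with the periodized Gaussian heat kernel. Letting $G_t(z) := (4\pi\kappa t)^{-1/2} \e^{-z^2/(4\kappa t)}$ and $G_t^{\T}(x) := \sum_{k \in \Z} G_t(x+k)$, one has $\tilde\psi(t, x) = \int_a^b G_t^{\T}(x-y)\, \psi_{in}(y)\, dy$. With $M := \|\psi_{in}\|_{L^\infty(\T)}$, bounding $|\psi_{in}| \leq M$, expanding the periodization and performing the change of variables $z = x - y + k$ in each summand yields
\begin{equation*}
|\tilde\psi(t,x)| \leq M \sum_{k \in \Z}\int_{x-b+k}^{x-a+k} G_t(z)\, dz = M \int_E G_t(z)\, dz, \qquad E := \bigcup_{k \in \Z}[x-b+k,\, x-a+k].
\end{equation*}

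The crucial step is then the geometric inclusion $E \subset \{z \in \R : |z| \geq c\}$, valid for every $x \in B$. Indeed, the identity $d_{\T}(x,y) = \min_{k \in \Z}|x-y+k|$ combined with the assumption that $x$ has torus distance at least $c$ from $(a,b)$ forces $|x-y+k| \geq c$ for \emph{every} $y \in (a,b)$ and \emph{every} $k \in \Z$. Once this is in place, the standard one-dimensional Gaussian tail bound $\int_s^\infty \e^{-u^2}\, du \leq \tfrac{\sqrt{\pi}}{2} \e^{-s^2}$ gives
\begin{equation*}
\int_E G_t(z)\, dz \leq \int_{|z| \geq c} G_t(z)\, dz \leq \e^{-c^2/(4\kappa t)} \leq \e^{-c^2/(8\kappa t)},
\end{equation*}
and undoing the shift produces the claimed estimate on $A_c(t)$.

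The only mildly delicate point is the inclusion $E \subset \{|z| \geq c\}$, which encodes how the torus wrap-around interacts with the support of the initial datum; apart from this bookkeeping, the proof is a direct application of the Gaussian tail bound, with the factor $8$ in the exponent simply absorbing the slightly stronger exponent $c^2/(4\kappa t)$ that the argument naturally produces.
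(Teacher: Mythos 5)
Your proof is correct and follows essentially the same route as the paper's: you remove the drift by the shift $\tilde\psi(t,x)=\psi(t,x+t)$, represent $\tilde\psi$ via the (periodized) whole-line Gaussian heat kernel, use the geometric fact that $x\in(a-c,b+c)^c$ on $\T$ forces $|x-y+k|\geq c$ for all $y$ in the support and all $k\in\Z$, and close with a Gaussian tail estimate. The only cosmetic difference is that you make the final inequality explicit via $\int_s^\infty e^{-u^2}\,du\le\frac{\sqrt\pi}{2}e^{-s^2}$, which in fact produces the sharper exponent $c^2/(4\kappa t)$ (the paper asserts the bound with exponent $c^2/(8\kappa t)$ directly, without detailing this last estimate), and that you work with the periodized kernel $G_t^\T$ rather than periodically extending $\psi_{\mathrm{in}}$ to $\R$ — an equivalent bookkeeping choice.
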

\begin{proof}
 Let $\tilde \psi \colon [0, T] \times \T \to \R$ be the unique bounded solution to
  \[
  \left\{
  \begin{array}{l}
   \partial_t \tilde \psi = \kappa \partial_{xx} \tilde  \psi; \\
   \tilde \psi|_{t = 0} = \psi_{\initial}. \\
  \end{array}
  \right.
 \]
  We slightly abuse the notation extending $\psi_{\initial}$ to $\R$, namely $\psi_{\initial} (x) = \psi_{\initial}(x - \lfloor x \rfloor) $ for all $x \in \R$. Then the solution to the heat equation on $\R$  is given by
 \[
 \tilde  \psi(t,x) = \int_{- \infty}^{+ \infty} \frac{1}{\sqrt{4 \pi \kappa t}} \e^{- \frac{|x - \xi|^2}{4 \kappa t}} \psi_{\initial}(\xi) \, d \xi.
 \]
 We observe that this solution is $1$-periodic and therefore is also the solution on $\T$. { We also observe that \begin{align*}
 \star &:= \int_{\{ \xi :  |\xi| > c \}} \frac{1}{\sqrt{4 \pi \kappa t}} e^{- \frac{|\xi|^2}{4 \kappa t}} \, d \xi = 2 \int_{c}^{\infty} \frac{1}{\sqrt{4 \pi \kappa t}} e^{- \frac{\xi^2}{4 \kappa t}} \, d \xi = \frac{2}{\sqrt{\pi}} \int_{\frac{c}{\sqrt{4 \kappa t}}}^{\infty} e^{- \eta^2} \, d \eta \\
 &\leq  \frac{\sqrt{4 \kappa t}}{c \sqrt{\pi}} \int_{\frac{c}{\sqrt{4 \kappa t}}}^{\infty} 2 \eta e^{- \eta^2} \, d \eta = \frac{\sqrt{4 \kappa t}}{c \sqrt{\pi}} \int_{\frac{c^2}{4 \kappa t}}^{\infty} e^{- y} \, d y \leq \frac{\sqrt{4 \kappa t}}{c \sqrt{\pi}} e^{- \frac{c^2}{4 \kappa t}} \,,
\end{align*}
hence if $\frac{\sqrt{4 \kappa t}}{c \sqrt{\pi}} \leq 1$ then $\star \leq e^{- \frac{c^2}{4 \kappa t}}$. Otherwise, we have $\star \leq 1 \leq e^{\frac{1}{\pi}} e^{- \frac{c^2}{4 \kappa t}} \leq 2 e^{- \frac{c^2}{4 \kappa t}}$, where we used 
$e^{- \frac{1}{\pi}} \leq e^{- \frac{c^2}{4 \kappa t}}$ thanks to  $\frac{\sqrt{4 \kappa t}}{c \sqrt{\pi}} \geq 1$.
As a consequence, we have
\[
 \star \leq 2 e^{- \frac{c^2}{4 \kappa t}} \,.
\]
}
  Then,
 for any $x \in (a - c, b + c)^c$
 \begin{align*}
 \tilde \psi(t,x) &= \int_{\{ \xi :  |x - \xi| > c \}} \frac{1}{\sqrt{4 \pi \kappa t}} \e^{- \frac{|x - \xi|^2}{4 \kappa t}} \psi_{in}(\xi) \, d \xi \\
 &\leq \| \psi_{\initial} \|_{L^{\infty}(\T)} \int_{\{ \xi :  |\xi| > c \}} \frac{1}{\sqrt{4 \pi \kappa t}} \e^{- \frac{|\xi|^2}{4 \kappa t}} \, d \xi \\
 &\leq {2} \| \psi_{\initial} \|_{L^{\infty}(\T)} { \e^{- \frac{c^2}{4 \kappa t} }}.\\
 \end{align*}
 
 Let us now define ${\psi}(t,x) = \tilde \psi(t, x - t)$ for all $x \in \T$ and $t \in [0,T]$,  then ${\psi}$ is the unique bounded solution of \eqref{eqn:lemma_tail} and 
 by the previous computation we have
  $$\| {\psi}(t, \cdot) \|_{L^{\infty}(A_c(t))} = \| \tilde{\psi}(t, \cdot ) \|_{L^{\infty}((a-c, b+c)^c)} \leq {2} \| \psi_{in} \|_{L^{\infty}(\T)} {\e^{- \frac{c^2}{4 \kappa t} }}\,.$$
 \end{proof}
 Finally, we deduce the following result which applies to the special advection-diffusion equation that we are considering in this paper.
 \begin{corollary}\label{lemma:TailEstimates3DAdvectionDiffusion}
Let $T>0$ and $u : [0,T] \times \T^3 \to \R^3$ be a  bounded velocity field such that $u^{(3)} \equiv 1$. 
 Let $\theta_\kappa \colon [0,T] \times \T^3 \to \R$ be the unique bounded solution to the advection-diffusion equation with velocity field  $u$  and diffusion parameter $\kappa > 0$ and initial datum $\theta_{\initial} \in L^\infty(\T^3)$.
 If we have that $\supp(\theta_{\initial}) \subset \T^2 \times (a,b) \subset \T^3$, then
 \begin{equation*}
 \| \theta_\kappa (t, \cdot ) \|_{L^{\infty}(\T^2 \times A_c(t) )} \leq {2} \| \theta_{\initial} \|_{L^{\infty}(\T^3)} { \e^{- \frac{c^2}{4 \kappa t} }}, \qquad \text{for all } t \in [0,T]\,,
 \end{equation*}
 where $A_c(t) = (a + t - c, b + t + c)^c \subset \T$.
 \end{corollary}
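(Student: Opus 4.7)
The plan is to reduce the 3D tail estimate to the 1D result in Lemma \ref{lemma:TailEstimates1DAdvectionDiffusion} by constructing an $(x,y)$-independent barrier that dominates $|\theta_\kappa|$ pointwise. First I would set $\psi_\initial : \T \to \R$ to be $\psi_\initial(z) := \|\theta_\initial\|_{L^\infty(\T^3)} \mathbbm{1}_{(a,b)}(z)$, which lies in $L^\infty(\T)$ with $\supp \psi_\initial \subset (a,b)$, and let $\psi : [0,T] \times \T \to \R$ be the unique bounded solution of \eqref{eqn:lemma_tail} with this initial datum.

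Next, viewing $\psi$ as a function on $[0,T] \times \T^3$ that is constant in $(x,y)$, the assumption $u^{(3)} \equiv 1$ yields $u \cdot \nabla \psi = u^{(3)} \partial_z \psi = \partial_z \psi$ and $\Delta \psi = \partial_{zz}\psi$. Hence both $\psi$ and $-\psi$ are bounded solutions on $[0,T]\times\T^3$ of the advection-diffusion equation with velocity $u$, diffusion $\kappa$, and initial data $\pm \psi_\initial(z)$ respectively. Since $\supp(\theta_\initial) \subset \T^2 \times (a,b)$, the construction of $\psi_\initial$ ensures the pointwise ordering $-\psi_\initial(z) \leq \theta_\initial(x,y,z) \leq \psi_\initial(z)$ on $\T^3$.

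By the comparison principle for bounded solutions of the advection-diffusion equation, this initial ordering is preserved in time, so $|\theta_\kappa(t,x,y,z)| \leq \psi(t,z)$ on $[0,T]\times\T^3$. Since $\psi$ is independent of $(x,y)$, we have $\|\psi(t,\cdot)\|_{L^\infty(\T^2 \times A_c(t))} = \|\psi(t,\cdot)\|_{L^\infty(A_c(t))}$, and applying Lemma \ref{lemma:TailEstimates1DAdvectionDiffusion} directly to $\psi$ gives
\[
\|\theta_\kappa(t,\cdot)\|_{L^\infty(\T^2 \times A_c(t))} \leq \|\psi(t,\cdot)\|_{L^\infty(A_c(t))} \leq \|\psi_\initial\|_{L^\infty(\T)} e^{- c^2/(8\kappa t)} = \|\theta_\initial\|_{L^\infty(\T^3)} e^{- c^2/(8\kappa t)},
\]
which is in fact sharper than the stated bound (the factor of $2$ in the statement provides slack).

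The only mildly delicate point is the justification of the comparison principle when $u$ is merely bounded. I expect this to be the main, though standard, obstacle: it can be handled by mollifying $u$ only in the first two variables (thereby preserving $u^{(3)} \equiv 1$ and hence the fact that $\psi$ solves the regularized 3D equation), invoking the classical parabolic maximum principle for the resulting smooth problems, and passing to the weak$^*$ limit in the bounded solutions in the spirit of the proof sketch of Lemma \ref{lemma:local-energy}.
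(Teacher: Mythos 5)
Your proof is correct and follows essentially the same route as the paper: build the $z$-only barrier from $\|\theta_\initial\|_{L^\infty}\mathbbm{1}_{(a,b)}$, note that $u^{(3)}\equiv 1$ makes it a solution of the 3D equation, apply the comparison principle, and invoke Lemma~\ref{lemma:TailEstimates1DAdvectionDiffusion}. The only cosmetic difference is that the paper cites \cite[Proposition 4.4]{F08} for the comparison principle rather than sketching an approximation argument, and both proofs in fact yield the sharper bound without the factor of $2$.
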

 \begin{proof}
 {Define $M \colon \T \to \R$ as $M(z) = \| \theta_{\initial} \|_{L^{\infty}(\T^3)} \mathbbm{1}_{(a,b)} (z)$.} Let ${\psi} \colon [0,T] \times \T \to \R$ be the unique bounded solution of
  \begin{equation}
  \left\{
  \begin{array}{l}
   \partial_t {\psi} + \partial_z {\psi} = \kappa \partial_{zz} {\psi}; \\
   \psi(0,z) = M(z). \\
  \end{array}
  \right.
  \end{equation}
  By defining $\phi \colon [0,T] \times \T^3 \to \R$ as $\phi(t,x,y,z) = {\psi}(t,z)$ we have a solution to
  \begin{equation}
  \left\{
  \begin{array}{l}
   \partial_t \phi + u \cdot \nabla \phi = \kappa \Delta \phi; \\
   \phi(0,x,y,z) = M(z). \\
  \end{array}
  \right.
  \end{equation}
  By the comparison principle for the advection-diffusion equation (see for instance \cite[Proposition 4.4]{F08}) $|\theta(t,x,y,z)| \leq \phi(t,x,y,z)$ for all $t \in [0,T]$, $(x,y,z) \in \T^3$. Using Lemma~\ref{lemma:TailEstimates1DAdvectionDiffusion} we get
  \begin{align*}
   \| \theta(t) \|_{L^{\infty}(\T^2 \times A_c(t))} & \leq \| \phi(t) \|_{L^{\infty}(\T^2 \times A_c (t) )} = \| {\psi}(t) \|_{L^{\infty}(A_c(t))} \leq {2} \| \theta_{\initial} \|_{L^{\infty}(\T)} { \e^{- \frac{c^2}{4 \kappa t} }}.
  \end{align*}
 \end{proof}
 
 \section{Proof of Theorem~\ref{t_anomalous_autonomous}} \label{sec:proof-thB}
 The objective of this section is to prove Theorem~\ref{t_anomalous_autonomous}. In Subsection~\ref{subsec:PlanOfProofA}, we describe the plan of the proof. In the subsequent subsection we describe a partition of unity that is central in our proof. In Subsection~\ref{subsec:ProofOfTheoremAWithoutProofOfLemmas}, we give a proof of Theorem~\ref{t_anomalous_autonomous}. In this proof, we mention several lemmas which we choose not to prove immediately. Instead the proof of these lemmas are postponed to the last subsection, Subsection~\ref{subsec:ProofOfTheLemmasOfProofA}.
 Throughout this section, let $\alpha \in [0,1)$  as in Theorem~\ref{t_anomalous_autonomous} and $u : \T^3 \to \R^3$ be the autonomous velocity field constructed as in Subsection~\ref{subsec:ConstructionVectorField}. Let us fix the smooth initial datum $\theta_{\initial}$ as in \eqref{eq:initialdatum_theta}. 
 
 \subsection{Plan of the proof}\label{subsec:PlanOfProofA}
We begin by fixing some arbitrary $q \in m \N$. The main objective is to prove that 
\begin{equation}\label{eq:LTwoNormAlmostZero}
\| {\theta}_{\kappa_q}(1, \cdot ) \|_{L^{2}(\T^3)}^2 \lesssim a_0^{\eps \delta}.
\end{equation}
By selecting $a_0$ sufficiently small combined with the energy balance
$$\int_{\T^3} | \theta (t, x)|^2 \, dx + 2 \kappa_q \int_0^t \int_{\T^3} | \nabla \theta_{\kappa_q} (t, x) |^2 \, dx \, dt =  \int_{\T^3} | \theta_{\initial} (x)|^2 \, dx $$
 we obtain that 
\begin{equation} \label{eq:total-diss}
 \kappa_q \int_0^1 \| \nabla \theta_{\kappa_q}(t) \|_{L^2(\T^3)}^2 \, dt > \frac{1}{4},
\end{equation}
for any $q \in m \N$,
which proves \eqref{diss_main_OC}.  All the other properties of Theorem \ref{t_anomalous_autonomous} (and similarly for Theorem \ref{t_Onsager}) will follow from a stability estimate of the type $\| \theta_{\kappa_q} - \theta_0 \|_{L^\infty_t L^2_x}^2 \leq \beta  $ (where $\beta >0 $ is a fixed constant defined in Theorem \ref{t_Onsager}) and proving that $\kappa_q |\nabla \theta_{\kappa_q}|^2$ weakly* converges in the sense of measures to a measure $\mu \in \mathcal{M}((0,1) \times \T^4)$ such that $\mu_T  = \pi_{\#} \mu $ has a non trivial absolutely continuous part. The last property is given by trying to characterize as much as possible the sequence $\kappa_q |\nabla \theta_{\kappa_q}|^2$. 

More precisely, the proof is divided into 6 steps.
In the first step, we decompose the initial datum  on the z-axis as ${\theta}_{\initial} = \sum_{j=1}^{N_q} \theta_{\initial, j}$ such that $\supp (\theta_{\initial , j} ) \cap \supp (\theta_{\initial , k})  = \emptyset $ for any $j, k \in \{ 1,..., N_q \}$ with $|j-k| > 1$.
\begin{figure}[htp]
  \includegraphics[clip,width=11cm]{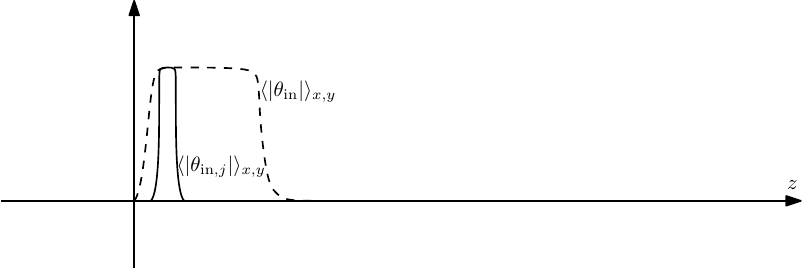}%
  \caption{The figure shows the profile $\theta_{\initial}$ averaged in $x,y$ depending only on $z$, more precisely on the vertical axis we represent the value $\langle | \theta_{\initial} | \rangle_{x,y} (z) = \int_{\T^2 } |\theta_{\initial}(x,y,z)| dx dy$ and on the horizontal axis  we have the $z$-variable.
}
\end{figure}

In Step 2, we show that the solution to the advection-diffusion equation 
$\theta_{\kappa_q, j}$ with initial datum $\theta_{\initial , j}$ and velocity field $u$
 is close to the solution to the advection equation $\theta_{0,j}$ with initial datum $\theta_{\initial , j}$ and velocity field $u$ up to a certain time. 
 
 \begin{figure}[htp]
\includegraphics[clip,width=11cm]{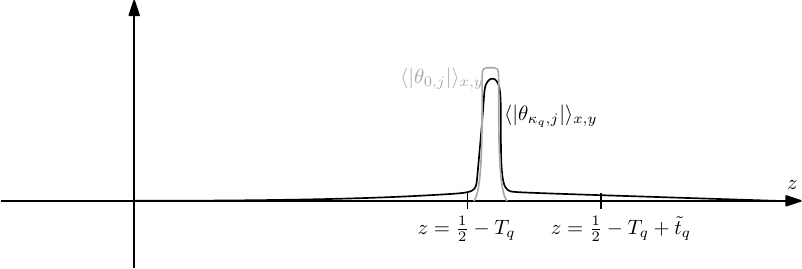}
\caption{We represent the functions 
$\langle |\theta_{0,j} | \rangle_{x,y}  $ and $\langle |\theta_{\kappa_q ,j}\rangle_{x,y}   $ at a precise time ($t= t_{q,j}$ that will be defined in \eqref{d:time_t_j}) depending only on $z$ and representing the stability between the two functions at these times $t_{q,j}$, more precisely $\langle |\theta_{0,j} | \rangle_{x,y}  (z) = \int_{\T^2} | \theta_{0,j} (t_{q,i} , x,y,z)|  dx dy$ and similarly $\langle |\theta_{\kappa_q,j} | \rangle_{x,y}  (z) = \int_{\T^2} | \theta_{\kappa ,j} (t_{q,i} , x,y,z)|  dx dy$.
}
\end{figure}

 In Step 3, we prove that  the $L^2$ norm of $\theta_{\kappa_q, j}$ decays rapidly in a certain time interval. 
 \begin{figure}[htp]
\includegraphics[clip,width=11cm]{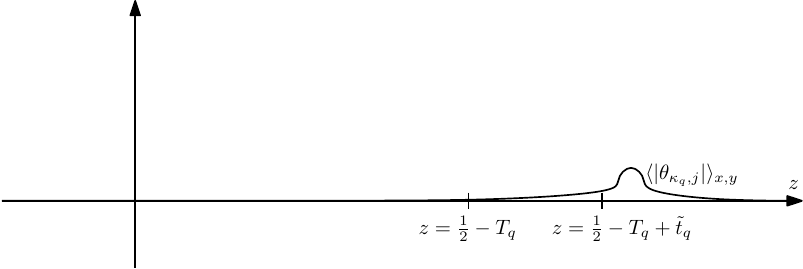}
\caption{The figure represents the phenomena of ``anomalous dissipation'' (loss of most of the $L^2$ norm) of the function $\theta_{\kappa_q, i}$. In the figure we draw the function $\langle |\theta_{\kappa_q, i}| \rangle_{x,y} (z)= \int_{\T^2} | \theta_{0,i} (t_{q,i} + \Tilde{t}_q , x,y,z)|  dx dy $  depending only on $z$, the horizontal axis, at a certain time ($t= t_{q,i} + \tilde{t}_q$ we will define later in \eqref{d:time_t_j}).}
\end{figure}

In Step 4, we use all the informations gathered until now, to prove that the solution to the advection-diffusion equation starting from ${\theta}_{\initial}$ decays significantly from time $t=0$ to time $t=1$, as in \eqref{eq:LTwoNormAlmostZero}. In Step 5, we prove that $\theta_\kappa$ weak{$^{\ast}$} converges to $\theta_0$ in $L^{\infty}((0,1) \times \T^3)$ as $\kappa \to 0$ and  $e(t) = \int_{\T^3} | \theta_0 (t, x) |^2 dx$ is a smooth, non-increasing function such that $e(1) < e(0)$. 
 \begin{figure}[htp]
\includegraphics[clip,width=11cm]{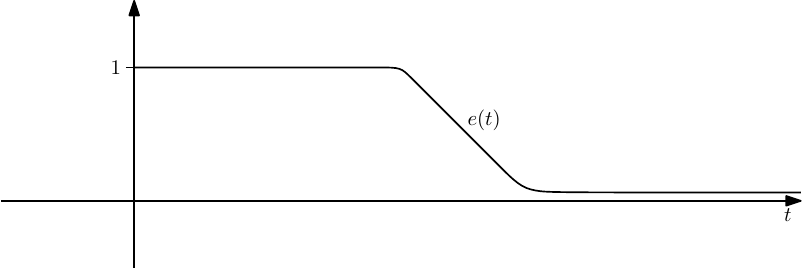}
\caption{The smooth energy profile of the limiting solution: $e(t) = \int_{\T^3} |\theta_{0} (t, x,y,z)|^2 dx dy dz$.}
\end{figure}

 Finally, in the sixth and final step, we prove that $\mu_T$ has a non-trivial absolutely continuous part.
 
\subsection{Partition of unity in the $z$ variable and corresponding time sequence}\label{subsec:PartitionOfUnityInZ}
In this section we introduce a partition of unity (used later for the $z$ variable) and a corresponding time sequence that will be useful to  describe locally in $z$ the solution $\theta_\kappa$.
 
There exists a universal constant $C > 0$ such that for every $q \in \N$ there are a sequences of smooth functions $\{ \chi_j \}_{j=1}^{N_q} \subset C^\infty (\T )$ and a sequence of  points $\{ z_j\}_{j=1}^{N_q} \subset \T$ with the following properties:
 \begin{enumerate}
 \myitem{D1} $0 \leq \chi_j \leq 1$ for all $j = 1,2, \ldots, N_q$;
 \myitem{D2} \label{support:chi_j} $\supp(\chi_j) \subset (z_j, z_j + \sfrac{a_q^{\gamma}}{8})$ for all $j = 1, 2, \ldots, N_q$;
 \myitem{D3} \label{chi:estimate-C-1} $\| \chi_j \|_{C^1(\T)} \leq C a_0^{-1} a_q^{- \gamma}$ for all $j = 1, \ldots, N_q$;
 \myitem{D4} $\sum_{i = 1}^{N_q} \chi_j \equiv 1$ on $\T$;
 \myitem{D5} \label{item:BoundOnTheNumberOfCutoffs} $N_q \leq C a_{q}^{- \gamma}$;
 \myitem{D6} we have
 \begin{equation}\label{eq:AboutAdjacentSupports}
  |\supp(\chi_i) \cap \supp(\chi_j)| \leq a_0 a_q^{\gamma}
 \end{equation} 
 and
 \begin{equation}\label{eq:DisjointnessOfTheSupportsOfCutoffs}
  \inf_{|i-j| >1} \dist \left ( \supp(\chi_{j}) \,, \supp(\chi_i) \right )  \geq \frac{ a_q^{\gamma}}{16} \,.
 \end{equation} 
 \end{enumerate}
 
For future reference, we also define 
\begin{equation} \label{d:time_t_j}
t_{q,j}= \frac{1}{2} - T_q - z_j + \frac{a_q^{\gamma}}{8} \quad \text{and} \quad \tilde{t}_q = 4a_0^{- \eps \delta} a_q^{\frac{\gamma}{1 + \delta} - 10 \eps}.
\end{equation}
Heuristically, $t_{q,j}$ represents the time up to which $\theta_{\kappa_q, j}$ remains close to $\theta_{0, j}$ solution to the advection equation starting from the same initial datum. 
After $t_{q,j}$, $\| \theta_{\kappa_q, j}(t, \cdot) \|_{L^2(\T^3)}$ starts decaying rapidly. In fact, heuristically $\tilde{t}_q$ represents the time needed from $t_{q,j}$ for $\| \theta_{\kappa_q, j}(t, \cdot) \|_{L^2(\T^3)}$ to become small.
\subsection{Proof of Theorem~\ref{t_anomalous_autonomous}}\label{subsec:ProofOfTheoremAWithoutProofOfLemmas}
We point out that the lemmas appearing in this proof are proved in Subsection~\ref{subsec:ProofOfTheLemmasOfProofA}.
Throughout this entire proof , we assume wothout loss of generality that $q$ is sufficiently large in order to have $\tilde{t}_q \leq \overline{t}_q$ thanks to \eqref{item:EpsDeltaThree}.

\textbf{Step 1: Decomposition.}

The purpose of this step is to decompose the solution ${\theta}_{\kappa_q}$ in a suitable way. Using the partition of unity introduced above in the $z$-variable we cut the support of the initial datum ${\theta}_{in}$ into smaller parts. For each $j = 1, 2,  \ldots, N_q$, we define ${\theta}_{\initial, j} \colon \T^3 \to \R$ as ${\theta}_{\initial , j}(x,y,z)  =  {\theta}_{\initial}(x,y,z) \chi_j(z)$.
Then we define ${\theta}_{\kappa_q , j}$ as the solution to the advection-diffusion equation
\begin{equation}
 \left\{
 \begin{array}{ll}
 \partial_t {\theta}_{\kappa_q, j} + u \cdot \nabla {\theta}_{\kappa_q, j} = \kappa_q \Delta {\theta}_{\kappa_q , j}; \\
 {\theta}_{\kappa_q , j} (0,x,y,z) = {\theta}_{\initial , j}(x,y,z). \\
 \end{array}
 \right.
\end{equation}
Thanks to the linearity of the equation we have ${\theta}_{\kappa_q} = \sum_{j = 1}^{N_q} {\theta}_{\kappa_q , j}$, because $\theta_{\initial } = \sum_{j = 1}^{N_q} {\theta}_{\initial , j}$. Accordingly, we also specify that ${\theta}_{0,j } \colon [0,1/2] \times \T^3 \to \R$ defined as ${\theta}_{0,j}(t,x,y,z) = \theta_{\stat }(x,y,z) \varphi_{\initial} (z-t) \chi_j(z - t)$  is the unique solution  to the advection equation (see Lemma \ref{lemma:uniqueness})
\begin{equation} \label{d:theta_0j}
 \left\{
 \begin{array}{ll}
 \partial_t {\theta}_{0,j} + u \cdot \nabla {\theta}_{0,j} = 0; \\
 {\theta}_{0,j} (0,x,y,z) = {\theta}_{\initial, j}(x,y,z). \\
 \end{array}
 \right.
\end{equation}
\\
\textbf{Step 2: $L^2$-stability between ${\theta}_{\kappa_q, j}$ and ${\theta}_{0,j}$ until time $t_{q,j} < 1/2$.}
The goal of this step is to prove that 
\begin{equation}\label{stability:theta_kq-theta}
\| {\theta}_{\kappa_q, j}(t , \cdot ) - {\theta}_{0,j}(t, \cdot ) \|_{L^{2}(\T^3)}^2 \lesssim a_q^{\gamma + \eps} \qquad \text{for all }  t \in [0,t_{q,j}] \quad \text{for all } j =1, 2, \ldots , N_q \,.
\end{equation}
By Lemma~\ref{lemma:AdvectionDiffusionAndTransport}, we get
\begin{equation}\label{eq:StabilityEstimateApplicationInThmA}
 \| {\theta}_{\kappa_q, j}(t, \cdot ) - {\theta}_{0,j}(t, \cdot ) \|_{L^{2}(\T^3)}^2 \leq \kappa_q \int_0^t \| \nabla {\theta}_{0,j} (s, \cdot ) \|_{L^2(\T^3)}^2 \, ds \,,
\end{equation}
for all $t \in [0, t_{q,j}]$.
We will prove the following estimate:
\begin{lemma}\label{lemma:EstimateOnTheGradientOfTheSolutionToTheTransportEquation}
 There exists a universal constant $C > 0$  such that for all $ q \in m \N$ and $j = 1, \ldots, N_q$ we have
 \begin{equation}\label{eq:EstimateOnTheGradientOfTheSolutionToTheTransportEquation}
  \kappa_q \int_0^{t_{q,j}} \| \nabla {\theta}_{0,j} (s, \cdot ) \|_{L^2(\T^3)}^2 \, ds \leq C a_q^{\gamma + \eps}.
 \end{equation}
\end{lemma}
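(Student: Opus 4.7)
Using the explicit formula $\theta_{0,j}(t,x,y,z) = \theta_{\stat}(x,y,z) \varphi_{\initial}(z-t) \chi_j(z-t)$ from Step 1, I would apply the product rule to split $\nabla\theta_{0,j}$ into an $(x,y)$-gradient term $\varphi_{\initial}\chi_j \nabla_{x,y}\theta_{\stat}$ and a $z$-derivative term with three pieces coming from differentiating each of the factors $\theta_{\stat}$, $\varphi_{\initial}(z-t)$, and $\chi_j(z-t)$. The $z$-derivative contributions are controlled by combining $\|\chi_j'\|_{\infty} \lesssim a_0^{-1}a_q^{-\gamma}$, the bound $\|\partial_z\theta_{\stat}\|_{L^\infty(\T^2 \times \mathcal{I}_{q'})} \lesssim a_{q'}^{-\gamma}$ from \eqref{eq:vartheta_stationary}, the $L^\infty$ boundedness of $\theta_{\stat}$ and $\varphi_{\initial}$, and the fact that $|\supp \chi_j| \lesssim a_q^\gamma$; these yield lower-order contributions to the final bound.

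For the main $(x,y)$-gradient part, I would apply Fubini and the change of variables $s = z - t$ to the time integral to obtain
\[
\int_0^{t_{q,j}} \|\nabla_{x,y}\theta_{0,j}(t,\cdot)\|_{L^2(\T^3)}^2 \, dt \lesssim a_q^\gamma \int_{z_j}^{1/2 - T_q + a_q^\gamma/4} \|\nabla_{x,y}\theta_{\stat}(\cdot,\cdot,z)\|_{L^2(\T^2)}^2 \, dz,
\]
exploiting that at each $t \in [0, t_{q,j}]$ the $z$-support of $\chi_j(\cdot - t)$ is contained in $[z_j + t, z_j + t + a_q^\gamma/8]$. The crucial property of the stopping time $t_{q,j}$ defined in \eqref{d:time_t_j} is that the upper endpoint $1/2 - T_q + a_q^\gamma/4$ stays safely below the singular time $z = 1/2$ and only enters a very short initial slice of $\mathcal{I}_q$.

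I would then decompose the $z$-integral according to the strata $\mathcal{I}_{q'}$ for $q' \leq q$ (plus the easy contribution from $[0, 1/2 - T_0]$, where $\nabla_{x,y}\theta_{\stat}$ is essentially frozen at its initial value, which is bounded thanks to \eqref{eq:LInftyNormOfTheGradientOfTheMollifiedChessFunction}). On each $\mathcal{I}_{q'}$ I would apply the bound $\|\nabla_{x,y}\theta_{\stat}\|_{L^\infty(\T^2\times\mathcal{I}_{q'})}^2 \leq C a_{q'+1}^{-2 - 6\varepsilon(1+\delta)}$ from \eqref{eq:vartheta_stationary}, together with the stratum-size estimate $|\mathcal{I}_{q'}| \lesssim a_{q'}^{\gamma - \gamma\delta}$ for $q' \in m\N \setminus \{0\}$ (and $|\mathcal{I}_{q'}| \lesssim a_{q'}^\gamma$ otherwise). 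The fast decay $a_{q'+1} = a_{q'}^{1+\delta}$ makes the sequence of strata contributions dominated by the last term: the partial overlap with $\mathcal{I}_q$, of length $\lesssim a_q^\gamma$, contributing at most $C a_q^\gamma \cdot a_{q+1}^{-2 - 6\varepsilon(1+\delta)}$.

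Combining these pieces and multiplying by $\kappa_q = a_q^{2 - \gamma/(1+\delta) + 10\varepsilon}$, I would verify using the algebraic identity $2\gamma - \gamma/(1+\delta) = \gamma(1 + 2\delta)/(1+\delta)$, the choice $\gamma = \delta/8$, and the parameter restrictions \eqref{item:EpsDeltaOne} and \eqref{item:EpsDeltaThree} that the resulting exponent of $a_q$ is at least $\gamma + \varepsilon$. The main obstacle is bookkeeping the competing powers of $a_q$ coming from (i) the small spatial support (factor $a_q^\gamma$), (ii) the blow-up of $\nabla_{x,y}\theta_{\stat}$ on $\mathcal{I}_q$ (factor $a_{q+1}^{-2}$), and (iii) the smallness of $\kappa_q$. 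The condition $\varepsilon \leq \delta^3/200$ from \eqref{item:EpsDeltaThree} is what guarantees that the $\varepsilon$-corrections appearing both in the gradient estimate \eqref{eq:vartheta_stationary} and in the definition \eqref{d:k_q} of $\kappa_q$ are dominated by the favourable terms arising from the tuning $\gamma = \delta/8$.
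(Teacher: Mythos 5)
Your overall plan matches the paper's proof of Lemma~\ref{lemma:EstimateOnTheGradientOfTheSolutionToTheTransportEquation}: split $\nabla\theta_{0,j}$ into the $(x,y)$-gradient of $\theta_{\stat}$ (with cut-offs as bounded multipliers) and the $z$-derivative contributions, Fubini the time integral against the $z$-support of $\chi_j(\cdot-t)$ to gain a factor $\lesssim a_q^{\gamma}$, stratify the remaining $z$-integral over $[0,\sfrac{1}{2}-T_q+\sfrac{a_q^\gamma}{4}]$ by the intervals $\mathcal{I}_{q'}$, apply \eqref{eq:vartheta_stationary} on each stratum, and finally multiply by $\kappa_q$.

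However, there is a genuine gap in the statement that ``the fast decay $a_{q'+1} = a_{q'}^{1+\delta}$ makes the sequence of strata contributions dominated by the last term.'' The strata with $q'\in m\N$ carry the much larger time length $\overline{t}_{q'} \sim a_{q'}^{\gamma-\gamma\delta}$ rather than $t_{q'}\sim a_{q'}^{\gamma}$, so these terms compete with the final stratum in a nontrivial way. The paper controls the largest such term, at index $q'=q-m$, through the condition \eqref{eq:InequalityForTheConstantM} on $m$ (namely $m-1 \geq 16/\delta^2$), which is what guarantees that $a_{q-m+1}$ raised to the relevant negative power is still dominated by the analogous power of $a_q$. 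Your sketch never invokes \eqref{eq:InequalityForTheConstantM}, and the parameter conditions you do cite, \eqref{item:EpsDeltaOne} and \eqref{item:EpsDeltaThree}, are not the ones doing this work. Also, the algebraic identity $2\gamma - \gamma/(1+\delta) = \gamma(1+2\delta)/(1+\delta)$ you single out does not actually enter the bookkeeping: the relevant cancellation is between the $\gamma/(1+\delta)$ appearing (with opposite signs) in the exponent of $\kappa_q$ and in the strata sum bound $qa_q^{\gamma + \gamma/(1+\delta) - 2(1+3\varepsilon(1+\delta))}$, yielding an exponent $\gamma + \varepsilon(4-6\delta) \geq \gamma + \varepsilon$ for $\delta$ small and absorbing the factor $q$ via $qa_q^{\varepsilon(3-6\delta)}\to 0$. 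To make the argument complete, you need to insert the comparison of the $q'=q-m$ stratum against the final one explicitly, using \eqref{eq:InequalityForTheConstantM}.
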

Hence, we conclude that \eqref{stability:theta_kq-theta} holds.
\\
\textbf{Step 3: Decay of $\| {\theta}_{\kappa_q, j}(t, \cdot) \|_{L^2 (\T^3)}$ in $[t_{q,j}, t_{q,j} + \sfrac{\tilde{t}_q}{2}]$.}
The goal of this step is to prove that
\begin{equation}\label{eq:AutonomousNormalGoalOfStep3}
\| {\theta}_{\kappa_q,j}(t_{q, j} + \sfrac{\tilde{t}_q}{2}, \cdot ) \|_{L^2(\T^3)}^2 \lesssim a_0^{\eps \delta} a_q^{\gamma} \,.
\end{equation}
This step is divided into 4 substeps. In Substep 3.1. we prove that ${\theta}_{\kappa_q, j} (t_{q,j} , \cdot)$ is $a_q$-periodic in the $x,y$-variables up to small $L^2$ errors. Thanks to this high periodicity structure, in the remainder of this step, via an approximation of $\theta_{\kappa_q, j}$, we prove that $\| {\theta}_{\kappa_q , j}(t, \cdot ) \|_{L^2(\T^3)}$ decays in the time interval $[t_{q,j}, t_{q,j} + \sfrac{\tilde{t}_q}{2}]$. In Substep 3.2 we introduce an approximation of $\theta_{\kappa_q, j}$ denoted by $\tilde{\theta}_{\kappa_q, j}$. Subsequently, in Substep 3.3, we prove that $\| \tilde{\theta}_{\kappa_q , j}(t, \cdot ) \|_{L^2(\T^3)}$ decays in $[t_{q,j}, t_{q,j} + \sfrac{\tilde{t}_q}{2}]$. Finally, in Substep 3.4, we use this to show that $\| {\theta}_{\kappa_q , j}(t, \cdot ) \|_{L^2(\T^3)}$ decays in $[t_{q,j}, t_{q,j} + \sfrac{\tilde{t}_q}{2}]$ too.
\\
\textbf{Substep 3.1: Almost periodicity of ${\theta}_{\kappa_q, j}(t_{q,j} , \cdot, \cdot )$.}
By Section~\ref{rmk:AboutAMollifiedVersionOfTheChessFunction} there exists $(\theta_{\text{chess}})_{a_0^{\eps \delta}} \in L^\infty (\T^2)$ such that 
\begin{align*}
 &\int_{\T^2} |{\theta}_{0,j}(t_{q,j},x,y,z) - \varphi_{\initial}( z- t_{q,j}) \chi_j(z - t_{q,j}) (\theta_{\text{chess}})_{a_0^{\eps \delta}}((2a_q)^{-1} x, (2a_q)^{-1} y)|^2 \, dx dy \\
 &= |\varphi_{\initial}( z- t_{q,j}) \chi_j (z - t_{q,j})|^2 \int_{\T^2} |\theta_{\stat}(x,y,z) - (\theta_{\text{chess}})_{a_0^{\eps \delta}}((2a_q)^{-1} x, (2a_q)^{-1} y)|^2 \, dx dy \\
 &\overset{\eqref{eq:L2DifferenceBetweenChessboardAndItsMollification}}{\lesssim }  a_0^{\eps \delta} |\varphi_{\initial}( z- t_{q,j}) \chi_j(z - t_{q,j})|^2
\end{align*}
which implies
\begin{align*}
 \| {\theta}_{0,j}(t_{q,j}, \cdot, \cdot, \cdot) & - \varphi_{\initial}( \cdot - t_{q,j})  \chi_j(\cdot - t_{q, j}) (\theta_{\text{chess}})_{a_0^{\eps \delta}}((2a_q)^{-1} \cdot, (2a_{q})^{-1} \cdot) \|_{L^2(\T^3)}^2 \lesssim  a_0^{\eps \delta} a_q^{\gamma} \| \varphi_{\initial} \|_{L^{\infty}(\T)}^2 \lesssim a_0^{\eps \delta} a_q^{\gamma}\,.
\end{align*}
Thus, adding and subtracting $\theta_{0,j}$ and using \eqref{stability:theta_kq-theta}, we find
\begin{equation}\label{eq:SymmetriesL2Distance}
 \| {\theta}_{\kappa_q ,j}(t_{q,j}, \cdot, \cdot, \cdot) - \varphi_{\initial}( \cdot - t_{q,j})\chi_j(\cdot - t_{q, j}) (\theta_{\text{chess}})_{a_0^{\eps \delta}}((2a_q)^{-1} \cdot, (2a_{q})^{-1} \cdot) \|_{L^2(\T^3)}^2 \lesssim a_{q}^{\gamma + \eps} + a_0^{\eps \delta} a_q^{\gamma} \lesssim a_0^{\eps \delta} a_q^{\gamma} \,.
\end{equation}
 \\
\textbf{Substep 3.2: Approximation of ${\theta}_{\kappa_q, j}$ on $[t_{q,j}, t_{q,j} + \sfrac{\tilde{t}_q}{2}]$.}
In this substep, we study the solution $\tilde{\theta}_{\kappa_q, j} \colon [t_{q,j}, t_{q,j} + \sfrac{\tilde{t}_q}{2}] \times \T^3 \to \R$ to
\begin{equation}\label{eq:FirstApproxPDEFirstNotNS}
\left\{
\begin{array}{l}
\partial_t \tilde{\theta}_{\kappa_q, j} + \partial_z \tilde{\theta}_{\kappa_q ,j} = \kappa_q \Delta \tilde{\theta}_{\kappa_q ,j} \qquad \text{ on } [t_{q,j}, t_{q,j} + \sfrac{\tilde{t}_q}{2}] \times \T^3;
\\
\tilde{\theta}_{\kappa_q , j}(t_{q, j}, x, y, z) = \varphi_{\initial}( z - t_{q,j})\chi_j (z - t_{q,j}) (\theta_{\text{chess}})_{a_0^{\eps \delta}}( (2 a_q)^{-1}  x,  (2 a_q)^{-1}  y);
\end{array}
\right.
\end{equation}
We can characterize the solution to \eqref{eq:FirstApproxPDEFirstNotNS}. Let $f_{\kappa_q,j} \colon [t_{q, j}, t_{q, j} + \sfrac{\tilde{t}_q}{2}] \times \T^2 \to \R$ be the solution to the heat equation
\begin{equation}\label{eq:2DHeatEquation}
\left\{
 \begin{array}{l}
  \partial_t f_{\kappa_q, j} = \kappa_q \Delta f_{\kappa_q, j}  \qquad \text{ on } [t_{q,j}, t_{q,j} + \sfrac{\tilde{t}_{q}}{2}]  \times \T^2;
   \\
  f_{\kappa_q, j}(t_{q,j},x,y) = (\theta_{\text{chess}})_{a_0^{\eps \delta}}( (2 a_q)^{-1}  x,  (2 a_q)^{-1}  y).
 \end{array}
\right.
\end{equation}
and $\psi_{\kappa_q,j} \colon [t_{q,j}, t_{q,j} + \sfrac{\tilde{t}_q}{2}] \times \T \to \R$ be the solution to
\begin{equation}\label{eq:1DAdvectionDiffusionEquationInStep3}
\left\{
 \begin{array}{l}
  \partial_t \psi_{\kappa_q,j} + \partial_z \psi_{\kappa_q,j} = \kappa_q \partial_{zz} \psi_{\kappa_q,j} \qquad \text{on }  [t_{q,j}, t_{q,j} + \sfrac{\tilde{t}_{q}}{2}]  \times \T; \\
  \psi_{\kappa_q,j}(t_{q,j},z) = \varphi_{\initial}(z- t_{q,j}) \chi_j (z-t_{q,j}).
 \end{array}
\right.
\end{equation}
Define $\tilde{\theta}_{\kappa_q, j} \colon [t_{q,j}, t_{q,j} + \sfrac{\tilde{t}_q}{2}] \times \T^3 \to \R$ as $\tilde{\theta}_{\kappa_q, j}(t,x,y,z) = \psi_{\kappa_q,j}(t,z)f_{\kappa_q, j}(t,x,y)$. By standard computations, $\tilde{\theta}_{\kappa_q, j}$ solves \eqref{eq:FirstApproxPDEFirstNotNS}.
We notice that
\[
 \partial_t ({\theta}_{\kappa_q, j} - \tilde{\theta}_{\kappa_q,j}) + u^{(1,2)} \cdot \nabla_{x,y} {\theta}_{\kappa_q, j} + \partial_z
({\theta}_{\kappa_q, j} - \tilde{\theta}_{\kappa_q,j}) = \kappa_q \Delta ({\theta}_{\kappa_q, j} - \tilde{\theta}_{\kappa_q,j}).
\]
Therefore, for any $t_{q,j} \leq t \leq t_{q,j} + \sfrac{\tilde{t}_q}{2}$, multiplying by $(\theta_{\kappa_q, j} - \tilde{\theta}_{\kappa_q , j})$ and integrating in space
\begin{align*}
 \frac{1}{2} \frac{d}{dt} \| {\theta}_{\kappa_q, j} &- \tilde{\theta}_{\kappa_q,j}\|_{L^2(\T^3)}^2 + \frac{\kappa_q}{2} \| \nabla ({\theta}_{\kappa_q, j} - \tilde{\theta}_{\kappa_q,j}) \|_{L^2(\T^2)}^2 
 \\
 &= - \int_{\T^3} (u^{(1,2)} \cdot \nabla_{x,y} {\theta}_{\kappa_q, j}) ({\theta}_{\kappa_q, j} - \tilde{\theta}_{\kappa_q,j}) \, dx dy dz - \frac{\kappa_q}{2} \| \nabla ({\theta}_{\kappa_q, j} - \tilde{\theta}_{\kappa_q,j})  \|_{L^2(\T^3)}^2 \\
 &\leq \| u^{(1,2)} \cdot \nabla_{x,y} \tilde{\theta}_{\kappa_q,j} \|_{L^2(\T^3)} \| {\theta}_{\kappa_q, j} - \tilde{\theta}_{\kappa_q,j} \|_{L^2(\T^3)} - \frac{\kappa_q}{2} \| \nabla ({\theta}_{\kappa_q, j} - \tilde{\theta}_{\kappa_q,j})  \|_{L^2(\T^3)}^2 \\
 &\leq \frac{C_P}{2 \kappa_q} \| u^{(1,2)} \cdot \nabla_{x,y} \tilde{\theta}_{\kappa_q,j} \|_{L^2(\T^3)}^2 + \frac{\kappa_q}{2 C_P} \| {\theta}_{\kappa_q, j} - \tilde{\theta}_{\kappa_q,j} \|_{L^2(\T^3)}^2 - \frac{\kappa_q}{2 C_P} \| {\theta}_{\kappa_q, j} - \tilde{\theta}_{\kappa_q,j} \|_{L^2(\T^3)}^2 \\
 &\lesssim \frac{1}{\kappa_q} \| u^{(1,2)} \cdot \nabla_{x,y} \tilde{\theta}_{\kappa_q,j}\|_{L^2(\T^3)}^2 \,,
\end{align*}
where $C_P$ is the constant in the Poincar\'{e} inequality.
Thus, integrating in time we get
\begin{align}\label{eq:DifferenceTildeHat}
 \| {\theta}_{\kappa_q, j}(t, \cdot ) - \tilde{\theta}_{\kappa_q,j}(t, \cdot ) & \|_{L^2(\T^3)}^2 + 2 \kappa_q \int_{t_{q,j}}^{t} \| \nabla ({\theta}_{\kappa_q, j} - \tilde{\theta}_{\kappa_q, j})(s, \cdot)\|_{L^2(\T^3)}^2 \, ds \notag 
 \\
 & \lesssim \frac{1}{\kappa_q} \int_{t_{q,j}}^{t} \| u^{(1,2)} \cdot \nabla_{x,y} \tilde{\theta}_{\kappa_q,j}(s, \cdot) \|_{L^2(\T^3)}^2 \, ds + \| {\theta}_{\kappa_q, j}(t_{q,j}, \cdot ) - \tilde{\theta}_{\kappa_q,j}(t_{q,j}, \cdot ) \|_{L^2(\T^3)}^2 \,
\end{align}
for any $t_{q,j} \leq t \leq t_{q,j} + \sfrac{\tilde{t}_q}{2}$.
We now estimate for any $s \in [t_{q,j}, t_{q,j} + \sfrac{\tilde{t}_q}{2}]$
\begin{align*}
 \| u^{(1,2)} \cdot \nabla_{x,y} & \tilde{\theta}_{\kappa_q,j}(s, \cdot ) \|_{L^2(\T^3)}^2 
 \\
 &= \int_{\T^3} |u^{(1,2)}(x,y,z) \cdot \nabla_{x,y} \tilde{\theta}_{\kappa_q,j}(s,x,y,z)|^2 \, dx dy dz \\
 &\overset{\eqref{prop:estimate_u}}{=} \int_{\T^3 \cap \{ z \in (1/2 - T_q, 1/2 - T_q + {\overline{t}_q} )^c \} } |u^{(1,2)}(x,y,z) \cdot \psi_{\kappa_q,j}(s,z) \nabla_{x,y} f_{\kappa_q, j} (s,x,y)|^2 \, dx dy dz \\
 &\leq \| u^{(1,2)} \|_{L^{\infty}(\T^3)}^2 \| \psi_{\kappa_q,j}(s,\cdot) \|_{L^{\infty}( (1/2 - T_q, 1/2 - T_q + {\overline{t}_q} )^c )}^2 \| \nabla_{x,y} f_{\kappa_q, j} (s) \|_{L^{2}(\T^2)}^2 
\end{align*}
First we notice that by \eqref{support:chi_j} and \eqref{d:time_t_j} we have that $\supp (\chi_j  (\cdot - t_{q,j}) ) \subset (1/2 - T_q + \sfrac{a_q^\gamma}{8} , 1/2 - T_q + \sfrac{a_q^\gamma}{4} )$.
We now apply Lemma~\ref{lemma:TailEstimates1DAdvectionDiffusion} with $a= 1/2 -T_q + \sfrac{a_q^\gamma}{8}$, $b = 1/2 -T_q + \sfrac{a_q^\gamma}{4}$ and $c = a_q^\gamma /8$ to estimate 
$$\| \psi_{\kappa_q,j}(s,\cdot) \|_{L^{\infty}( (1/2 - T_q, 1/2 - T_q + {\overline{t}_q} )^c )}^2\leq {2} \| \varphi_{\initial } \|_{L^\infty(\T)} \| \chi_j \|_{L^\infty(\T)} \exp \left( - \frac{a_q^{2 \gamma}}{ {256} \kappa_q (s- t_{q,j})} \right)\,$$
for any  $s \in [t_{q,j}, t_{q,j}  + \tilde{t}_q/2]$,
where we used that  
$$ (1/2 - T_q, 1/2 - T_q + {\overline{t}_q} )^c  \subset (1/2 - T_q +s, 1/2 - T_q +\sfrac{a_q^\gamma}{8} + \sfrac{a_q^\gamma}{4} + s )^c \qquad \text{for any } s \in [0, {\sfrac{\overline{t}_q}{2}}] \,.$$
Therefore, by standard estimates of the exponential function, we get
\begin{align*}
\| u^{(1,2)} \|_{L^{\infty}(\T^3)}^2 \| \psi_{\kappa_q,j}(s,\cdot) \|_{L^{\infty}( (1/2 - T_q, 1/2 - T_q + {\overline{t}_q} )^c )}^2 \lesssim \kappa_q^3 \tilde{t}_q^3 a_q^{- 6 \gamma}
\end{align*}
In virtue of the previous computation, \eqref{eq:SymmetriesL2Distance} and \eqref{eq:DifferenceTildeHat}, we get
\begin{align} 
 &\| {\theta}_{\kappa_q, j}(t, \cdot ) - \tilde{\theta}_{\kappa_q,j}(t, \cdot ) \|_{L^2(\T^3)}^2 + 2 \kappa_q \int_{t_{q,j}}^{t} \| \nabla ({\theta}_{\kappa_q, j} - \tilde{\theta}_{\kappa_q, j})(s, \cdot)\|_{L^2(\T^3)}^2 \, ds \notag 
 \\
 &\lesssim a_0^{\varepsilon \delta} a_q^\gamma + \frac{1}{\kappa_q} \kappa_q^3 \tilde{t}_q^3 a_q^{- 6 \gamma} \int_{t_{q,j}}^t \| \nabla_{x,y} f_{\kappa_q, j} (s, \cdot) \|_{L^{2}(\T^2)}^2 \, ds  \label{eq:TildeApproxOfHatL2}
 \\
 & \overset{\eqref{e:energy-equality-global}}{\leq} a_0^{\varepsilon \delta} a_q^\gamma + \frac{1}{2} \kappa_q \tilde{t}_q^3 a_q^{- 6 \gamma} \| f_{\kappa_q, j}(t_{q,j}, \cdot) \|_{L^2(\T^2)}^2 \lesssim a_0^{\varepsilon \delta} a_q^\gamma \,,  \notag
\end{align}
for any
$t \in [t_{q, j}, t_{q, j} + \sfrac{{\overline{t}_q}}{2}]$, where we used \eqref{d:gamma}.
\\
\textbf{Substep 3.3: Dissipation of $\tilde{\theta}_{\kappa_q,j}(t, \cdot )$ on $[t_{q, j}, t_{q, j} + \sfrac{\tilde{t}_q}{2}]$.} Recall $f_{\kappa_q, j}$ the solution to \eqref{eq:2DHeatEquation} and $\psi_{\kappa_q,j}$ the solution to \eqref{eq:1DAdvectionDiffusionEquationInStep3}.
By Lemma~\ref{lemma:EnhancedDiffusion}, 
\begin{equation}
\| f_{\kappa_q, j}(t_{q,j} + \sfrac{\tilde{t}_q}{2}) \|_{L^2(\T^2)}^2 \leq \e^{-  \frac{\kappa_q  \tilde{t}_q}{4 a_q^2}} \leq \e^{- a_0^{- \eps \delta}} \leq a_0^{\eps \delta},
\end{equation}
where we used that  $\frac{\kappa_q  \tilde{t}_q}{4 a_q^2} = a_0^{- \eps \delta}$ (see \eqref{d:time_t_j} and \eqref{d:k_q}).
Then
\begin{equation}\label{eq:DecayForThetaTilde}
\begin{split}
 \| \tilde{\theta}_{\kappa_q, j}(t_{q,j} + \sfrac{\tilde{t}_q}{2}, \cdot ) \|_{L^2(\T^3)}^2 &= \int_{\T^3} |\psi_{\kappa_q,j}(t_{q,j} + \sfrac{\tilde{t}_q}{2},z)|^2 |f_{\kappa_q, j}(t_{q,j} + \sfrac{\tilde{t}_q}{2},x,y)|^2 \, dx dy dz \\
 & \int_{\T} |\psi_{\kappa_q,j}(t_{q,j} + \sfrac{\tilde{t}_q}{2},z)|^2\, dz \int_{\T^2} |f_{\kappa_q, j}(t_{q,j} + \sfrac{\tilde{t}_q}{2},x,y)|^2 \, dx dy 
  \\
 &= \| \psi_{\kappa_q,j}(t_{q,j} + \sfrac{\tilde{t}_q}{2}, \cdot ) \|_{L^2(\T)}^2 \| f_{\kappa_q, j}(t_{q,j} + \sfrac{\tilde{t}_q}{2}) \|_{L^2(\T^2)}^2
  \\
 &\leq \| \psi_{\kappa_q,j}(t_{q,j} + \sfrac{\tilde{t}_q}{2}, \cdot ) \|_{L^2(\T)}^2 a_0^{\eps \delta} \leq a_0^{\eps \delta} a_q^{\gamma}.
\end{split}
\end{equation}
\\
\textbf{Substep 3.4: Dissipation of ${\theta}_{\kappa_q,j}(t, \cdot )$  on $[t_{q, j}, t_{q, j} + \sfrac{\tilde{t}_q}{2}]$.}
 By  \eqref{eq:TildeApproxOfHatL2} and \eqref{eq:DecayForThetaTilde} we have
\begin{align} \label{eq:dissipation-theta_kappa-j}
 \| {\theta}_{\kappa_q,j}(t_{q, j} + \sfrac{\tilde{t}_q}{2}) \|_{L^2(\T^3)}^2 &\leq 2 \| {\theta}_{\kappa_q,j}(t_{q, j} + \sfrac{\tilde{t}_q}{2}) - \tilde{\theta}_{\kappa_q, j}(t_{q, j} + \sfrac{\tilde{t}_q}{2}) \|_{L^2(\T^3)}^2  +
  2 \| \tilde{\theta}_{\kappa_q, j}(t_{q, j} + \sfrac{\tilde{t}_q}{2}) \|_{L^2(\T^3)}^2 
 \notag
 \\
 &\lesssim a_0^{\eps \delta} a_q^{\gamma}  + a_0^{\eps \delta} a_q^{\gamma} \lesssim a_0^{\eps \delta} a_q^{\gamma}
\end{align}
for $q \in m \N$.
\\

\textbf{Step 4: Decay of  $\| {\theta}_{\kappa_q} (t, \cdot) \|_{L^2 (\T^3)}$ for $ t \in (0,1)$.}
In this step, we prove \eqref{eq:LTwoNormAlmostZero}.
For each $j = 0, \ldots, N_q$, we define the sets $\tilde A_{q,j} \subset \T$ as
\[ 
\tilde A_{q,j} = \supp (\chi_j (1- \cdot )) \subset [1+ z_j , 1+ z_j + a_q^\gamma] \simeq_{\T }  [ z_j ,  z_j + \sfrac{a_q^\gamma}{8}]
\]
and the $\frac{a_q^\gamma}{32}$-neighbourhood 
$$A_{q,j} = \left \{ z \in \T : \dist (z, \tilde A_{q,j}) < \frac{a_q^\gamma}{32} \right \}\,. $$

By Corollary~\ref{lemma:TailEstimates3DAdvectionDiffusion}
\begin{equation}\label{eq:ApplicationOfTailEstimatesInThmA}
 \| {\theta}_{\kappa_q,j}(1 , \cdot ) \|_{L^{\infty}(\T^2 \times A_{q,j}^c )} \leq {4} \exp \left( - \frac{(\sfrac{ a_q^{\gamma}}{32})^2}{{4} \kappa_q } \right) = {4} \exp \left( - \frac{ a_q^{2 \gamma}}{{4} \cdot 32^2 \kappa_q} \right). \\
\end{equation}
This will allow us to conclude the argument later. Now, we observe that
\begin{equation}\label{eq:L2NormOfTheFullSolutionFirstIneq}
 \| {\theta}_{\kappa_q}(1, \cdot ) \|_{L^2(\T^3)}^2 = \left\| \sum_{j = 0}^{N_q} {\theta}_{\kappa_q,j}(1, \cdot ) \right\|_{L^2(\T^3)}^2 \leq 2 \left\| \sum_{j \text{ odd}} {\theta}_{\kappa_q,j}(1, \cdot ) \right\|_{L^2(\T^3)}^2 + 2 \left\| \sum_{j \text{ even}} {\theta}_{\kappa_q,j}(1, \cdot ) \right\|_{L^2(\T^3)}^2
\end{equation}
and
\begin{align*}
 \left\| \sum_{j \text{ odd}} {\theta}_{\kappa_q,j}(1, \cdot) \right\|_{L^2(\T^3)}^2 &= \left\| \sum_{j \text{ odd}} {\theta}_{\kappa_q,j}(1, \cdot ) \mathbbm 1_{\T^2 \times A_{q,j}} (\cdot ) + \sum_{j \text{ odd}} {\theta}_{\kappa_q,j}(1, \cdot ) \mathbbm 1_{\T^2 \times A_{q,j}^c} (\cdot ) \right\|_{L^2(\T^3)}^2 \\
 &\leq 2 \left\| \sum_{j \text{ odd}} {\theta}_{\kappa_q,j}(1, \cdot ) \mathbbm 1_{\T^2 \times A_{q,j}} (\cdot ) \right\|_{L^2(\T^3)}^2 + 2 \left\| \sum_{j \text{ odd}} {\theta}_{\kappa_q,j}(1, \cdot) \mathbbm 1_{\T^2 \times A_{q,j}^c} (\cdot) \right\|_{L^2(\T^3)}^2.
\end{align*}
The collection of open sets $\{ A_{q,j} \}_{ \{ j  \ \text{odd}  \} }$ is a collection of mutually disjoint sets due to \eqref{eq:DisjointnessOfTheSupportsOfCutoffs}. This follows from the definition of the quantities $z_j$. Therefore
\begin{align*}
\left\| \sum_{j \text{ odd}} {\theta}_{\kappa_q,j}(1, \cdot ) 1_{\T^2 \times A_{q,j}} (\cdot ) \right\|_{L^2(\T^3)}^2 &= \sum_{j \text{ odd}} \left\| {\theta}_{\kappa_q,j}(1, \cdot ) 1_{\T^2 \times A_{q,j}} \right\|_{L^2(\T^3)}^2 \leq \sum_{j \text{ odd}} \left\| {\theta}_{\kappa_q,j}(1, \cdot ) \right\|_{L^2(\T^3)}^2 \\
&\leq \sum_{j \text{ odd}} \left\| {\theta}_{\kappa_q,j}(t_{q, j} + \sfrac{\overline{t}_q}{2}, \cdot ) \right\|_{L^2(\T^3)}^2 \lesssim N_q a_0^{\eps \delta} a_q^{\gamma} \lesssim a_0^{\eps \delta}.\\
\end{align*}
By \eqref{eq:ApplicationOfTailEstimatesInThmA},
\begin{align*}
\left\| \sum_{j \text{ odd}} {\theta}_{\kappa_q,j}(1, \cdot ) \mathbbm 1_{\T^2 \times A_{q,j}^c} (\cdot ) \right\|_{L^2(\T^3)} &\leq \left\| \sum_{j \text{ odd}} {\theta}_{\kappa_q,j}(1, \cdot ) 1_{\T^2 \times A_{q,j}^c} \right\|_{L^{\infty}(\T^3)} \leq \sum_{j \text{ odd}} \left\| {\theta}_{\kappa_q,j}(1, \cdot ) \right\|_{L^{\infty}(\T^2 \times A_{q,i}^c)} \\
&\leq \sum_{j \text{ odd}} {4} \exp \left( - \frac{ a_q^{2 \gamma}}{{4} \cdot 32^2 \kappa_q} \right)
 \lesssim \sum_{j \text{ odd}}  \frac{{4} \cdot 32^2 \kappa_q}{ a_q^{2 \gamma}} \lesssim a_q^{2 - 3\gamma - \frac{\gamma}{1 + \delta} + 10 \eps}  \leq a_q \,.
\end{align*}
Thus, $\left\| \sum_{j \text{ odd}} {\theta}_{\kappa_q,j}(1, \cdot ) \right\|_{L^2(\T^3)}^2 \lesssim a_0^{\eps \delta} + a_q \lesssim a_0^{\eps \delta}$.
The same bound can be proved for the last term in \eqref{eq:L2NormOfTheFullSolutionFirstIneq}. Therefore, we conclude \eqref{eq:LTwoNormAlmostZero}. 

\noindent \textbf{Step 5: Smoothness of the energy $e$ and $L^\infty_{t} L^2_x$ closeness.}
We observe that, by Lemma~\ref{lemma:uniqueness}, there exists a unique {bounded} solution $\theta_0 \in L^\infty ((0,1) \times \T^3)$ of the advection equation, with velocity field $u: \T^3 \to \R^3$ and initial datum $\theta_{\initial} \in L^\infty (\T^3)$ defined in \eqref{eq:initialdatum_theta}. We observe also that any $L^\infty ((0,1) \times \T^3)$-weakly* converging subsequence of $ \{ \theta_{\kappa} \}_{\kappa >0}$ (solutions of the advection diffusion equation with initial datum $\theta_{\initial}$) converges to a solution of the advection equation with initial datum $\theta_{\initial}$. Therefore we conclude that $\theta_\kappa \overset{*}{\rightharpoonup} \theta_0$, as $\kappa \to 0$.
By definition of $\theta_0$ in Subsection~\ref{subsec:ConstructionInitialDatum}, it is clear that 
$$e(t) =\int_{\T^3} | \theta_{0} (t, x) |^2 dx $$
is smooth in {$[0,1/2)$}.   Thanks to Lemma \ref{lemma:uniqueness} we can uniquely find $\theta_0 : [0,1 ] \times \T^3 \to \R$ a solution to the advection equation with velocity field $u$ and initial datum $\theta_{\initial}$. We also notice that, thanks to $u^{(3)} \equiv 1$ and $u (x,y,z) \equiv (0,0,1)$ for $z \geq 1/2$ and $\supp (\theta_{\initial}) \subset  \T^2 \times (0, 1/4) $ (see \eqref{eq:initialdatum_theta}), there exists $\tau >0$ such that $\int_{\T^3} | \theta_0 (t,x) |^2 dx$ is constant in {$(1/2 - \tau , 1]$}, therefore $e(t) = \int_{\T^3} | \theta_{0} (t, x) |^2 dx$   is smooth in {$[0,1]$}.

Finally, we need to prove that for any $\kappa_q $ such that $q \in m \N$ we have the $L^\infty_{t} L^2_x$ closeness of $\theta_{\kappa_q}$ to $\theta_0$, more precisely we claim that there exists a universal constant $C>0$ such that 
\begin{equation}\label{eq:L2ClosenessInTermsOfLimsup}
\limsup_{q \in m\N, \, q \to \infty} \| \theta_{\kappa_q} - \theta_0 \|_{L^\infty ((0,1); L^2(\T^3))}^2 \leq C a_0^{\epsilon \delta}.
\end{equation}
To prove the inequality above we estimate

\begin{align}
\begin{split}\label{eq:ComputationInOrderToComputeL-Inf-L-2}
    \int_{\T^3} |\theta_{\kappa_q} (x ,t) & -\theta_0 (x ,t)|^2 dx   
     =  \int_{\T^3} \left| \sum_{j=0}^{N_q} (\theta_{k_q, j} - \theta_{0,j}) \right|^2
    \\
    & \leq   \underbrace{\sum_{j=0 }^{N_q} \int_{\T^3} |\theta_{\kappa_q, j} (x ,t) -\theta_{0,j} (x ,t)|^2 }_{= I (t)} + \underbrace{ \sum_{i \neq j}^{N_q} \int_{\T^3} |\theta_{\kappa_q, j} (x ,t) -\theta_{0,j} (x ,t)| |\theta_{\kappa_q, i} (x ,t) -\theta_{0,i} (x ,t)|}_{= II(t)}
\end{split}
\end{align}
We start by estimating $\| II \|_{L^{\infty}((0,1))}$.
Using that $| \theta_{\kappa_q, j} (x,y,z, t)| \leq \psi_{\kappa_q, j} (z,t ) $ and $|\theta_{0,j} (x,y,z,t)| \leq \chi_{j} (z-t) $ for any $(x,y, z) \in \T^3$ and $t \in (0,1)$,
we estimate $II(t)$ 
\begin{align*}
    II(t) & \leq \sum_{i \neq j}^{N_q} \int_{\T^3} (\psi_{\kappa_q , j} (z,t) + \chi_{j} (z- t)) (\psi_{\kappa_q , i} (z,t) + \chi_{i} (z- t)).
\end{align*}
Let $S_{q,i}(t)$ be the time-dependent interval defined by $S_{q,i}(t) = (z_i - a_0 a_q^{\gamma} + t, z_i + \sfrac{a_q^{\gamma}}{8} + a_0 a_q^{\gamma} + t)$. By Lemma~\ref{lemma:TailEstimates1DAdvectionDiffusion}, we find that for all $t \in [0,1]$
\begin{equation}\label{eq:PsiOutsideTheSetS-q-i}
 \| \psi_{\kappa_q, i}(t, \cdot) \|_{L^{\infty}(S_{q,i}(t)^c)} \lesssim \exp\left( \frac{(a_0 a_q^{\gamma})^2}{{4} \kappa_q t} \right) \leq \frac{{4} \kappa_q t}{a_0^2 a_q^{2 \gamma}} \lesssim \frac{a_q}{a_0^2} \lesssimlarge a_0 a_q^{1/2}.
\end{equation}
Note that due to \eqref{eq:AboutAdjacentSupports} and \eqref{eq:DisjointnessOfTheSupportsOfCutoffs}, we have
\begin{equation}\label{eq:AboutHowTheSetsS-q-i-Overlap}
S_{q,i}(t) \cap S_{q,j}(t) = \emptyset \text{ whenever $|i-j| > 1$ and } |S_{q,i}(t) \cap S_{q,i + 1}(t)| \leq 3 a_0 a_q^{\gamma} \text{ for all $t \in [0,1]$.}
\end{equation}
Therefore, for all $t \in [0,1]$, we have
\begin{align*}\label{eq:AboutTwoAdjacentPsis}
\begin{split}
 \int_{\T} |\psi_{\kappa_q, i}| |\psi_{\kappa_q, i+1}|(t,z) \, dz &\leq \int_{S_{q,i}(t) \cap S_{q,i+1}(t)} |\psi_{\kappa_q, i}| |\psi_{\kappa_q, i+1}|(t,z) \, dz 
 \\
 & \quad + \int_{\T \setminus (S_{q,i}(t) \cap S_{q,i+1}(t))} |\psi_{\kappa_q, i}| |\psi_{\kappa_q, i+1}|(t,z) \, dz 
 \lesssimlarge a_0 a_q^{\gamma} + a_0 a_q^{\gamma} \lesssimlarge a_0 a_q^{\gamma}.
\end{split}
\end{align*} 
Also notice that 
$\supp (\chi_{j} (\cdot - t)) \subset S_{q,j} (t)$ for any $t \in (0,1)$ and $j =0,\ldots,N_q$ from which we get (using the convention that $\psi_{\kappa_q, N_q + 1} = \psi_{\kappa_q, 1}$ and $\chi_{N_q + 1} = \chi_1$)
\begin{align*}
    II(t) & \leq    \sum_{|i- j| > 1 }^{N_q} \int_{\T^3} (\psi_{\kappa_q , j} (z,t) + \chi_{j} (z- t)) (\psi_{\kappa_q , i} (z,t) + \chi_{i} (z- t))
    \\
    & \quad + \sum_{j=0}^{N_q} \int_{\T^3} (\psi_{\kappa_q , j} (z,t) + \chi_{j} (z- t)) (\psi_{\kappa_q , j+1} (z,t) + \chi_{j+1} (z- t))
    \\
    & \lesssimlarge  N_q^2 a_0^2 a_q + N_q a_0 a_q^{\gamma} \lesssimlarge a_0
\end{align*}
where we used \eqref{item:BoundOnTheNumberOfCutoffs}, \eqref{eq:PsiOutsideTheSetS-q-i} and \eqref{eq:AboutHowTheSetsS-q-i-Overlap}. Hence $\| II \|_{L^{\infty}((0,1))} \lesssimlarge a_0$.
\\
Now, we estimate $\| I \|_{L^{\infty}((0,1))}$.
Splitting $I$ into 3 parts and using the $L^\infty$ bound on $\theta_{\kappa_q}$ yields
\begin{align*}
    I(t)= \sum_{j=0}^{N_q} \int_{\T^3} |\theta_{\kappa_q,j} (x ,t) -\theta_{0,j} (x ,t)|^2 dx  
    & \leq  10 T_q + \underbrace{ \sum_{j=0}^{N_q} \int_{\T^2 \times \{ z < 1/2 - T_q \}} |\theta_{\kappa_q,j} (x,t) - \theta_{0,j} (x,t)|^2 dx }_{= I_1(t)} 
    \\
    & \quad  + \underbrace{ \sum_{j=0}^{N_q}  \int_{\T^2 \times \{ z > 1/2\}} |\theta_{\kappa_q,j} (x,t) - \theta_{0,j} (x,t)|^2 dx  }_{= I_2 (t)}.
\end{align*}
Observing that $T_q \to 0$ as $q \to \infty$, we only have to estimate $\| I_1 \|_{L^{\infty}((0,1))}$ and $\| I_2 \|_{L^{\infty}((0,1))}$.
We now estimate each one of the terms in $I_1$ given by
\[
 I_{1, j}(t) = \int_{\T^2 \times \{ z < 1/2 - T_q \}} |\theta_{\kappa_q,j} (x,t) - \theta_{0,j} (x,t)|^2 dx \text{ for $j = 1, \ldots, N_q$.}
\] 
If $t < t_{q,j}$ (defined in \eqref{d:time_t_j}), we use \eqref{stability:theta_kq-theta} and \eqref{item:BoundOnTheNumberOfCutoffs} to get that $|I_{1,j}(t)| \lesssim a_q^{\gamma + \eps}$.
If $t \geq t_{q,j}$, then by definition of $t_{q,j}$ and $\theta_{0,j}$ (as in \eqref{d:theta_0j}) and the fact that the third component of $u$ is constantly $1$ we have that $\supp (\theta_{0,j} (t, \cdot )) \cap \{ (x,y,z ) \in \T^3 : z < 1/2 - T_q \} = \emptyset$. Furthermore, by defining $\tilde{\psi}_{\kappa, j} : [0,1] \times \T \to \R$
\begin{equation}
\left\{
 \begin{array}{l}
  \partial_t \tilde{\psi}_{\kappa_q,j} + \partial_z \tilde{\psi}_{\kappa_q,j} = \kappa_q \partial_{zz} \tilde{\psi}_{\kappa_q,j} \text{ on } [0,1 ] \times \T
  \\
  \tilde{\psi}_{\kappa_q,j}(0,z) = \varphi_{\initial}(z) \chi_j (z).
 \end{array}
\right.
\end{equation}
we have by comparison principle that  $\theta_{\kappa, j} (t,x,y,z) \leq  \tilde{\psi}_{\kappa, j} (t,z)$ for any $(t,x,y,z ) \in [0,1] \times \T^3$ and using Lemma \ref{lemma:TailEstimates1DAdvectionDiffusion} with the set $A(t) = (z_j + t - \sfrac{a_q^{\gamma}}{8} , z_j +t + 2 \sfrac{a_q^{\gamma}}{8})$ we have
\[
\| \theta_{\kappa_q, j} (t, \cdot ) \|_{L^\infty (\T^2 \times \{ z < 1/2 - T_q\})} \leq  \| \tilde{\psi}_{\kappa_q, j} (t, \cdot ) \|_{L^\infty (A(t)^c)} \lesssim a_q^{\gamma + \epsilon}
\]
for any $t \geq t_{q,j}$, where we used 
the definition of $\kappa_q$ \eqref{d:k_q}, $t_{q,j}$ \eqref{d:time_t_j}, $z_j$ for the supports of $\chi_j$ \eqref{support:chi_j} and standard estimates on the exponential function. Therefore, we have $\| I_{1,j} \|_{L^{\infty}((0,1))} \lesssim a_q^{\gamma + \eps}$. Hence
$$ \| I_1 \|_{L^{\infty}((0,1))}  \leq \sum_{j = 1}^{N_q} \| I_{1,j} \|_{L^{\infty}((0,1))} \leq N_q a_q^{\gamma + \eps} \lesssim a_q^{\eps}.$$
We now estimate each one of the terms of $I_2$ given by
\[
 I_{2, j}(t) = \int_{\T^2 \times \{ z > 1/2\}} |\theta_{\kappa_q,j} (x,t) - \theta_{0,j} (x,t)|^2 dx.
\]
It is straightforward to see that
$$ |I_{2, j}(t)| \leq 
\underbrace{
\int_{\T^2 \times \{ z > 1/2\}} 2 |\theta_{\kappa_q,j} (x,t)|^2  dx }_{= I_{2,j}^{\prime}(t)} +
\underbrace{\int_{\T^2 \times \{ z > 1/2\}} 2   
| \theta_{0,j} (x,t)|^2 dx}_{= I_{2,j}^{\prime \prime}(t)} \,.
$$
If $t < t_{q,j} + \overline{t}_q$, then by Lemma~\ref{lemma:TailEstimates1DAdvectionDiffusion} with $\tilde{\psi}_{\kappa, j}$ and $A(t)$ as before we have
$$ I_{2,j}^{\prime}(t) \leq 2 \| \theta_{\kappa , j} (t, \cdot ) \|_{L^\infty (\T^2 \times \{ z >1/2 \})}^2 \lesssim \| \tilde{\psi}_{\kappa, j} (t, \cdot ) \|_{L^\infty (A(t)^c)}^2 \lesssim a_q^{\gamma + \epsilon}.$$
If $t \geq t_{q,j} + \overline{t}_q$, using \eqref{eq:dissipation-theta_kappa-j} we have 
$$ I_{2,j}^{\prime}(t) \leq \| \theta_{\kappa, j} (t, \cdot) \|_{L^2}^2 \leq \| \theta_{\kappa, j} (t_{q,j} + \overline{t}_q, \cdot) \|_{L^2}^2 \leq a_0^{\epsilon \delta } a_q^{\gamma}.$$
for any $q \in m \N$. Hence $\| I_{2,j}^{\prime} \|_{L^{\infty}((0,1))} \lesssim a_0^{\eps \delta} a_q^{\gamma}$.
Using \eqref{item:dissipation-theta_0} we have that 
$$ \| I_{2,j}^{\prime \prime} \|_{L^{\infty}((0,1))} \lesssim a_0^{\epsilon \delta} a_q^{\gamma}.$$
As a consequence 
$$\| I_2 \|_{L^{\infty}((0,1))} \leq \sum_{j = 1}^{N_q} \| I_{2,j}^{\prime} \|_{L^{\infty}((0,1))} + \sum_{j = 1}^{N_q} \| I_{2,j}^{\prime \prime} \|_{L^{\infty}((0,1))} \lesssim a_0^{\eps \delta}.$$
Thus, we conclude $\| I \|_{L^{\infty}((0,1))} \leq \| I_1 \|_{L^{\infty}((0,1))} + \| I_2 \|_{L^{\infty}((0,1))} \lesssim a_0^{\eps \delta}$. Due to \eqref{eq:ComputationInOrderToComputeL-Inf-L-2}, we conclude that \eqref{eq:L2ClosenessInTermsOfLimsup} holds, as wished.
 Thanks to \eqref{eq:L2ClosenessInTermsOfLimsup}, we proved that there exists $f_{\err} \in L^\infty ((0,1) \times \T^3)$ such that $\| f_{\err} \|_{L^\infty ((0,1); L^2 (\T^3))}^2 \leq C a_0^{\epsilon \delta }$ and
 $$  |\theta_{\kappa_q}|^2 \rightharpoonup^*   |\theta_0|^2 +  f_{\err}$$
 weakly* in $L^\infty$, up to subsequences. Indeed 
 \begin{align*}
     \int^{1}_0   \int_{ \T^3 } \left | | \theta_{\kappa_q}|^2 - |\theta_0|^2 \right |^2 \leq ( \| \theta_{\kappa_q} \|_{L^\infty}
     + \| \theta_0 \|_{L^\infty} )^2 \int_0^1 \int_{\T^3} |\theta_{\kappa_q} - \theta_0 |^2 \leq C \| \theta_{\kappa_q} - \theta_0 \|_{L^\infty((0,1); L^2(\T^3))}^2 \,.
 \end{align*}
   We notice that, thanks to the energy balance we have that $\{ \kappa_{q} | \nabla \theta_{\kappa_{q}} |^2\}_{q \in m \N}$ is uniformly bounded in $L^1((0,1) \times \T^3)$, and hence there exists $\mu \in \mathcal{M}([0,1] \times \T^3)$ such that, up to non relabelled subsequences we have 
$$ \kappa_{q} | \nabla \theta_{\kappa_{q}} |^2 \weak \mu \geq 0 \,,$$
in the sense of measure as $q \to \infty$ and we define $\mu_T \in \mathcal{M}(0,1)$ as $\mu_T  = \pi_{\#} \mu $. 
 Therefore, passing into the limit in the sense of distributions as $\kappa_q \to 0$  (along the sequence $\{ \kappa_{q} \}_{q \in m \N} $)  the distributional identity of the local energy equality of Lemma \ref{lemma:local-energy}  we have the following identity in the sense of distribution
 $$ \frac{1}{2} \partial_t |\theta_0|^2 + \frac{1}{2} \partial_t f_{\err} + \frac{1}{2} \diver (u |\theta_0|^2) + \frac{1}{2} \diver (u f_{\err}) + \mu =0 \,,$$
from which, integrating in space, we get  the $H^{-1}_{t}$ closeness 
 $$ \left \| \partial_t \frac{1}{2} \int_{\T^3} |\theta_0|^2  + \mu_T \right \|_{H^{-1}_{t}} = \left \| \frac{1}{2} \partial_t \int_{\T^3} f_{\err}  \right \|_{H^{-1}_{t}} \leq C a_0^{\epsilon \delta } < \beta \,,$$
where
 $\mu_T  = \pi_{\#} \mu $,
  from which we get  \eqref{e:closeness-H-1} up to choose $a_0$ sufficiently small in terms of $\beta$ in  Subsection \ref{subsec:ParmeterDefinitions}.

\noindent \textbf{Step 6: Convergence behaviour of $-\frac{1}{2}  e_{\kappa_q}^{\prime}$ for $q \in m \N$.}
All lemmas stated in the proof will be proved at the end of Step 6.
In order to finish the proof of the theorem, we examine the behaviour of 
\[
-\frac{1}{2} e_{\kappa_q}^{\prime}(t) =  \kappa_q \int_{\T^3} |\nabla \theta_{\kappa_q}(t,x)|^2 \, dx.
\]
As observed above, by the energy balance it is clear that up to subsequences $\{ -\frac{1}{2} e_{\kappa_q}^{\prime} \}_{q  \in m \N}$ weak-$\ast$ converges to a non-negative measure $\mu_T \geq 0$. This measure can be decomposed into an absolutely continuous part and a singular part. The goal of this step is to show that the total variation of the singular part is small compared to the one of the absolutely continuous part. We define $A_{\kappa_q} \colon [0,1] \to \R$ as
\[
 A_{\kappa_q}(t) \coloneqq  \kappa_q \sum_{j = 1}^{N_q} \int_{\T^3} |\nabla \tilde{\theta}_{\kappa_q, j}(t,x)|^2 \mathbbm{1}_{[t_{q,j}, t_{q,j} + \tilde{t}_q]} (t) \, dx
\]
and $S_{\kappa_q} \colon [0,1] \to \R$ as $S_{\kappa_q}(t) = -\frac{1}{2} e_{\kappa_q}^{\prime}(t) - A_{\kappa_q}(t)$. We will show that 
\begin{equation}\label{eq:TheTwoPropertiesThatWeWant}
\limsup_{q \to \infty} \| S_{\kappa_q} \|_{L^1((0,1))} \lesssim a_0^{\frac{\eps \delta}{2}}\,, \quad  \sup_{q} \| A_{\kappa_q} \|_{L^{\infty}((0,1))} < \infty \,, \quad \| \mu_T \|_{TV} \geq 1/4 \,.
\end{equation}
This proves that up to subsequences $A_{\kappa_q}$ weak-$\ast$ converges in $L^{\infty}$ to some $\mathcal{A} \in L^{\infty}((0,1))$ and $S_{\kappa_q}$ weak-$\ast$ converges in the sense of measures to $\mathcal{S} \in \mathcal{M}((0,1))$ (which potentially has a singular part) such that $\| \mathcal{S} \|_{TV} \lesssim a_0^{\varepsilon \delta /2} < \beta < 1/4$, by choosing $a_0$ sufficiently small depending on $\beta$ and universal constants which proves that the absolutely continuous part of the measure $\mu_T = \mathcal{A} + \mathcal{S}$ is non-trivial. The proof is divided into 4 substeps. The first 3 substeps are dedicated to prove, via a sequence of approximations that $S_{\kappa_q} = -\frac{1}{2} e_{\kappa_q}^{\prime} - A_{\kappa_q}$ is uniformly small in $L^1$. In the last step, we prove that $\sup_{q} \| A_{\kappa_q} \|_{L^{\infty}((0,1))} < \infty$ and $\| \mu_T \|_{TV} \geq 1/4$. \\
\textbf{Substep 6.1:}
We have
\[
 -\frac{1}{2} e_{\kappa_q}^{\prime}(t) = \underbrace{ \kappa_q \sum_{i = 1}^{N_q} \int_{\T^3} |\nabla \theta_{\kappa_q, i}(t,x)|^2 \, dx}_{= A_{\kappa_q}^{(1)}(t)} + \underbrace{ \kappa_q \sum_{i,j = 1, i \neq j}^{N_q} \int_{\T^3} \nabla \theta_{\kappa_q, i}(t,x) \cdot \nabla \theta_{\kappa_q, j}(t,x) \, dx}_{= S_{\kappa_q}^{(1)}(t)}.
\]
The goal of this substep is to show that $\| S_{\kappa_q}^{(1)} \|_{L^1((0,1))} \lesssimlarge a_0^{\eps \delta}$. Decompose $S_{\kappa_q}^{(1)}$ into two parts
\[
 S_{\kappa_q}^{(1)}(t) =  \underbrace{\kappa_q \sum_{i = 1}^{N_q} \int_{\T^3} \nabla \theta_{\kappa_q, i}(t,x) \cdot \nabla \theta_{\kappa_q, i+1}(t,x) \, dx}_{= I(t)} + \underbrace{ \kappa_q \sum_{|i - j| > 1} \int_{\T^3} \nabla \theta_{\kappa_q, i}(t,x) \cdot \nabla \theta_{\kappa_q, j}(t,x) \, dx}_{= II(t)}
\]
with the convention that $\theta_{\kappa_q, N_q+1}(t,x) = \theta_{\kappa_q, 1}(t,x)$. 
We will prove the two following results at the end of this section:
\begin{lemma}\label{lemma:EstimateOfQuantitiy-II}
There exists a constant $Q \in \N$ and a  universal constant $C>0$ such that for all $q \geq Q$ we have
\begin{equation}\label{eq:EstimateOfQuantitiy-II}
 \| II \|_{L^1((0,1))} \leq C a_q^{\sfrac{1}{8}}.
\end{equation}
\end{lemma}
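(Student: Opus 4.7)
The plan is to exploit the super-exponential decay of the diffusive tails of $\theta_{\kappa_q, j}$ together with the $z$-separation of the supports of $\chi_i$ and $\chi_j$ for $|i-j|>1$. Fix $c = a_q^{\gamma}/64$ and define the moving strips $\Omega_j(t) = \T^2 \times (z_j - c + t,\, z_j + a_q^{\gamma}/8 + c + t)$ together with their shrinkings $\Omega_j'(t)$ obtained by replacing $c$ with $c/2$. By \eqref{eq:DisjointnessOfTheSupportsOfCutoffs}, the strips $\Omega_i(t)$ and $\Omega_j(t)$ are disjoint with gap at least $a_q^{\gamma}/32$ whenever $|i-j|>1$. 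Corollary~\ref{lemma:TailEstimates3DAdvectionDiffusion} combined with \eqref{d:k_q} and the smallness of $\gamma, \varepsilon$ yields
\[
\|\theta_{\kappa_q, j}(t, \cdot)\|_{L^{\infty}(\Omega_j'(t)^c)} \lesssim \exp\Bigl(-\tfrac{c^2}{32 \kappa_q}\Bigr) \leq \exp(-a_q^{-\nu})
\]
for some fixed $\nu > 0$, uniformly in $t \in (0,1]$ and for $q$ large.

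The key step is then to upgrade this pointwise tail estimate to an integrated $L^2$ bound on $\nabla \theta_{\kappa_q, j}$ outside $\Omega_j(t)$ via a Caccioppoli inequality. Define the transported cutoff $\eta_j(t, x, y, z) = \tilde{\eta}_j(z - t)$, where $\tilde{\eta}_j \in C^\infty(\T; [0,1])$ equals $1$ outside the $z$-projection of $\Omega_j(0)$, vanishes on the $z$-projection of $\Omega_j'(0)$, and satisfies $\|\tilde{\eta}_j'\|_\infty \lesssim a_q^{-\gamma}$. Crucially, since $u^{(3)} \equiv 1$ and $\eta_j$ depends only on $z - t$, one checks that $\partial_t \eta_j + u \cdot \nabla \eta_j \equiv 0$. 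Testing the advection--diffusion equation against $\eta_j^2 \theta_{\kappa_q, j}$ and using Young's inequality yields, after integration in space and time,
\[
\tfrac{\kappa_q}{2} \int_0^1 \!\!\int_{\T^3} \eta_j^2 |\nabla \theta_{\kappa_q, j}|^2 \,dx\,dt \leq 2\kappa_q \int_0^1 \!\!\int_{\T^3} |\nabla \eta_j|^2 |\theta_{\kappa_q, j}|^2 \,dx\,dt + \tfrac{1}{2} \|\eta_j(0) \theta_{\initial, j}\|_{L^2}^2.
\]
The boundary term vanishes since $\eta_j(0) \equiv 0$ on $\supp(\theta_{\initial, j}) \subset \T^2 \times \supp(\chi_j)$, while the other right-hand side term is controlled by the tail bound, $\|\nabla \eta_j\|_\infty \lesssim a_q^{-\gamma}$, and $|\supp \nabla \eta_j| \lesssim a_q^{\gamma}$, so that
\[
\kappa_q \int_0^1 \!\!\int_{\Omega_j(t)^c} |\nabla \theta_{\kappa_q, j}|^2 \,dx\,dt \lesssim \exp(-a_q^{-\nu/2}).
\]

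To conclude, for each pair $(i,j)$ with $|i-j|>1$, write $\int_{\T^3} \nabla \theta_i \cdot \nabla \theta_j\,dx = \int_{\Omega_j(t)} + \int_{\Omega_j(t)^c}$; since $\Omega_j(t) \subset \Omega_i(t)^c$, Cauchy--Schwarz in space bounds each piece by $\|\nabla \theta_i\|_{L^2(\Omega_i(t)^c)} \|\nabla \theta_j\|_{L^2}$ or its symmetric counterpart. A further Cauchy--Schwarz in time combined with the global energy bound $\kappa_q \int_0^1 \|\nabla \theta_{\kappa_q, j}\|_{L^2}^2 \,dt \leq \|\theta_{\initial, j}\|_{L^2}^2 \lesssim a_q^{\gamma}$ and the Caccioppoli estimate above gives that each pair contributes at most $\lesssim a_q^{\gamma/2} \exp(-a_q^{-\nu/4})$ to $\|II\|_{L^1((0,1))}$. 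Summing over the at most $N_q^2 \lesssim a_q^{-2\gamma}$ pairs yields $\|II\|_{L^1((0,1))} \lesssim a_q^{-3\gamma/2} \exp(-a_q^{-\nu/4}) \leq C a_q^{1/8}$ for $q$ sufficiently large. The main obstacle is the Caccioppoli step: one must verify quantitatively that the polynomial blowup $a_q^{-\gamma}$ in $\|\nabla \eta_j\|_\infty$ is dominated by the super-exponential decay of $\theta_{\kappa_q, j}$ on $\supp(\nabla \eta_j)$. This balance is possible precisely because $u^{(3)} \equiv 1$ allows $\eta_j$ to be transported exactly by $u$, eliminating what would otherwise be a problematic advection error term in the energy identity.
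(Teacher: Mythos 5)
Your proof is correct, and it takes a genuinely different route from the paper's. The paper localizes by pairing $\theta_{\kappa_q,j}$ with an auxiliary function $\overline{\psi}_{\kappa_q,j}$, solving a one--dimensional advection--diffusion equation started from a wider cutoff $\overline{\chi}_j$, and then controls the modified cross-terms $\int \nabla\overline{\theta}_{\kappa_q,i}\cdot\nabla\overline{\theta}_{\kappa_q,j}$ through a four-way decomposition $B_1,\ldots,B_4$, plus a stability estimate between $\overline{\theta}_{\kappa_q,j}$ and $\theta_{\kappa_q,j}$. In contrast, you exploit the structure $u^{(3)}\equiv 1$ directly, by building a cutoff $\eta_j(t,x,y,z)=\tilde\eta_j(z-t)$ that is transported exactly by $u$, so that the Caccioppoli identity for the advection--diffusion equation has no lower-order advective error. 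Combined with the tail bound from Corollary~\ref{lemma:TailEstimates3DAdvectionDiffusion} on the annulus where $\nabla\eta_j$ lives (and the fact that $\eta_j(0,\cdot)$ vanishes on $\supp\theta_{\initial,j}$), this yields a localized dissipation estimate $\kappa_q\int_0^1\|\nabla\theta_{\kappa_q,j}\|_{L^2(\Omega_j(t)^c)}^2\,dt\lesssim \exp(-a_q^{-\nu'})$ for some $\nu'>0$, which after a split of $\int\nabla\theta_i\cdot\nabla\theta_j$ over $\Omega_j(t)$ and $\Omega_j(t)^c$, Cauchy--Schwarz in space and time, and the energy bound $\kappa_q\int_0^1\|\nabla\theta_{\kappa_q,j}\|_{L^2}^2\,dt\lesssim a_q^\gamma$, gives a super-exponentially small bound per pair. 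Summing over the $N_q^2\lesssim a_q^{-2\gamma}$ pairs cannot destroy this. The argument is cleaner and in fact strictly stronger than what the paper obtains (the paper's proof caps the tail decay by $e^{-x}\leq x^{-1}$, landing only on the polynomial scale $a_q^{1/8}$), though the lemma only requires the polynomial bound. One small point worth making explicit in a write-up: the Caccioppoli identity should be justified by testing the local energy equality of Lemma~\ref{lemma:local-energy} against the smooth function $\eta_j^2$, with a time-truncation argument to produce the boundary terms at $t=0$ and $t=1$ (the former vanishes, the latter has a good sign); this is straightforward given $\theta_{\kappa_q,j}\in C([0,1];L^2)\cap L^2((0,1);H^1)$.
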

\begin{lemma}\label{lemma:EstimateOfQuantitiy-I}
There exists a constant $Q \in \N$ and a universal constants $C>0$ such that for all $q \in m \N$ such that $q \geq Q$ we have
\begin{equation}\label{eq:EstimateOfQuantitiy-I}
 \| I \|_{L^1((0,1))} \leq C a_0^{\sfrac{\eps \delta}{2}}.
\end{equation}
\end{lemma}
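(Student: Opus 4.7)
The plan is to bound $|I(t)|$ pointwise and integrate, exploiting that adjacent indices $i$ and $i+1$ have well-separated ``dissipation times.'' For each pair $(i, i+1)$, set $\underline{t}_i = \min(t_{q,i}, t_{q,i+1})$ and $\bar{t}_i = \max(t_{q,i}, t_{q,i+1})$; by construction $\bar{t}_i - \underline{t}_i \geq a_q^\gamma/8$, while $\tilde{t}_q \ll a_q^\gamma/8$ for $q$ large by \eqref{item:EpsDeltaOne}. I partition $[0, 1]$ into five sub-intervals according to the combined phases of $\theta_{\kappa_q, i}$ and $\theta_{\kappa_q, i+1}$: (a) $[0, \underline{t}_i]$ both close to the transport solution; (b) $[\underline{t}_i, \underline{t}_i + \tilde{t}_q/2]$ with one dissipating and the other still transport-like; (c) $[\underline{t}_i + \tilde{t}_q/2, \bar{t}_i]$ with one decayed in $L^2$ and the other transport-like; (d) $[\bar{t}_i, \bar{t}_i + \tilde{t}_q/2]$ the symmetric dissipative window; (e) $[\bar{t}_i + \tilde{t}_q/2, 1]$ both decayed.

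On (a), I write $\theta_{\kappa_q, j} = \theta_{0, j} + R_j$ for $j \in \{i, i+1\}$. Multiplying the PDE for $R_j$ by $R_j$ and integrating yields $\|R_j(t)\|_{L^2}^2 + 2\kappa_q \int_0^t \|\nabla R_j\|_{L^2}^2\, ds \leq \kappa_q \int_0^t \|\nabla \theta_{0, j}\|_{L^2}^2 \, ds \leq C a_q^{\gamma + \eps}$ by Lemma \ref{lemma:EstimateOnTheGradientOfTheSolutionToTheTransportEquation}. Expanding the cross-gradient product, the three pieces containing at least one $R_j$ integrate to $\lesssim a_q^{\gamma + \eps}$ by Cauchy--Schwarz in space-time. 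The main term $\kappa_q \int_0^{\underline{t}_i} \int \nabla \theta_{0, i} \cdot \nabla \theta_{0, i+1}\, dx \, dt$ has integrand supported on $\{z - t \in \supp(\chi_i) \cap \supp(\chi_{i+1})\}$, a set of $z$-measure $\leq a_0 a_q^\gamma$ by \eqref{eq:AboutAdjacentSupports}; repeating the computation behind Lemma \ref{lemma:EstimateOnTheGradientOfTheSolutionToTheTransportEquation} with this restricted $z$-range supplies an extra factor $a_0$, giving a contribution $\lesssim a_0 a_q^{\gamma + \eps}$ per pair.

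On each of the four remaining sub-intervals $J$, I apply Cauchy--Schwarz in space followed by Cauchy--Schwarz in time, bounding $\int_J \kappa_q |\int_{\T^3} \nabla \theta_{\kappa_q, i} \cdot \nabla \theta_{\kappa_q, i+1}\, dx|\, dt$ by the product of $(\kappa_q \int_J \|\nabla \theta_{\kappa_q, j}\|_{L^2}^2\, dt)^{1/2}$ for $j = i, i+1$. By the energy identity of Lemma \ref{lemma:local-energy}, each factor is bounded by the drop in $\|\theta_{\kappa_q, j}\|_{L^2}^2$ across $J$: on the dissipative windows (b) and (d) the dissipating index contributes $\lesssim \|\theta_{\initial, j}\|_{L^2} \lesssim a_q^{\gamma/2}$ while the transport-like index contributes $\lesssim a_q^{(\gamma+\eps)/2}$ via Lemma \ref{lemma:EstimateOnTheGradientOfTheSolutionToTheTransportEquation} combined with the stability estimate from Step 2; on (c) the decayed index satisfies $\|\theta_{\kappa_q, j}(\underline{t}_i + \tilde{t}_q/2)\|_{L^2}^2 \lesssim a_0^{\eps\delta} a_q^\gamma$ by \eqref{eq:dissipation-theta_kappa-j}, supplying the decisive factor $a_0^{\eps\delta/2}$; and on (e), both factors satisfy the same decayed bound. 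Summing the five contributions over $i = 1, \ldots, N_q$ with $N_q \lesssim a_q^{-\gamma}$ yields a total $\lesssim a_0 a_q^\eps + a_q^{\eps/2} + a_0^{\eps\delta/2} \leq C a_0^{\eps\delta/2}$ for $q$ large.

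The main technical obstacle is the advective-advective sub-interval (a): since $\nabla \theta_{0, i}$ and $\nabla \theta_{0, i+1}$ are individually large in $L^2$, the required smallness must be extracted entirely from the thin overlap $|\supp(\chi_i) \cap \supp(\chi_{i+1})| \leq a_0 a_q^\gamma$, through a $z$-localized refinement of the computation underlying Lemma \ref{lemma:EstimateOnTheGradientOfTheSolutionToTheTransportEquation}.
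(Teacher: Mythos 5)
The central premise of your five-way time decomposition is false. You assert that $\tilde{t}_q \ll a_q^\gamma/8$ for $q$ large, but from \eqref{d:time_t_j} we have $\tilde{t}_q = 4 a_0^{-\eps\delta} a_q^{\gamma/(1+\delta) - 10\eps}$, and since $\gamma/(1+\delta) - 10\eps < \gamma$ while $a_q \in (0,1)$, we get
\[
\frac{\tilde{t}_q}{a_q^\gamma} = 4\, a_0^{-\eps\delta}\, a_q^{-\frac{\gamma\delta}{1+\delta} - 10\eps} \longrightarrow \infty \quad\text{as } q \to \infty,
\]
so in fact $\tilde{t}_q \gg a_q^\gamma$, the opposite of what you claim (this is by design: the dissipation window must be long enough for the enhanced-diffusion mechanism of Lemma~\ref{lemma:EnhancedDiffusion} to act at scale $a_q$, cf.\ \eqref{d:k_q}). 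Meanwhile $\bar{t}_i - \underline{t}_i = |z_{i+1}-z_i| \leq a_q^\gamma/8$, so the dissipation windows of adjacent indices overlap almost entirely: your interval $(c)$ is empty, $(d)$ essentially coincides with part of $(b)$, and --- most seriously --- interval $(b) = [\underline{t}_i, \underline{t}_i + \tilde{t}_q/2]$ extends far beyond $\bar{t}_i$. Hence the index you call ``transport-like'' on $(b)$ is in fact deep into its dissipation phase for almost all of $(b)$. The bound $\kappa_q \int_{(b)} \|\nabla\theta_{\kappa_q, i+1}\|_{L^2}^2 \, dt \lesssim a_q^{\gamma+\eps}$ only follows from Step~2 stability plus the energy identity when the upper endpoint of integration is $\leq t_{q,i+1}$; past that time the only available budget is $\kappa_q \int_0^1 \|\nabla\theta_{\kappa_q, i+1}\|^2 \lesssim \|\theta_{\initial, i+1}\|_{L^2}^2 \lesssim a_q^\gamma$. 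With both Cauchy--Schwarz factors of size $a_q^{\gamma/2}$ and $N_q \lesssim a_q^{-\gamma}$ pairs, $(b)$ contributes only $O(1)$, not $O(a_0^{\eps\delta/2})$.

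The paper handles exactly this difficulty by treating the combined window $[t_{q,i}, t_{q,i+1}+\tilde{t}_q]$ (containing both adjacent dissipation phases) as a single block $\tilde{B}_2$. There, both $\theta_{\kappa_q,i}$ and $\theta_{\kappa_q,i+1}$ are replaced by the tensor-product approximants $\tilde{\theta}_{\kappa_q,j} = f_{\kappa_q,j}\,\psi_{\kappa_q,j}$ from Substep~3.2, and the decisive smallness comes from the $z$-localization $\int_\T |\psi_{\kappa_q,i}||\psi_{\kappa_q,i+1}|\, dz \lesssim a_0 a_q^\gamma$, which follows from \eqref{eq:AboutAdjacentSupports} and the one-dimensional tail estimate Lemma~\ref{lemma:TailEstimates1DAdvectionDiffusion}. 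Your intuition that the thin $z$-overlap is what saves the argument is correct, but you apply it only to the advective phase $(a)$; it is precisely on the joint dissipative window that this localization is indispensable, and it must be implemented on the explicit approximants because the $(x,y)$-gradients there are genuinely of order $a_q^{-1}$ and are not made small by near-orthogonality.
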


As a consequence, we obtain
\begin{equation}\label{eq:EstimateOfQuantityE1}
 \| S_{\kappa_q}^{(1)} \|_{L^1((0,1))} \leq \| I \|_{L^1((0,1))} + \| II \|_{L^1((0,1))} \lesssimlarge a_0^{\frac{\eps \delta}{2}}.
\end{equation}
\textbf{Substep 6.2: }
Note that 
\begin{align*}
 A_q^{(1)}(t) &=  \kappa_q \sum_{i = 1}^{N_q} \int_{\T^3} |\nabla \theta_{\kappa_q, i}(t,x)|^2 \, dx = \underbrace{ \kappa_q \sum_{i = 1}^{N_q} \int_{\T^3} |\nabla \theta_{\kappa_q, i}(t,x)|^2 \, dx \mathbbm{1}_{[t_{q,i}, t_{q,i} + \tilde{t}_q]}(t)}_{= A_{\kappa_q}^{(2)}(t)} \\
 & \quad + \underbrace{ \kappa_q \sum_{i = 1}^{N_q} \int_{\T^3} |\nabla \theta_{\kappa_q, i}(t,x)|^2 \, dx \mathbbm{1}_{[0, t_{q,i})}(t)}_{= S_{\kappa_q}^{(2)}(t)} + \underbrace{ \kappa_q \sum_{i = 1}^{N_q} \int_{\T^3} |\nabla \theta_{\kappa_q, i}(t,x)|^2 \, dx \mathbbm{1}_{(t_{q,i} + \tilde{t}_q, 1]}(t)}_{= S_{\kappa_q}^{(3)}(t)}.
\end{align*}
\begin{lemma}\label{lemma:EstimatesOfTheQuantities-E2-E3}   There exists a universal constant $C>0$  such that for all $ q \in m \N$  we have
 \begin{equation}\label{eq:EstimatesOfTheQuantities-E2-E3}
  \| S_{\kappa_q}^{(2)} \|_{L^1((0,1))} + \| S_{\kappa_q}^{(3)} \|_{L^1((0,1))} \leq C a_0^{\eps \delta}.
 \end{equation}
\end{lemma}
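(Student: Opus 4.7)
The idea is that on each of the two offending intervals $[0,t_{q,i})$ and $(t_{q,i}+\tilde t_q,1]$ the piece $\theta_{\kappa_q,i}$ either has barely started to dissipate (on the first interval, because it is close to the transport solution) or has already dissipated most of its $L^2$ mass (on the second interval, by Step~3). In both cases the total dissipation of $\theta_{\kappa_q,i}$ on that interval is controlled by a difference of $L^2$ energies which we can bound explicitly, and the sum over $i$ uses only the elementary bound $N_q\lesssim a_q^{-\gamma}$ from \eqref{item:BoundOnTheNumberOfCutoffs}.

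\textbf{Step 1: Bound on $S^{(2)}_{\kappa_q}$.} For each $i$ the local energy equality of Lemma~\ref{lemma:local-energy} applied to $\theta_{\kappa_q,i}$ on $[0,t_{q,i}]$ gives
\[
2\kappa_q\int_0^{t_{q,i}}\|\nabla\theta_{\kappa_q,i}(s,\cdot)\|_{L^2(\T^3)}^2\,ds
=\|\theta_{\initial,i}\|_{L^2(\T^3)}^2-\|\theta_{\kappa_q,i}(t_{q,i},\cdot)\|_{L^2(\T^3)}^2.
\]
Because $\theta_{0,i}$ solves the advection equation its $L^2$ norm is conserved, so the right--hand side equals $\|\theta_{0,i}(t_{q,i})\|_{L^2}^2-\|\theta_{\kappa_q,i}(t_{q,i})\|_{L^2}^2$, which by the triangle inequality is at most $(\|\theta_{0,i}(t_{q,i})\|_{L^2}+\|\theta_{\kappa_q,i}(t_{q,i})\|_{L^2})\,\|\theta_{0,i}(t_{q,i})-\theta_{\kappa_q,i}(t_{q,i})\|_{L^2}$. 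Using $\|\theta_{\initial,i}\|_{L^2}^2\lesssim a_q^{\gamma}$ (since $|\supp\chi_i|\lesssim a_q^{\gamma}$ and $\theta_{\initial}\in L^\infty$) together with the stability estimate \eqref{stability:theta_kq-theta}, this is $\lesssim a_q^{\gamma/2}\cdot a_q^{(\gamma+\eps)/2}=a_q^{\gamma+\eps/2}$. Summing over $i=1,\dots,N_q$ and using $N_q\lesssim a_q^{-\gamma}$ yields
\[
\|S^{(2)}_{\kappa_q}\|_{L^1((0,1))}\lesssim N_q\,a_q^{\gamma+\eps/2}\lesssim a_q^{\eps/2}\le a_0^{\eps/2}\le a_0^{\eps\delta},
\]
where the last inequality uses $\delta\le 1/2$ from \eqref{item:EpsDeltaThree}.

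\textbf{Step 2: Bound on $S^{(3)}_{\kappa_q}$.} Apply the local energy equality of Lemma~\ref{lemma:local-energy} to $\theta_{\kappa_q,i}$ on $[t_{q,i}+\tilde t_q,1]$:
\[
2\kappa_q\int_{t_{q,i}+\tilde t_q}^{1}\|\nabla\theta_{\kappa_q,i}(s,\cdot)\|_{L^2(\T^3)}^2\,ds
=\|\theta_{\kappa_q,i}(t_{q,i}+\tilde t_q,\cdot)\|_{L^2(\T^3)}^2-\|\theta_{\kappa_q,i}(1,\cdot)\|_{L^2(\T^3)}^2.
\]
By monotonicity of $t\mapsto\|\theta_{\kappa_q,i}(t,\cdot)\|_{L^2}^2$ (again from \eqref{e:energy-equality-global}) and estimate \eqref{eq:dissipation-theta_kappa-j} at time $t_{q,i}+\tilde t_q/2$, the first term on the right is $\lesssim a_0^{\eps\delta}a_q^{\gamma}$. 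Summing over $i$ and using $N_q\lesssim a_q^{-\gamma}$ gives
\[
\|S^{(3)}_{\kappa_q}\|_{L^1((0,1))}\lesssim N_q\,a_0^{\eps\delta}\,a_q^{\gamma}\lesssim a_0^{\eps\delta}.
\]

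\textbf{Main obstacle.} There is no serious obstacle: the lemma is essentially a bookkeeping consequence of the two ingredients that have already done the hard work, namely the stability estimate \eqref{stability:theta_kq-theta} (for $S^{(2)}$) and the dissipation estimate \eqref{eq:dissipation-theta_kappa-j} (for $S^{(3)}$). The only mildly delicate point is to route the energy balance through $\theta_{0,i}$ in Step~1 so that the small factor $a_q^{(\gamma+\eps)/2}$ from the stability estimate is what controls the difference of squared $L^2$ norms; this is what allows the sum over the $N_q\lesssim a_q^{-\gamma}$ pieces to still be small.
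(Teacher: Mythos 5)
Your proof is correct and follows essentially the same route as the paper: for $S^{(2)}_{\kappa_q}$ you use the energy balance on $[0,t_{q,i}]$, factor the difference of squared $L^2$ norms, replace $\|\theta_{\initial,i}\|_{L^2}$ by $\|\theta_{0,i}(t_{q,i})\|_{L^2}$ via $L^2$ conservation for the transport flow, and control the remaining difference by the stability estimate \eqref{stability:theta_kq-theta}, exactly as in the paper's equation \eqref{eq:AboutDissipationBeforeCriticalTime}; for $S^{(3)}_{\kappa_q}$ you use the energy balance on $[t_{q,i}+\tilde t_q,1]$ together with the decay estimate \eqref{eq:AutonomousNormalGoalOfStep3}, again exactly as in the paper. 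The only small inaccuracy is the claim that $\delta\le 1/2$ follows ``from \eqref{item:EpsDeltaThree}'' — \eqref{item:EpsDeltaThree} relates $\eps$ to $\delta$, not $\delta$ to an absolute constant; the smallness of $\delta$ is simply part of the standing parameter choice in Subsection~\ref{subsec:ParmeterDefinitions}.
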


\textbf{Substep 6.3: }
For $q \in m\N$ and all $i = 1, \ldots, N_q$, define $\mathcal{J}_{q,i} = [t_{q,i}, t_{q,i} + \tilde{t}_q]$ and observe that 
\begin{align*}
 A_{\kappa_q}^{(2)}(t) &=  \kappa_q \sum_{i = 1}^{N_q} \int_{\T^3} |\nabla {\theta}_{\kappa_q, i}(t,x)|^2 \, dx \mathbbm{1}_{\mathcal{J}_{q,i}}(t) = \underbrace{ \kappa_q \sum_{i = 1}^{N_q} \int_{\T^3} |\nabla \tilde{\theta}_{\kappa_q, i}(t,x)|^2 \, dx \mathbbm{1}_{\mathcal{J}_{q,i}}(t)}_{= A_{\kappa_q}(t)} \\
 &\quad + \underbrace{ \kappa_q \sum_{i = 1}^{N_q} \int_{\T^3} \nabla {\theta}_{\kappa_q, i}(t,x) \cdot \nabla ({\theta}_{\kappa_q, i} - \tilde{\theta}_{\kappa_q, i})(t,x)\, dx \mathbbm{1}_{\mathcal{J}_{q,i}}(t)}_{= S_{\kappa_q}^{(4)}(t)} \\
 &\quad + \underbrace{ \kappa_q \sum_{i = 1}^{N_q} \int_{\T^3} \nabla ({\theta}_{\kappa_q, i} - \tilde{\theta}_{\kappa_q, i})(t,x) \cdot \nabla \tilde{\theta}_{\kappa_q, i}(t,x)\, dx \mathbbm{1}_{\mathcal{J}_{q,i}}(t)}_{= S_{\kappa_q}^{(5)}(t)} \\
\end{align*}
\begin{lemma}\label{lemma:EstimateOfQuatities-E6-E7}
There exist universal constant $C>0$  such that for all $ q \in m \N$  we have 
 \begin{equation}\label{eq:EstimateOfQuatities-E6-E7}
  \| S_{\kappa_q}^{(4)} \|_{L^1((0,1))} + \| S_{\kappa_q}^{(5)} \|_{L^1((0,1))} \leq C a_0^{\sfrac{\eps \delta }{2}}\,.
 \end{equation}
\end{lemma}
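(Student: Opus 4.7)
The plan is to estimate $S_{\kappa_q}^{(4)}$ and $S_{\kappa_q}^{(5)}$ directly in $L^1((0,1))$ by a termwise application of Cauchy--Schwarz (in space and then in time), using only two ingredients already established earlier in the section: the standard energy identities for $\theta_{\kappa_q,i}$ and $\tilde{\theta}_{\kappa_q,i}$, and the stability bound \eqref{eq:TildeApproxOfHatL2} from Substep 3.2. Since the intervals $\mathcal{J}_{q,i}$ have finite overlap (at most, adjacent intervals overlap, and the collection of indices is of cardinality $N_q \lesssim a_q^{-\gamma}$), after summing it suffices to prove a pointwise-in-$i$ estimate of order $a_0^{\varepsilon\delta/2} a_q^{\gamma}$.

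First I would handle $S_{\kappa_q}^{(4)}$. Applying Cauchy--Schwarz spatially inside the integral, then Cauchy--Schwarz in time over $\mathcal{J}_{q,i}$, yields
\begin{align*}
\int_{\mathcal{J}_{q,i}} \kappa_q \left| \int_{\T^3} \nabla \theta_{\kappa_q,i} \cdot \nabla(\theta_{\kappa_q,i} - \tilde{\theta}_{\kappa_q,i}) \, dx \right| dt
\leq \Big( \kappa_q \!\!\int_{\mathcal{J}_{q,i}} \!\!\|\nabla \theta_{\kappa_q,i}\|_{L^2}^2 dt \Big)^{\!1/2} \!\Big( \kappa_q \!\!\int_{\mathcal{J}_{q,i}} \!\!\|\nabla(\theta_{\kappa_q,i} - \tilde{\theta}_{\kappa_q,i})\|_{L^2}^2 dt \Big)^{\!1/2}.
\end{align*}
The first factor is controlled by the global energy equality \eqref{e:energy-equality-global} applied to $\theta_{\kappa_q,i}$ combined with the initial datum bound $\|\theta_{\initial,i}\|_{L^2(\T^3)}^2 \lesssim a_q^\gamma$ (which holds because $\supp(\theta_{\initial,i}) \subset \T^2 \times \supp(\chi_i)$ and $|\supp(\chi_i)| \lesssim a_q^\gamma$ by \eqref{support:chi_j}), giving a bound of $a_q^{\gamma/2}$. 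The second factor is precisely the quantity appearing in the stability estimate \eqref{eq:TildeApproxOfHatL2} from Substep 3.2, valid on $[t_{q,i}, t_{q,i} + \overline{t}_q/2] \supseteq \mathcal{J}_{q,i}$ for $q$ large enough (since $\tilde{t}_q \leq \overline{t}_q$ is forced by \eqref{item:EpsDeltaThree}; up to halving we can assume $\tilde{t}_q \leq \overline{t}_q/2$ as well), and it is bounded by $a_0^{\varepsilon\delta/2} a_q^{\gamma/2}$.

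Second, the argument for $S_{\kappa_q}^{(5)}$ is identical in structure, with the role of $\|\nabla \theta_{\kappa_q,i}\|_{L^2}$ replaced by $\|\nabla \tilde{\theta}_{\kappa_q,i}\|_{L^2}$. The latter is estimated by the energy balance for $\tilde{\theta}_{\kappa_q,i}$, which solves the linear constant-coefficient advection--diffusion equation \eqref{eq:FirstApproxPDEFirstNotNS} and satisfies
\[
 \kappa_q \int_{\mathcal{J}_{q,i}} \|\nabla \tilde{\theta}_{\kappa_q,i}\|_{L^2(\T^3)}^2 dt \leq \tfrac{1}{2} \|\tilde{\theta}_{\kappa_q,i}(t_{q,i},\cdot)\|_{L^2(\T^3)}^2 \lesssim a_q^\gamma,
\]
using that the initial datum for $\tilde{\theta}_{\kappa_q,i}$ at time $t_{q,i}$ is a product whose $z$-factor $\varphi_{\initial}\chi_i$ has $L^2$ norm $\lesssim a_q^{\gamma/2}$ and whose $(x,y)$-factor $(\theta_{\chess})_{a_0^{\varepsilon\delta}}$ is uniformly bounded in $L^2$. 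Combining with the same stability estimate yields the same bound per index.

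Finally, summing over $i = 1, \ldots, N_q$ with $N_q \leq C a_q^{-\gamma}$ from \eqref{item:BoundOnTheNumberOfCutoffs}, we obtain
\[
 \| S_{\kappa_q}^{(4)} \|_{L^1((0,1))} + \| S_{\kappa_q}^{(5)} \|_{L^1((0,1))} \lesssim N_q \cdot a_0^{\varepsilon\delta/2} a_q^{\gamma/2} \cdot a_q^{\gamma/2} \lesssim a_0^{\varepsilon\delta/2},
\]
as required. The only mild technical point I expect to address carefully is the compatibility of the time interval $\mathcal{J}_{q,i} = [t_{q,i}, t_{q,i} + \tilde{t}_q]$ with the one on which \eqref{eq:TildeApproxOfHatL2} was originally proved; apart from a one-line verification from the definitions of $\tilde{t}_q$, $\overline{t}_q$ and the smallness assumption \eqref{item:EpsDeltaThree}, the proof is a clean Cauchy--Schwarz argument and no new ideas beyond Step 3 are required.
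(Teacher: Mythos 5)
Your proof is correct and matches the paper's own argument almost verbatim: Cauchy--Schwarz on each summand, the energy identity \eqref{e:energy-equality-global} for the first factor (giving $a_q^{\gamma/2}$), the stability bound \eqref{eq:TildeApproxOfHatL2} for the second factor (giving $a_0^{\varepsilon\delta/2}a_q^{\gamma/2}$), and then summing over $N_q \lesssim a_q^{-\gamma}$ indices. The only point you flagged — the compatibility of $\mathcal{J}_{q,i}$ with the interval $[t_{q,i}, t_{q,i}+\overline{t}_q/2]$ on which \eqref{eq:TildeApproxOfHatL2} holds — is indeed the one thing that needs the extra remark that $\tilde{t}_q/\overline{t}_q \to 0$ (which follows from \eqref{item:EpsDeltaThree} and \eqref{d:time_t_j}), and the paper handles this silently via its blanket assumption that $q$ is large.
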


Notice that $S_{\kappa_q} = \sum_{\ell = 1}^{5} S_{\kappa_q}^{(\ell)}$ and therefore in virtue of \eqref{eq:EstimateOfQuantityE1}, \eqref{eq:EstimatesOfTheQuantities-E2-E3},  and \eqref{eq:EstimateOfQuatities-E6-E7}
\begin{equation}\label{eq:L1BoundOnTheError}
\| S_{\kappa_q} \|_{L^1((0,1))} \lesssimlarge a_0^{\frac{\eps \delta}{2}}. 
\end{equation}
This proves the first inequality in \eqref{eq:TheTwoPropertiesThatWeWant}.

\textbf{Substep 6.4: } In this step we prove that  $\| \mu_T \|_{TV} \geq 1/4$ and $\sup_{q} \| A_{\kappa_q} \|_{L^{\infty}((0,1))} < \infty$.
Firstly, we notice that $\mu_T$ is the weak* limit up to subsequence (in the sense of measure) of the non-negative sequence $\{ \kappa_q \int_{\T^3} |\nabla \theta_{\kappa_q} |^2 \}_{q \in m \N }$ that is such that \eqref{eq:total-diss} holds, therefore $\| \mu_T \|_{TV} \geq 1/4$ holds.

We now prove the second property.
Recall that $\tilde{\theta}_{\kappa_q, i}$ solves \eqref{eq:FirstApproxPDEFirstNotNS} and $\mathcal{J}_{q,i} = [t_{q,i}, t_{q,i} + \tilde{t}_q]$. Define the function $\overline{N}_q \colon [0,1] \to \N$ as
\[
 \overline{N}_q(t) = \# \{ j \in \{ 1, \ldots, N_q \} : \mathbbm{1}_{\mathcal{J}_{q,i}}(t) \neq 0 \}
\]
It follows from straightforward computations that $\| \overline{N}_q(t) \|_{L^{\infty}((0,1))} \lesssim a_0^{- \eps \delta} a_q^{\frac{\gamma}{1 + \delta} - 10 \eps - \gamma}$.
For all $t \in \mathcal{J}_{q,i}$, 
\[
\| \nabla \tilde{\theta}_{\kappa_q, i}(t, \cdot) \|_{L^2(\T^3)}^2 \leq \| \nabla \tilde{\theta}_{\kappa_q, i}(t_{q,i}, \cdot) \|_{L^2(\T^3)}^2 \overset{\eqref{eq:LInftyNormOfTheGradientOfTheMollifiedChessFunction}}{\lesssim} a_0^{-6 \eps \delta} a_q^{-2 + \gamma}.
\]
From this we get the $L^\infty$ bound on $A_{\kappa_q}$ independent on $q$:
\begin{align*}
 A_{\kappa_q}(t) &\leq  \kappa_q \sum_{i = 1}^{N_q} \int_{\T^3} |\nabla \tilde{\theta}_{\kappa_q, i}(t,x)|^2 \, dx \mathbbm{1}_{\mathcal{J}_{q,i}} (t) =  \kappa_q \sum_{i = 1}^{N_q} \| \nabla \tilde{\theta}_{\kappa_q, i}(t,\cdot) \|_{L^2(\T^3)}^2 \mathbbm{1}_{\mathcal{J}_{q,i}} (t) \\
 &\leq \kappa_q \overline{N}_q(t) a_0^{-6 \eps \delta} a_q^{-2 + \gamma} \lesssim a_0^{-7 \eps \delta} \quad \forall t \in (0,1)\,,
\end{align*}
which concludes the proof of the second property. Therefore, up to subsequences
\[
  A_{\kappa_q} \overset{*}{\rightharpoonup} \mathcal{A} \in L^{\infty}((0,1)), \quad 
  S_{\kappa_q} \overset{*}{\rightharpoonup} \mathcal{S} \in \mathcal{M}([0,1]) \quad \text{and} \quad - \frac{1}{2}  e_{\kappa_q}^{\prime} \overset{*}{\rightharpoonup} \mu_T \in \mathcal{M}([0,1])  \,,
 \]
 where the first convergence is weakly* in $L^\infty$ and the other two are weakly* in $\mathcal{M}(0,1)$.
 In virtue of \eqref{eq:L1BoundOnTheError}, we find that $\| \mu_T - \mathcal{A} \|_{TV} = \| \mathcal{S} \|_{TV} \lesssim a_0^{\frac{\eps \delta}{2}} < \beta$ where the last inequality holds up to choosing $a_0$ sufficiently small depending on universal constants and $\beta $  in  Subsection \ref{subsec:ParmeterDefinitions}. Thanks to the two properties proved in this step we conclude the proof of Theorem~\ref{t_anomalous_autonomous}.

\subsection{Proof of Lemmas~\ref{lemma:EstimateOnTheGradientOfTheSolutionToTheTransportEquation}, \ref{lemma:EstimateOfQuantitiy-II}, \ref{lemma:EstimateOfQuantitiy-I}, \ref{lemma:EstimatesOfTheQuantities-E2-E3},  and \ref{lemma:EstimateOfQuatities-E6-E7}}\label{subsec:ProofOfTheLemmasOfProofA}

\begin{proof}[Proof of Lemma~\ref{lemma:EstimateOnTheGradientOfTheSolutionToTheTransportEquation}]
In order to estimate the right-hand side, we estimate the following three quantities:
\begin{enumerate}
 \item $\displaystyle \int_{0}^{t_{q,j}} \int_{\T^3} |\partial_x {\theta}_{0,j}(t,x,y,z)|^2 \, dx dy dz \, dt$ \label{item:QuantityToEstimate1} \\
 \item $\displaystyle \int_{0}^{t_{q,j}} \int_{\T^3} |\partial_y {\theta}_{0,j}(t,x,y,z)|^2 \, dx dy dz \, dt$ \label{item:QuantityToEstimate2}\\
 \item $\displaystyle \int_{0}^{t_{q,j}} \int_{\T^3} |\partial_z {\theta}_{0,j}(t,x,y,z)|^2 \, dx dy dz \, dt$ 
\label{item:QuantityToEstimate3}\\
\end{enumerate}
\fbox{Estimate of \eqref{item:QuantityToEstimate1}:}
We notice that $|\chi_j (z-t)| = 0$ for any $z \in (z_j +t, z_j + t + \sfrac{a_q^\gamma}{8})^c$, from which we estimate
\begin{align*}
 &\int_{0}^{t_{q,j}} \int_{\T^3} |\partial_x {\theta}_{0,j}(t,x,y,z)|^2 \, dx dy dz \, dt = \int_{0}^{t_{q,j}} \int_{\T} |\varphi_{\initial}(z-t)|^2 |\chi_j(z-t)|^2 \int_{\T^2} |\partial_x \theta_{\stat}(x,y,z)|^2 \, dx dy \, dz \, dt \\ 
 &\leq \| \varphi_{\initial} \|_{L^{\infty}(\T)}^2 \int_{0}^{t_{q,j}} \int_{z_j + t}^{z_j + t + \sfrac{a_q^\gamma}{8}} \int_{\T^2} |\partial_x \theta_{\stat}(x,y,z)|^2 \, dx dy \, dz \, dt \\
  &= \| \varphi_{\initial} \|_{L^{\infty}(\T)}^2 \int_{z_j}^{\sfrac{1}{2} - T_q + \sfrac{a_q^{\gamma}}{4}} \int_{\T^2} \int_{z - z_j - \sfrac{a_q^{\gamma}}{8}}^{z - z_j}  |\partial_x \theta_{\stat}(x,y,z)|^2  \, dt \, dx dy  \, dz \\
  &\lesssim a_q^{\gamma} \int_{0}^{\sfrac{1}{2} - T_q + \sfrac{a_q^{\gamma}}{4}} \int_{\T^2} |\partial_x \theta_{\stat}(x,y,z)|^2 \, dx dy \, dz 
  \end{align*}
  Using \eqref{eq:vartheta_stationary}  we have
  \begin{align*}
  &= a_q^{\gamma} \Bigg[ \int_{0}^{\sfrac{1}{2} - T_0} \underbrace{\int_{\T^2} |\partial_x \theta_{\stat}(x,y,z)|^2 \, dx dy}_{\lesssim a_0^{- 2(1 + 3 \eps (1 + \delta))}} \, dz + \sum_{j = 0}^{q-1} \int_{\sfrac{1}{2} - T_j}^{\sfrac{1}{2} - T_{j+1}} \underbrace{\int_{\T^2} |\partial_x \theta_{\stat}(x,y,z)|^2 \, dx dy}_{\lesssim a_{j+1}^{-2(1 + 3 \eps (1 + \delta))}} \, dz \\
  &\qquad + \int_{\sfrac{1}{2} - T_q}^{\sfrac{1}{2} - T_{q} + a_q^{\gamma }} \underbrace{\int_{\T^2} |\partial_x \theta_{\stat}(x,y,z)|^2 \, dx dy}_{\lesssim a_q^{-2(1 + 3 \eps(1 + \delta))}} \, dz \Bigg] \\
  &\lesssim a_q^{\gamma} \Bigg[ a_0^{- 2(1 + 3 \eps (1 + \delta))} + \sum_{j = 0, j \in m \N }^{q-m}  a_j^{\frac{\gamma}{1 + \delta} - 10 \eps} a_{j+1}^{-2(1 + 3 \eps (1 + \delta))} + \sum_{j = 0, j \notin m \N }^{q-1}  a_j^\gamma a_{j+1}^{-2(1 + 3 \eps (1 + \delta))}  + a_q^{\gamma} a_q^{-2(1 + 3 \eps (1 + \delta))} \Bigg]
  \\
  & \lesssim a_q^{\gamma} \left[ a_0^{- 2(1 + 3 \eps (1 + \delta))} + q a_{q-m}^{\frac{\gamma}{1 + \delta} - 10 \eps - 2(1 + 3 \eps (1 + \delta))(1 + \delta)} +  q a_{q-1}^{\gamma - 2(1 + 3 \eps (1 + \delta))(1 + \delta)} +  a_q^{\gamma -2(1 + 3 \eps (1 + \delta))} \right]\,
  \end{align*}
  where we have used that $q \in m \N$ and $a_{j+1} = a_j^{1+ \delta}$ for any $j$.
  Using  also \eqref{eq:InequalityForTheConstantM}  we get
  $$ a_{q-m +1}^{\frac{\gamma (1- \delta)}{(1+\delta)^2} - \frac{10 \eps}{1 + \delta} - 2 (1+ 3 \varepsilon (1+ \delta))} \leq a_q^{\frac{\gamma}{1+\delta} - 2 (1+ 3 \varepsilon (1+ \delta))},$$
concluding that 
$$ \int_{0}^{t_{q,j}} \int_{\T^3} |\partial_x {\theta}_{0,j}(t,x,y,z)|^2 \, dx dy dz \, dt \lesssim q a_{q}^{\gamma + \frac{\gamma}{1 + \delta} - 2(1 + 3 \eps (1 + \delta))}\,.$$
\fbox{Estimate of \eqref{item:QuantityToEstimate2}:} In the same way of \eqref{item:QuantityToEstimate1}.
\\
\fbox{Estimate of \eqref{item:QuantityToEstimate3}:}
We note that 
\[
 \partial_z {\theta}_{0,j} = \partial_z (\varphi_{\initial}(z-t) \chi_j(z - t)) \theta_{\stat}(x,y,z) + \varphi_{\initial}(z-t) \chi_j(z - t) \partial_z \theta_{\stat}(x,y,z)\,.
\]
Using again \eqref{eq:vartheta_stationary}, \eqref{chi:estimate-C-1} and $\| \varphi_{\initial} \|_{C^1} \lesssim 1$ 
we estimate
$$ \| \partial_z {\theta}_{0,j} \|_{L^\infty} \lesssim a_0^{-1} a_q^{- \gamma} \lesssim  a_{q}^{\gamma + \frac{\gamma}{1 + \delta} - 2(1 + 3 \eps (1 + \delta))} \,,$$
where in the last we used $\gamma = \sfrac{\delta}{8} <\frac{1}{8}$ in the last inequality. 
Finally, from the estimates of \eqref{item:QuantityToEstimate1}, \eqref{item:QuantityToEstimate2} and \eqref{item:QuantityToEstimate3} and \eqref{d:k_q}, we find that
\[
\| {\theta}_{\kappa_q, j}(t) - {\theta}_{0,j}(t) \|_{L^2(\T^3)}^2 \lesssim  q a_q^{2 - \frac{\gamma}{1 + \delta} + 10 \eps + \frac{\gamma}{1 + \delta} + \gamma - 2(1 + 3 \eps (1 + \delta))}  \lesssim a_q^{\gamma + \eps} \quad \text{for all} \quad  t \in [0, t_{q,j}],
\]
where we also used that $q a_q^{\varepsilon} \leq 1$ for any $q \in \N$.
\end{proof}

\begin{proof}[Proof of Lemma~\ref{lemma:EstimateOfQuantitiy-II}]
We estimate the terms of $II$ given by
\[
 II_{(i,j)}(t) =  \kappa_q \int_{\T^3} \nabla \theta_{\kappa_q, i}(t,x) \cdot \nabla \theta_{\kappa_q, j}(t,x) \, dx \quad \text{with $i,j \in \{ 1, \ldots, N_q\}$ and $|i - j| > 1$}
\]
in $L^1((0,1))$.
For each $q \geq 1$, let $\{ \overline{\chi}_{j} \}_{j = 1}^{N_q} \subset C^{\infty}_0(\T)$ be a sequence of cutoff functions such that 
\begin{enumerate}
 \myitem{$\overline{\mbox{D1}}$} $0 \leq \overline{\chi}_{j} \leq 1$ for all $j = 1, \ldots, N_q$;
 \myitem{$\overline{\mbox{D2}}$} $\overline{\chi}_{j} = 1$ on $\supp(\chi_{j})$ for all $j = 1, \ldots, N_q$;
 \myitem{$\overline{\mbox{D3}}$} $\| \overline{\chi}_{j} \|_{C^1(\T)} \lesssim a_0^{-1} a_q^{- \gamma}$ for all $j = 1, \ldots, N_q$; \label{item:AboutTheC1NormOfTheSecondCollectionsOfCutoffs}
 \myitem{$\overline{\mbox{D4}}$} we have
 \begin{equation}
  |\supp(\overline{\chi}_{i}) \cap \supp(\overline{\chi}_{j})| \leq 3 a_0 a_q^{\gamma} \quad \forall i,j \in \{ 1, \ldots, N_q\};
 \end{equation}
 and
 \begin{equation}\label{eq:DistanceBetweenTwoNonAdjacentsCutoffsSecondCollection}
  \inf_{|i-j| > 1} \dist ( \supp(\overline{\chi}_{i}), \supp(\overline{\chi}_{j}) ) \geq \frac{a_q^{\gamma}}{16}.
 \end{equation}
\end{enumerate}
For each $j = 1, \ldots, N_q$, let $\overline{\psi}_{\kappa_q, j} \colon [0,1] \times \T \to \R$ be the solution to the advection-diffusion equation
 \begin{equation}\label{eq:1D-Adv-Diff-Cutoffs}
 \left\{
 \begin{array}{ll}
 \partial_t \overline{\psi}_{\kappa_q, j} + \partial_z \overline{\psi}_{\kappa_q, j} = \kappa_q \partial_{zz} \overline{\psi}_{\kappa_q, j}; \\
 \overline{\psi}_{\kappa_q, j}(0, \cdot) = \overline{\chi}_j(\cdot). \\
 \end{array}
 \right.
\end{equation}
Note that due to Lemma~\ref{lemma:TailEstimates1DAdvectionDiffusion} and \eqref{eq:DistanceBetweenTwoNonAdjacentsCutoffsSecondCollection}, whenever we have two integers $i$ and $j$ such that $|i - j| > 1$, it holds that
\begin{equation}\label{eq:TwoNonAdjacentsCutoffsRemainFarAppartSomehow}
 \| \overline{\psi}_{\kappa_q, i} \overline{\psi}_{\kappa_q, j} \|_{L^{\infty}((0,1) \times \T)} \lesssim \kappa_q a_q^{- 2 \gamma}.
\end{equation}
For each $j$, define $\overline{\theta}_{\kappa_q, j}$ as $\overline{\theta}_{\kappa_q, j}(t,x,y,z) = {\theta}_{\kappa_q, j}(t,x,y,z) \overline{\psi}_{\kappa_q, j}(t,z)$. By standard computations, we note that 
 \begin{equation}\label{eq:1D-Adv-Diff-Cutoffs-Prod-With-Solution}
 \left\{
 \begin{array}{ll}
 \partial_t \overline{\theta}_{\kappa_q, j} + u \cdot \nabla \overline{\theta}_{\kappa_q, j} = \kappa_q \Delta \overline{\theta}_{\kappa_q, j} - 2 \kappa_q \nabla {\theta}_{\kappa_q, j} \cdot \nabla \overline{\psi}_{\kappa_q, j}; \\
 \overline{\theta}_{\kappa_q, j}(0, \cdot) = {\theta}_{\kappa_q, j}(0, \cdot). \\
 \end{array}
 \right.
\end{equation}
By energy estimates, we find
\begin{align*}
 &\| ({\theta}_{\kappa_q, j} - \overline{\theta}_{\kappa_q, j})(t, \cdot) \|_{L^2(\T^3)}^2 + 2 \kappa_q \int_0^t \int_{\T^3} |\nabla({\theta}_{\kappa_q, j} - \overline{\theta}_{\kappa_q, j})|^2 \, dx ds \\
 &= 4 \kappa_q \int_0^t \int_{\T^3} (\nabla {\theta}_{\kappa_q, j} \cdot \nabla \overline{\psi}_{\kappa_q, j})({\theta}_{\kappa_q, j} - \overline{\theta}_{\kappa_q, j}) \, dx ds \\
 &\leq 8 \kappa_q \int_0^t \int_{\T^3} | \nabla {\theta}_{\kappa_q, j} \cdot \nabla \overline{\psi}_{\kappa_q, j}| \, dx ds \\
 &\leq 4 \left( 2 \kappa_q \int_0^t \int_{\T^3} |\nabla {\theta}_{\kappa_q, j}|^2 \, dx ds \right)^{\sfrac{1}{2}} \left( 2 \kappa_q \int_0^t \int_{\T^3} |\nabla \overline{\psi}_{\kappa_q, j}|^2 \, dx ds \right)^{\sfrac{1}{2}} \\
 &\lesssim \kappa_q^{\sfrac{1}{2}} \| \theta_{\kappa_q, j}(0, \cdot) \|_{L^2(\T^3)} \| \partial_z \overline{\chi}_{j} \|_{L^{\infty}(\T)} \\
 &\overset{\eqref{item:AboutTheC1NormOfTheSecondCollectionsOfCutoffs}}{\lesssim} \kappa_q^{\sfrac{1}{2}} a_0^{-1} a_q^{- \gamma} a_q^{\sfrac{\gamma}{2}} = \kappa_q^{\sfrac{1}{2}} a_0^{-1} a_q^{- \sfrac{\gamma}{2}} \lesssimlarge a_q^{\sfrac{1}{2}}.
\end{align*}
In particular,
\begin{equation}\label{eq:Useful-Stability-Between-Theta-And-Approximation}
2 \kappa_q \int_0^t \int_{\T^3} |\nabla({\theta}_{\kappa_q, j} - \overline{\theta}_{\kappa_q, j})|^2 \, dx ds \lesssimlarge a_q^{\sfrac{1}{2}}.
\end{equation}
Now, instead of estimating the terms of $II$ in $L^{1}((0,1))$, we will estimate
\[
 \overline{II}_{(i,j)}(t) =  \kappa_q \int_{\T^3} \nabla \overline{\theta}_{\kappa_q, i}(t,x) \cdot \nabla \overline{\theta}_{\kappa_q, j}(t,x) \, dx \quad \text{with $i,j \in \{ 1, \ldots, N_q\}$ and $|i - j| > 1$}
\]
in $L^{1}((0,1))$ for $i$ and $j$ such that $|i - j| > 1$. 
We will then use \eqref{eq:Useful-Stability-Between-Theta-And-Approximation} to show that this is a good approximation of the terms of $II$ in $L^{1}((0,1))$. We observe that
\begin{align*}
\| \overline{II}_{(i,j)} & \|_{L^1((0,1))} = \left\|  \kappa_q \int_{\T^3} \nabla \overline{\theta}_{\kappa_q, i}(\cdot,x) \cdot \nabla \overline{\theta}_{\kappa_q, j}(\cdot,x) \, dx \right\|_{L^1((0,1))} =  \kappa_q \int_0^1 \left| \int_{\T^3} \nabla \overline{\theta}_{\kappa_q, i} \cdot \nabla \overline{\theta}_{\kappa_q, j} \, dx \right| \, dt 
\\
&\leq \underbrace{ \kappa_q \int_0^1 \left| \int_{\T^3} (\nabla \overline{\psi}_{\kappa_q, i} \cdot \nabla \overline{\psi}_{\kappa_q, j}) ({\theta}_{\kappa_q, i} {\theta}_{\kappa_q, j}) \, dx \right| \, dt}_{= B_1} 
 + \underbrace{ \kappa_q \int_0^1 \left| \int_{\T^3} (\nabla \overline{\psi}_{\kappa_q, i} \cdot \nabla \theta_{\kappa_q, j}) ({\theta}_{\kappa_q, i} \overline{\psi}_{\kappa_q, j}) \, dx \right| \, dt}_{= B_2} \\
&\quad + \underbrace{ \kappa_q \int_0^1 \left| \int_{\T^3} (\nabla \theta_{\kappa_q, i} \cdot \nabla \overline{\psi}_{\kappa_q, j}) (\overline{\psi}_{\kappa_q, i} {\theta}_{\kappa_q, j}) \, dx \right| \, dt}_{= B_3} 
+ \underbrace{ \kappa_q \int_0^1 \left| \int_{\T^3} (\nabla \theta_{\kappa_q, i} \cdot \nabla \theta_{\kappa_q, j}) ( \overline{\psi}_{\kappa_q, i} \overline{\psi}_{\kappa_q, j}) \, dx \right| \, dt}_{= B_4} \,.
\end{align*}

Now, we estimate each one of the quantities $B_1$, $B_2$, $B_3$ and $B_4$.
\begin{align} \label{eq:EstimateOfA_1}
B_1 &=  \kappa_q \int_0^1 \left| \int_{\T^3} (\nabla \overline{\psi}_{\kappa_q, i} \cdot \nabla \overline{\psi}_{\kappa_q, j}) ({\theta}_{\kappa_q, i} {\theta}_{\kappa_q, j}) \, dx \right| \, dt \notag 
\\
&\leq  \kappa_q \int_0^1 \| \nabla \overline{\psi}_{\kappa_q, i} (t, \cdot) \|_{L^{\infty}(\T)} \| \nabla \overline{\psi}_{\kappa_q, j} (t, \cdot) \|_{L^{\infty}(\T)} \| \theta_{\kappa_q, i}(t, \cdot) \|_{L^{\infty}(\T^3)} \| \theta_{\kappa_q, j}(t, \cdot) \|_{L^{\infty}(\T^3)} \, dt 
\\
&\lesssim \kappa_q \| \nabla \overline{\psi}_{\kappa_q, i} (0, \cdot) \|_{L^{\infty}(\T)} \| \nabla \overline{\psi}_{\kappa_q, j} (0, \cdot) \|_{L^{\infty}(\T)} \overset{\eqref{item:AboutTheC1NormOfTheSecondCollectionsOfCutoffs}}{\lesssim} \kappa_q a_0^{-2} a_q^{- 2 \gamma} \lesssimlarge a_q^{\sfrac{1}{2}} \,, \notag
\end{align} 
\begin{align} \label{eq:EstimateOfA_2}
B_2 &=  \kappa_q \int_0^1 \left| \int_{\T^3} (\nabla \overline{\psi}_{\kappa_q, i} \cdot \nabla \theta_{\kappa_q, j}) ({\theta}_{\kappa_q, i} \overline{\psi}_{\kappa_q, j}) \, dx \right| \, dt \notag
\\
&\leq \left(  \kappa_q \int_0^1 \int_{\T^3} |\nabla \theta_{\kappa_q, j}|^2 \, dx dt \right)^{\sfrac{1}{2}} \left(  \kappa_q \int_0^1 \int_{\T^3} |\nabla \overline{\psi}_{\kappa_q, i}|^2 \, dx dt \right)^{\sfrac{1}{2}} \| {\theta}_{\kappa_q, i} \|_{L^{\infty}(\T^3)} \| \overline{\psi}_{\kappa_q, j} \|_{L^{\infty}(\T)} \\
&\lesssim \kappa_q^{\sfrac{1}{2}} \| \theta_{\initial, j} \|_{L^2(\T^2)} \| \nabla \overline{\psi}_{\kappa_q, i}(0, \cdot) \|_{L^{\infty}(\T)} \overset{\eqref{item:AboutTheC1NormOfTheSecondCollectionsOfCutoffs}}{\lesssim} \kappa_q^{\sfrac{1}{2}} a_0^{-1} a_q^{- \sfrac{\gamma}{2}} \lesssimlarge a_q^{\sfrac{1}{2}} \,,  \notag
\end{align}
\begin{align} \label{eq:EstimateOfA_4}
B_4 &=  \kappa_q \int_0^1 \left| \int_{\T^3} (\nabla \theta_{\kappa_q, i} \cdot \nabla \theta_{\kappa_q, j}) (\overline{\psi}_{\kappa_q, i} \overline{\psi}_{\kappa_q, j}) \, dx \right| \, dt \notag 
\\
&\leq \left(  \kappa_q \int_0^1 \int_{\T^3} |\nabla \theta_{\kappa_q, i}|^2 \, dx dt \right)^{\sfrac{1}{2}} \left( \kappa_q \int_0^1 \int_{\T^3} |\nabla \theta_{\kappa_q, j}|^2 \, dx dt \right)^{\sfrac{1}{2}} \| \overline{\psi}_{\kappa_q, i} \overline{\psi}_{\kappa_q, j} \|_{L^{\infty}((0,1) \times \T)}
\\
&\leq \| \theta_{\initial, i}(0, \cdot) \|_{L^2(\T^3)} \| \theta_{\initial, j}(0, \cdot) \|_{L^2(\T^3)} \| \overline{\psi}_{\kappa_q, i} \overline{\psi}_{\kappa_q, j} \|_{L^{\infty}((0,1) \times \T)} \overset{\eqref{eq:TwoNonAdjacentsCutoffsRemainFarAppartSomehow}}{\lesssim} a_q^{\gamma} \kappa_q a_q^{- 2 \gamma} \leq a_q \,. \notag 
\end{align}
The upper bound on $B_3$ is the same of $B_2$,
hence, due to \eqref{eq:EstimateOfA_1}, \eqref{eq:EstimateOfA_2} and \eqref{eq:EstimateOfA_4} we find that
\begin{equation}\label{eq:BoundOnTheQuatitityOverlineII-ij}
 \left\| \overline{II}_{(i,j)} \right\|_{L^1((0,1))} \leq B_1 + B_2 + B_3 + B_4 \lesssimlarge a_q^{\sfrac{1}{2}}.
\end{equation}
Now, in order to use this estimate to bound the terms of $II$ in $L^1((0,1))$, we will estimate $\| {II}_{(i,j)} - \overline{II}_{(i,j)} \|_{L^1((0,1))}$.
With this goal in mind, we first observe that
\begin{equation}\label{eq:EstimateOfTheAnomalousDissipationForThetaBar}
 \left(  \kappa_{q} \int_0^1 \int_{\T^3} \left| \nabla \overline{\theta}_{\kappa_q, j} \right|^2 \, dx \, dt \right)^{\sfrac{1}{2}} \overset{\eqref{eq:Useful-Stability-Between-Theta-And-Approximation}}{\lesssimlarge} a_q^{\sfrac{1}{4}} + \left(  \kappa_{q} \int_0^1 \int_{\T^3} \left| \nabla {\theta}_{\kappa_q, j} \right|^2 \, dx \, dt \right)^{\sfrac{1}{2}} \lesssimlarge a_q^{\sfrac{1}{4}} + a_q^{\sfrac{\gamma}{2}} \lesssimlarge a_q^{\sfrac{\gamma}{2}}. 
\end{equation}
Thus,
\begin{align*}
\| {II}_{(i,j)} & - \overline{II}_{(i,j)} \|_{L^1((0,1))} =  \kappa_q \int_0^1 \left| \int_{\T^3} \nabla {\theta}_{\kappa_q, i} \cdot \nabla {\theta}_{\kappa_q, j} - \nabla \overline{\theta}_{\kappa_q, i} \cdot \nabla \overline{\theta}_{\kappa_q, j} \, dx \right| \, dt \\
&\leq  \kappa_q \int_0^1 \int_{\T^3} \left| \nabla {\theta}_{\kappa_q, i} \cdot \nabla ({\theta}_{\kappa_q, j} - \overline{\theta}_{\kappa_q, j}) \right| \, dx \, dt
+  \kappa_q \int_0^1 \int_{\T^3} \left| \nabla ({\theta}_{\kappa_q, i} - \overline{\theta}_{\kappa_q, i}) \cdot \nabla \overline{\theta}_{\kappa_q, j} \right| \, dx \, dt
 \\
&\leq \max_{i,j} \left(  \kappa_{q} \int_0^1 \int_{\T^3} \left| \nabla {\theta}_{\kappa_q, i} \right|^2 + \left| \nabla {\overline \theta}_{\kappa_q, i} \right|^2 \, dx \, dt \right)^{\sfrac{1}{2}} \left(  \kappa_{q} \int_0^1 \int_{\T^3} \left| \nabla ({\theta}_{\kappa_q, j} - \overline{\theta}_{\kappa_q, j}) \right|^2 \, dx \, dt \right)^{\sfrac{1}{2}}  
\\
&\overset{\eqref{eq:Useful-Stability-Between-Theta-And-Approximation}, \eqref{eq:EstimateOfTheAnomalousDissipationForThetaBar}}{\lesssimlarge} a_q^{\sfrac{\gamma}{2}} a_q^{\sfrac{1}{4}} + a_q^{\sfrac{\gamma}{2}} a_q^{\sfrac{1}{4}} \lesssimlarge a_q^{\sfrac{1}{4} + \sfrac{\gamma}{2}}.
\end{align*}
Hence
\[
 \left\| {II}_{(i,j)} - \overline{II}_{(i,j)} \right\|_{L^1((0,1))} \lesssimlarge a_q^{\sfrac{1}{4}}
\]
and thus, combined with \eqref{eq:BoundOnTheQuatitityOverlineII-ij}, we find
\begin{equation}\label{eq:BoundOnL1NormOfII(i,j)}
 \left\| {II}_{(i,j)} \right\|_{L^1((0,1))} \lesssimlarge a_q^{\sfrac{1}{4}}.
\end{equation}
For this last inequality, we deduce that
\begin{equation}
 \left\| II \right\|_{L^1((0,1))} \leq \sum_{|i - j| > 1} \left\| {II}_{(i,j)} \right\|_{L^1((0,1))} \overset{\eqref{eq:BoundOnL1NormOfII(i,j)}}{\lesssimlarge} N_q^2 a_q^{\sfrac{1}{4}} \overset{\eqref{item:BoundOnTheNumberOfCutoffs}}{\lesssimlarge} a_q^{\sfrac{1}{4} - 2\gamma} \lesssimlarge a_q^{\sfrac{1}{8}}.
\end{equation}
This ends the proof of Lemma~\ref{lemma:EstimateOfQuantitiy-II}.
\end{proof}

\begin{proof}[Proof of Lemma~\ref{lemma:EstimateOfQuantitiy-I}]
We estimate the terms of $I$ given by
\[
 I (t ) = \sum_{i=1}^{N_q} I_{i}(t) = \sum_{i=1}^{N_q} 2 \kappa_q \int_{\T^3} \nabla \theta_{\kappa_q, i}(t,x) \cdot \nabla \theta_{\kappa_q, i+1}(t,x) \, dx
\]
in $L^1((0,1))$.
Then
\begin{align*}
 \left\| I_i \right\|_{L^1((0,1))} &= \underbrace{ \| I_i(\cdot ) \mathbbm{1}_{[0, t_{q,i}]}(\cdot ) \|_{L^1}}_{= \tilde{B}_1}
 + \underbrace{ \| I_i  (\cdot ) \mathbbm{1}_{[t_{q,i}, t_{q,i+1} + \tilde{t}_q]}(\cdot ) \|_{L^1}}_{= \tilde{B}_2} + 
 + \underbrace{ \| I_i  (\cdot ) \mathbbm{1}_{[t_{q,i+1} + \tilde{t}_q,1]}(\cdot ) \|_{L^1}}_{= \tilde{B}_3}
\end{align*}
Now we estimate the quantities $\tilde{B}_1$ and $\tilde{B}_3$ which we expect to be very small compared to $a_q^{\gamma}$. With that goal in mind, we first observe that
\begin{equation}\label{eq:AboutDissipationBeforeCriticalTime}
\begin{split}
 2 \kappa_q \int_0^{t_{q,i}} \int_{\T^3} |\nabla \theta_{\kappa_q, i}|^2 \, dx dt &= \| \theta_{\initial, i} \|_{L^2(\T^3)}^2 - \| \theta_{\kappa_q, i}(t_{q,i}, \cdot) \|_{L^2(\T^3)}^2 \\
   &= \left( \| \theta_{\initial, i} \|_{L^2(\T^3)} + \| \theta_{\kappa_q, i}(t_{q,i}, \cdot) \|_{L^2(\T^3)} \right) \left( \| \theta_{\initial, i} \|_{L^2(\T^3)} - \| \theta_{\kappa_q, i}(t_{q,i}, \cdot) \|_{L^2(\T^3)} \right)\\
   &= \left( \| \theta_{\initial, i} \|_{L^2(\T^3)} + \| \theta_{\kappa_q, i}(t_{q,i}, \cdot) \|_{L^2(\T^3)} \right) \left( \| \theta_{0, i}(t_{q,i}, \cdot) \|_{L^2(\T^3)} - \| \theta_{\kappa_q, i}(t_{q,i}, \cdot) \|_{L^2(\T^3)} \right)\\
     &\leq \left( \| \theta_{\initial, i} \|_{L^2(\T^3)} + \| \theta_{\kappa_q, i}(t_{q,i}, \cdot) \|_{L^2(\T^3)} \right) \| (\theta_{0, i} - \theta_{\kappa_q, i})(t_{q,i}, \cdot) \|_{L^2(\T^3)} \overset{\eqref{stability:theta_kq-theta}}{\lesssim} a_q^{\sfrac{\gamma}{2}} a_q^{\frac{\gamma + \eps}{2}}. \\
\end{split}
\end{equation}
It then follows that
\begin{align*}
 \tilde{B}_1 &= 2 \kappa_q \int_0^{t_{q,i}} \left| \int_{\T^3} \nabla \theta_{\kappa_q, i}(t,x) \cdot \nabla \theta_{\kappa_q, i+1}(t,x) \, dx \right| \, dt \\
  &\leq  \left( 2 \kappa_q \int_0^{t_{q,i}} \int_{\T^3} |\nabla \theta_{\kappa_q, i}|^2 \, dx dt \right)^{\sfrac{1}{2}} \left( 2 \kappa_q \int_0^{t_{q,i}} \int_{\T^3} |\nabla \theta_{\kappa_q, i+1}|^2 \, dx dt \right)^{\sfrac{1}{2}}\\
  &\overset{\eqref{eq:AboutDissipationBeforeCriticalTime}}{\lesssim} a_q^{\sfrac{\gamma}{4}} a_q^{\frac{\gamma + \eps}{4}} a_q^{\sfrac{\gamma}{2}} = a_q^{\gamma + \sfrac{\eps}{4}};\\
  \tilde{B}_3 &= 2 \kappa_q \int_{t_{q,i+1} + \tilde{t}_q}^1 \left| \int_{\T^3} \nabla \theta_{\kappa_q, i}(t,x) \cdot \nabla \theta_{\kappa_q, i+1}(t,x) \, dx \right| \, dt \\
  &\leq  \left( 2 \kappa_q \int_{t_{q,i+1} + \tilde{t}_q}^1 \int_{\T^3} |\nabla \theta_{\kappa_q, i}|^2 \, dx dt \right)^{\sfrac{1}{2}} \left( 2 \kappa_q \int_{t_{q,i+1} + \tilde{t}_q}^1 \int_{\T^3} |\nabla \theta_{\kappa_q, i+1}|^2 \, dx dt \right)^{\sfrac{1}{2}}
  \\
  &\leq  \| \theta_{\initial, i} \|_{L^2(\T^3)} \| \theta_{\kappa_q, i+1} (t_{q,i+1} + \tilde{t}_q, \cdot) \|_{L^2(\T^3)} \overset{\eqref{eq:AutonomousNormalGoalOfStep3}}{\lesssim} a_0^{\frac{\eps \delta}{2}} a_q^{\gamma}.
\end{align*}
To estimate $\tilde{B}_2$, we proceed by a sequence of approximations. We will reuse the functions $\tilde{\theta}_{\kappa_q, j}$,  $f_{\kappa_q, j}$ and $\psi_{\kappa_q, j}$ introduced in Step 3 of the proof (see \eqref{eq:FirstApproxPDEFirstNotNS}, \eqref{eq:2DHeatEquation} and \eqref{eq:1DAdvectionDiffusionEquationInStep3}). We note that
\begin{align*}
\tilde{B}_2 &= 2 \kappa_q \int_{t_{q,i}}^{t_{q,i+1} + \tilde{t}_q} \left| \int_{\T^3} \nabla \theta_{\kappa_q, i} \cdot \nabla \theta_{\kappa_q, i+1} \, dx \right| \, dt 
\leq 2 \kappa_q \int_{t_{q,i}}^{t_{q,i+1} + \tilde{t}_q} \left| \int_{\T^3} \nabla (\theta_{\kappa_q, i} - \tilde{\theta}_{\kappa_q, i})\cdot \nabla \theta_{\kappa_q, i+1} \, dx \right| \, dt 
\\
& \quad + 2 \kappa_q \int_{t_{q,i}}^{t_{q,i+1} + \tilde{t}_q} \left| \int_{\T^3} \nabla \tilde{\theta}_{\kappa_q, i} \cdot \nabla ( \theta_{\kappa_q, i+1} - \tilde{\theta}_{\kappa_q, i+1}) \, dx \right| \, dt   + 2 \kappa_q \int_{t_{q,i}}^{t_{q,i+1} + \tilde{t}_q} \left| \int_{\T^3} \nabla \tilde{\theta}_{\kappa_q, i} \cdot \nabla \tilde{\theta}_{\kappa_q, i+1} \, dx \right| \, dt \,.
\end{align*}
We now estimate the 3 summands. The first term is estimated observing that \eqref{e:energy-equality-global} implies
\[
 2 \kappa_q \int_{0}^{1} \int_{\T^3} |\nabla \theta_{\kappa_q, i+1}|^2 \, dx dt \leq \| \theta_{\initial, i+1} \|_{L^2(\T^3)}^2 \lesssim a_q^\gamma
\]
for all $i$ and therefore
\begin{align*}
  & 2 \kappa_q \int_{t_{q,i}}^{t_{q,i+1} + \tilde{t}_q} \left| \int_{\T^3} \nabla (\theta_{\kappa_q, i} - \tilde{\theta}_{\kappa_q, i})\cdot \nabla \theta_{\kappa_q, i+1} \, dx \right| \, dt \\
 &\leq  \left( 2 \kappa_q \int_{t_{q,i}}^{t_{q,i +1} + \tilde{t}_q} \int_{\T^3} |\nabla (\theta_{\kappa_q, i} - \tilde{\theta}_{\kappa_q, i})|^2 \, dx dt \right)^{\sfrac{1}{2}} \left( 2 \kappa_q \int_{0}^{1} \int_{\T^3} |\nabla \theta_{\kappa_q, i+1}|^2 \, dx dt \right)^{\sfrac{1}{2}} \overset{\eqref{eq:TildeApproxOfHatL2}}{\lesssimlarge} a_0^{\frac{\eps \delta}{2}} a_q^{\sfrac{\gamma}{2}} a_q^{\sfrac{\gamma}{2}} \,.
\end{align*} 
 where the last holds for $q$ large enough in order to have $ t_{q, i+1}  + \tilde{t}_q \leq  t_{q, i} + \sfrac{\overline{t}_q}{2}$ (see Section \ref{subsec:ParmeterDefinitions} and \eqref{d:time_t_j}).
Similarly, we can estimate the second term.
Finally, recalling that $\tilde{\theta}_{\kappa_q, i}$, we have $\tilde{\theta}_{\kappa_q, i} = f_{\kappa_q, i} \psi_{\kappa_q, i}$, we estimate the last summand as follows 
\begin{align*}
 2 \kappa_q \int_{t_{q,i}}^{t_{q,i+1} + \tilde{t}_q} \left| \int_{\T^3} \nabla \tilde{\theta}_{\kappa_q, i} \cdot \nabla \tilde{\theta}_{\kappa_q, i+1} \, dx \right| \, dt 
& \leq \underbrace{2 \kappa_q \int_{t_{q,i}}^{t_{q,i+1} + \tilde{t}_q} \int_{\T^3} |\nabla f_{\kappa_q, i}| |\nabla f_{\kappa_q, i+1}| |\psi_{\kappa_q, i}| |\psi_{\kappa_q, i+1}| \, dx \, dt}_{= \tilde{B}_{2,1}}\\
&+ \underbrace{2 \kappa_q \int_{t_{q,i}}^{t_{q,i+1} + \tilde{t}_q} \int_{\T^3} |\nabla f_{\kappa_q, i}| |f_{\kappa_q, i+1}| |\psi_{\kappa_q, i}| |\nabla \psi_{\kappa_q, i+1}| \, dx \, dt}_{= \tilde{B}_{2,2}} \\
&+\underbrace{2 \kappa_q \int_{t_{q,i}}^{t_{q,i+1} + \tilde{t}_q} \int_{\T^3} |f_{\kappa_q, i}| |\nabla f_{\kappa_q, i+1}| |\nabla \psi_{\kappa_q, i}| |\psi_{\kappa_q, i+1}| \, dx \, dt}_{= \tilde{B}_{2,3}} \\
&+\underbrace{2 \kappa_q \int_{t_{q,i}}^{t_{q,i+1} + \tilde{t}_q} \int_{\T^3} |f_{\kappa_q, i}| |f_{\kappa_q, i+1}| |\nabla \psi_{\kappa_q, i}| |\nabla \psi_{\kappa_q, i+1}| \, dx \, dt}_{= \tilde{B}_{2,4}}.
\end{align*}
Before estimating each one of the new quantities defined above, we make the following observation. Let $S_{q,i}(t)$ be the time-dependent interval defined by $S_{q,i}(t) = (z_i + t - a_0 a_q^{\gamma}, z_i + t + \sfrac{a_q^{\gamma}}{8} + a_0 a_q^{\gamma})$. By Lemma~\ref{lemma:TailEstimates1DAdvectionDiffusion}, we find that for all $t \in [0,1]$
\[
 \| \psi_{\kappa_q, i}(t, \cdot) \|_{L^{\infty}(S_{q,i}(t)^c)} \lesssim \exp\left( \frac{(a_0 a_q^{\gamma})^2}{{4} \kappa_q t} \right) \leq \frac{{4} \kappa_q t}{a_0^2 a_q^{2 \gamma}} \lesssim \frac{a_q}{a_0^2} \lesssimlarge a_0 a_q^{\gamma}.
\]
Notice that by \eqref{eq:AboutAdjacentSupports} $|S_{q,i}(t) \cap S_{q, i + 1}(t)| \leq 3 a_0 a_q^{\gamma}$ for all $t \in [0,1]$.
Hence for all $t \in [0,1]$
\begin{equation}\label{eq:AboutTwoAdjacentPsis}
\begin{split}
 \int_{\T} |\psi_{\kappa_q, i}| |\psi_{\kappa_q, i+1}|(t,z) \, dz &\leq \int_{S_{q,i}(t) \cap S_{q, i+1}(t)} |\psi_{\kappa_q, i}| |\psi_{\kappa_q, i+1}|(t,z) \, dz 
 \\
  & \quad + \int_{\T \setminus (S_{q,i}(t) \cap S_{q, i+1}(t))} |\psi_{\kappa_q, i}| |\psi_{\kappa_q, i+1}|(t,z) \, dz \lesssimlarge a_0 a_q^{\gamma} + a_0 a_q^{\gamma} \,.
\end{split}
\end{equation}
First, we estimate 
\begin{align*}
    \tilde{B}_{2,1} &= 2 \kappa_q \int_{t_{q,i}}^{t_{q,i+1} + \tilde{t}_q} \int_{\T^3} |\nabla f_{\kappa_q, i}| |\nabla f_{\kappa_q, i+1}| |\psi_{\kappa_q, i}| |\psi_{\kappa_q, i+1}| \, dx \, dt \\
 &\overset{\eqref{eq:LInftyNormOfTheGradientOfTheMollifiedChessFunction}}{\lesssim} \kappa_q a_0^{-6 \eps \delta} a_q^{-2} \int_{t_{q,i}}^{t_{q,i+1} + \tilde{t}_q} \int_{\T^3} |\psi_{\kappa_q, i}| |\psi_{\kappa_q, i+1}| \, dx \, dt \overset{\eqref{eq:AboutTwoAdjacentPsis}}{\lesssimlarge} \kappa_q a_0^{-6 \eps \delta} a_q^{-2} \tilde{t}_q a_0 a_q^{\gamma} \lesssimlarge a_0^{1 - 7 \eps \delta} a_q^{\gamma}\,.
\end{align*}
Finally, using that $\| \nabla f_{\kappa_q, i} \|_{L^\infty(\T^3)} \overset{\eqref{eq:LInftyNormOfTheGradientOfTheMollifiedChessFunction}}{\lesssim} a_0^{-3 \eps \delta} a_q^{-1}$ and $\| \nabla \psi_{\kappa_q, i} \|_{L^\infty(\T)} \lesssim a_0^{-1} a_q^{- \gamma}$ we have
\begin{align*} 
\tilde{B}_{2,2}  + \tilde{B}_{2,3} + \tilde{B}_{2,4} \lesssim \kappa_q a_0^{- (1 + 3 \eps \delta)} a_q^{- (1 + \gamma)} + \kappa_q a_0^{- 2} a_q^{- 2 \gamma}\overset{\eqref{d:k_q} }{\lesssimlarge} a_q^{1/2} \,.
\end{align*}

Hence,
\[
 \| I_i \|_{L^1((0,1))} \leq \tilde{B}_1 + \tilde{B}_2 + \tilde{B}_3 \lesssimlarge a_0^{\frac{\eps \delta}{2}} a_q^{\gamma}
\]
and therefore 
\[
 \| I \|_{L^1((0,1))} \leq \sum_{i = 1}^{N_q} \| I_i \|_{L^1((0,1))} \overset{\eqref{item:BoundOnTheNumberOfCutoffs}}{\lesssimlarge} a_0^{\frac{\eps \delta}{2}}\,.
\] 
\end{proof} 

\begin{proof}[Proof of Lemma~\ref{lemma:EstimatesOfTheQuantities-E2-E3}]
By \eqref{eq:AboutDissipationBeforeCriticalTime} in the proof of Lemma~\ref{lemma:EstimateOfQuantitiy-I}
\begin{align*}
 \| S_{\kappa_q}^{(2)} \|_{L^1((0,1))} &\leq  \kappa_q \sum_{i = 1}^{N_q} \int_0^{t_{q,i}} \int_{\T^3} |\nabla \theta_{\kappa_q, i}|^2 \, dx \, dt \overset{\eqref{eq:AboutDissipationBeforeCriticalTime}}{\lesssim} N_q a_q^{\gamma + \sfrac{\eps}{2}} \lesssim a_q^{\sfrac{\eps}{2}}.
\end{align*}
By \eqref{eq:AutonomousNormalGoalOfStep3},
\begin{align*}
\| S_{\kappa_q}^{(3)} \|_{L^1((0,1))} &\leq  \kappa_q \sum_{i = 1}^{N_q} \int_{t_{q,i} + \tilde{t}_q}^1 \int_{\T^3} |\nabla \theta_{\kappa_q, i}(t,x)|^2 \, dx \, dt \\
&\leq \sum_{i = 1}^{N_q} \| \theta_{\kappa_q,i}(t_{q,i} + \tilde{t}_q, \cdot) \|_{L^2(\T^3)}^2 \overset{\eqref{eq:AutonomousNormalGoalOfStep3}}{\lesssim} N_q a_0^{\eps \delta} a_q^{\gamma} \lesssim a_0^{\eps \delta} \,.
\end{align*}
\end{proof}

\begin{proof}[Proof of Lemma~\ref{lemma:EstimateOfQuatities-E6-E7}]
Recall that $\mathcal{J}_{q,i} = [t_{q,i}, t_{q,i} + \tilde{t}_q]$ and note that due to \eqref{e:energy-equality-global}, we have
\[
 2 \kappa_q  \int_{\mathcal{J}_{q,i}} \int_{\T^3} |\nabla {\theta}_{\kappa_q, i}|^2 \, dx \, dt \leq \|\theta_{\kappa_q , i} (0, \cdot ) \|_{L^2 (\T^3)}^2  \lesssim a_q^{\gamma}.
\]
Hence
\begin{align*}
\| S_{\kappa_q}^{(4)} \|_{L^1((0,1))} &\leq  \kappa_q \sum_{i = 1}^{N_q} \int_{\mathcal{J}_{q,i}} \left| \int_{\T^3} \nabla {\theta}_{\kappa_q, i}(t,x) \cdot \nabla ({\theta}_{\kappa_q, i} - \tilde{\theta}_{\kappa_q, i})(t,x)\, dx \right| \, dt \\
&\leq \sum_{i = 1}^{N_q} \left(  \kappa_q  \int_{\mathcal{J}_{q,i}} \int_{\T^3} |\nabla {\theta}_{\kappa_q, i}|^2 \, dx \, dt \right)^{\sfrac{1}{2}} \left(  \kappa_q \int_{\mathcal{J}_{q,i}} \int_{\T^3} |\nabla ({\theta}_{\kappa_q, i} - \tilde{\theta}_{\kappa_q, i})|^2 \, dx \, dt \right)^{\sfrac{1}{2}} \\
&\overset{\eqref{eq:TildeApproxOfHatL2}}{\lesssim}  N_q a_q^{\sfrac{\gamma}{2}} a_q^{\sfrac{\gamma}{2}} a_0^{\varepsilon \delta /2}  \overset{\eqref{item:BoundOnTheNumberOfCutoffs}}{\lesssim} a_0^{\sfrac{\eps \delta}{2}}.
\end{align*}
Similarly $\| S_{\kappa_q}^{(5)} \|_{L^1((0,1))} \lesssim a_0^{\sfrac{\eps \delta}{2}}$.
\end{proof}

\section{Proof of Theorem~\ref{t_Onsager}}\label{sec:4D-NS-Proof}

The aim of this section is to prove Theorem~\ref{t_Onsager}. We start by proving the key lemma which says that the last component of $v_\nu$ solution to the 4d Navier--Stokes equations with body force $F_\nu$ exhibits anomalous dissipation with further properties.  In the last subsection, we use this lemma to prove Theorem~\ref{t_Onsager}.

\subsection{Strategy of the proof}
The strategy of the proof is to use the $(3+\frac{1}{2})-d$ Navier--Stokes equations with a body force. More precisely, we consider the $4d$ Navier--Stokes equations with body force acting on the first three components with variable $(t, x, y, z, w) \in (0,1) \times \T^4$, where all the system is independent on the variable $w$, which reduces to the coupled system 
    \[
    \begin{cases}
        \partial_t u_\nu + u_\nu \cdot \nabla u_\nu + \nabla p_\nu = \nu \Delta u_\nu + F_\nu
            \\
        \partial_t \theta_\nu + u_\nu \cdot \nabla \theta_\nu = \nu \Delta \theta_\nu \,,
        \\
        u_\nu (0, \cdot ) = u_{\nu, \initial} (\cdot ), \quad \theta_{\nu} (0, \cdot ) = \theta_{\nu, \initial } (\cdot ) \,,
        \\
        \diver u_\nu =0 \,,
    \end{cases} 
    \]
where the unknown are $u_\nu : [0,1] \times \T^3 \to \R^3$, $\theta_\nu: [0,1] \times \T^3 \to \R$ and $p_\nu : [0,1] \times \T^3 \to \R $ and $F_\nu : [0,1] \times \T^3 \to \R^3$, $u_{\nu, \initial } : \T^3 \to \R^3$ and $\theta_{\nu, \initial}: \T^3 \to \R$ are given. The proof will rely on the objects defined in Section \ref{subsec:ConstructionsForNavierStokes4D}.

\subsection{Anomalous dissipation lemma}\label{subsec:LemmaAnomalousDissipationSpecialSequence}
The goal of this subsection is to prove the following lemma.
\begin{lemma}\label{lemma:LemmaAnomalousDissipationSpecialSequence}
 Let $\beta > 0$. Consider the collection of autonomous divergence-free velocity fields $\{ u_{\nu} \}_{\nu \geq 0}$ defined in Subsection~\ref{subsec:ConstructionsForNavierStokes4D} and the initial datum $\theta_{\initial}$ defined in Subsection~\ref{subsec:ConstructionInitialDatum}. Under the assumption that $a_0$ is selected small enough, everything that follows holds: The sequence of unique solutions $\{ \theta_{\nu} \}_{\nu \geq 0}$ of the advection-diffusion equations
 \begin{equation}\label{eq:ADV-DIFF-FOR-NS}
 \left\{
 \begin{array}{ll}
 \partial_t {\theta}_{\nu} + u_{\nu} \cdot \nabla {\theta}_{\nu} = \nu \Delta {\theta}_{\nu}; \\
 {\theta}_{\nu} (0,x,y,z) = {\theta}_{\initial}(x,y,z); \\
 \end{array}
 \right.
\end{equation}
exhibit anomalous dissipation, i.e.
\begin{equation}
\limsup_{\nu \to 0} \nu \int_0^1 \int_{\T^3} |\nabla \theta_{\nu}|^2 \, dx dt > 0.
\end{equation}
In addition, there exists a measure $\mu \in \mathcal{M} ((0,1) \times \T^3)$ such that  up to not relabelled subsequences
\begin{equation}
  \nu |\nabla \theta_{\nu}|^2 \weak \mu
\end{equation}
in the sense of measures on $(0,1) \times \T^3$ and the measure $\mu$  satisfies $\| \mu \|_{TV} \geq 1/4$ and
\begin{equation}\label{eq:NS-Lemma-H-Minus-1-Norm}
  \left \| \frac{1}{2} \partial_t |\theta_0|^2  + \diver (u \frac{|\theta_0|^2}{2}) + \mu \right \|_{H^{-1}((0,1) \times \T^3)} \leq \beta \,,
\end{equation}
where $\theta_0$ is the unique solution of \eqref{eq:ADV-DIFF-FOR-NS} with $\nu = 0$ and initial datum $\theta_{\initial}$. Moreover, $\mu_T = \pi_{\#} \mu$, where $\pi$ is the projection in time map $\pi : (0,1) \times \T^3 \to (0,1)$, has a non-trivial absolutely continuous part w.r.t. the Lebesgue measure $\mathcal{L}^1$ and its singular part is such that $\| \mu_{T, \text{sing}} \|_{TV} \leq \beta$.
Furthermore, $\theta_{\nu} \weak \theta_0$ weakly$^{\ast}$ in $L^{\infty}((0,1) \times \T^3)$ and (up to not relabelled subsequences) we have $\| \theta_{\nu} - \theta_0 \|_{L^{\infty}((0,1) \times \T^3)} < \beta$ and
\[
 e(t) = \int_{\T^3} | \theta_0 (t, x )|^2 dx
\]
is smooth in  $[0,1]$  and it is such that $e(1) < e(0)$.
\end{lemma}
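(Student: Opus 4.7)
The plan is to mimic, essentially verbatim, the six-step proof of Theorem~\ref{t_anomalous_autonomous}, using the same partition of unity $\{\chi_j\}_{j=1}^{N_q}$ in the $z$-variable, the same decomposition of the initial datum $\theta_{\initial} = \sum_{j=1}^{N_q} \theta_{\initial,j}$, and the same intermediate objects (the product approximations $\tilde{\theta}_{\nu_q, j}(t,x,y,z) = \psi_{\nu_q,j}(t,z) f_{\nu_q, j}(t,x,y)$ given by a $2d$ heat equation and a $1d$ advection--diffusion equation). The decisive observation is that $\nu_q = \kappa_q$ as defined in \eqref{d:k_q}, and that for $\nu \in (\nu_{q+1}, \nu_q]$ the field $u_\nu$ agrees with $u$ on $\{z < 1/2 - T_q + \sfrac{a_q^\gamma}{4}\}$, while on the complement $u_\nu \equiv (0,0,1)$. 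Since $\theta_{\initial}$ is supported in $\T^2 \times (0, 1/4)$ and the third component of $u$ is already constantly $1$, the truncation only modifies the first two components of the drift \emph{after} the mass associated with each $\theta_{\nu_q, j}$ has entered the decay region beyond $z = 1/2 - T_q + \sfrac{a_q^{\gamma}}{8}$, i.e.\ after the critical time $t_{q,j}$. In that region, the transport part simply becomes pure $z$-translation, which is exactly the structure used for the approximation $\tilde{\theta}_{\nu_q,j}$ in Substep 3.2 of the proof of Theorem~\ref{t_anomalous_autonomous}.

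Concretely, I would proceed as follows. In Step~1, I define $\theta_{\nu,j}$ as the solution of \eqref{eq:ADV-DIFF-FOR-NS} with initial datum $\theta_{\initial, j}$, so by linearity $\theta_\nu = \sum_j \theta_{\nu,j}$, and $\theta_{0,j} = \theta_\stat(x,y,z)\varphi_{\initial}(z-t)\chi_j(z-t)$ solves the transport equation with drift $u_0 = u$ (uniqueness by Lemma~\ref{lemma:uniqueness}). In Step~2, I apply Lemma~\ref{lemma:AdvectionDiffusionAndTransport} together with Lemma~\ref{lemma:EstimateOnTheGradientOfTheSolutionToTheTransportEquation} to get $\|\theta_{\nu_q, j}(t,\cdot) - \theta_{0,j}(t,\cdot)\|_{L^2}^2 \lesssim a_q^{\gamma + \varepsilon}$ for $t \leq t_{q,j}$; the argument is unchanged because $u_{\nu_q} = u$ on the support of $\theta_{0,j}$ for those times. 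In Steps~3--4 (decay of $\|\theta_{\nu_q, j}\|_{L^2}$ and of $\|\theta_{\nu_q}\|_{L^2}$) and in Step~5 ($L^\infty_t L^2_x$-closeness to $\theta_0$ and smoothness of $e(t)$), the estimates of Theorem~\ref{t_anomalous_autonomous} transfer directly: the tail estimate Corollary~\ref{lemma:TailEstimates3DAdvectionDiffusion} only uses $u_{\nu_q}^{(3)} \equiv 1$; the enhanced diffusion Lemma~\ref{lemma:EnhancedDiffusion} is applied to $f_{\nu_q,j}$ which solves a pure $2d$ heat equation and is not affected by the truncation; and the local/global energy identity of Lemma~\ref{lemma:local-energy} holds verbatim for $\theta_{\nu_q}$. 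Passing to the weak-$\ast$ limit in the local energy identity yields, as before, a measure $\mu \geq 0$ and an error $f_\err$ with $\|f_\err\|_{L^\infty_t L^2_x}^2 \lesssim a_0^{\varepsilon \delta}$, from which the $H^{-1}$-bound \eqref{eq:NS-Lemma-H-Minus-1-Norm} follows by choosing $a_0$ small depending on $\beta$.

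For the final statement on the absolutely continuous part, I repeat Step~6 of Theorem~\ref{t_anomalous_autonomous}: write $-\tfrac{1}{2} e_{\nu_q}'(t) = A_{\nu_q}(t) + S_{\nu_q}(t)$, with $A_{\nu_q}$ built from the square gradients $\nu_q |\nabla \tilde{\theta}_{\nu_q,j}|^2 \mathbbm{1}_{\mathcal{J}_{q,j}}$ of the product approximants, and $S_{\nu_q}$ a remainder. Lemmas~\ref{lemma:EstimateOfQuantitiy-II}, \ref{lemma:EstimateOfQuantitiy-I}, \ref{lemma:EstimatesOfTheQuantities-E2-E3}, and \ref{lemma:EstimateOfQuatities-E6-E7} give $\|S_{\nu_q}\|_{L^1} \lesssimlarge a_0^{\varepsilon\delta/2}$, while the uniform $L^\infty$-bound on $A_{\nu_q}$ (via $\|\nabla \tilde{\theta}_{\nu_q,j}(t_{q,j},\cdot)\|_{L^2}^2 \lesssim a_0^{-6\varepsilon\delta} a_q^{-2+\gamma}$ and a counting of overlapping $\mathcal{J}_{q,i}$) gives $\sup_q \|A_{\nu_q}\|_{L^\infty} < \infty$. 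Passing to subsequences, $A_{\nu_q} \overset{*}{\rightharpoonup} \mathcal{A} \in L^\infty$ and $S_{\nu_q} \overset{*}{\rightharpoonup} \mathcal{S}$ with $\|\mathcal{S}\|_{TV} \leq \beta$, so $\mu_T = \mathcal{A} + \mathcal{S}$ has nontrivial absolutely continuous part and singular part of total variation at most $\beta$. The lower bound $\|\mu\|_{TV} \geq 1/4$ follows from the global energy equality and the $L^2$-decay \eqref{eq:LTwoNormAlmostZero} transposed to $\theta_{\nu_q}$.

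The only point that requires a short verification beyond the arguments already recorded in Section~\ref{sec:proof-thB} is that the quoted lemmas truly do not see the $u$-to-$u_\nu$ truncation. This is where the main (and modest) obstacle sits: each of those lemmas is proved via estimates that at worst use $\|u\|_{L^\infty}$ or $\|\nabla_{x,y} u\|_{L^\infty(\{z < 1/2 - T_q + \overline{t}_q\})}$, and both are unchanged under the truncation, since $u_{\nu_q} = u$ on $\{z < 1/2 - T_q + \sfrac{a_q^\gamma}{4}\} \supset \{z < 1/2 - T_q + \overline{t}_q\}$ for $q$ large (by \eqref{item:EpsDeltaThree}). Outside of this set, $u_{\nu_q} \cdot \nabla = \partial_z$, which is exactly the transport present in the PDE \eqref{eq:FirstApproxPDEFirstNotNS} satisfied by $\tilde{\theta}_{\nu_q,j}$, so the identities used in the cross-term estimates remain valid. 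Finally, the weak-$\ast$ convergence $\theta_\nu \overset{\ast}{\rightharpoonup} \theta_0$ in $L^\infty$ follows by the uniform $L^\infty$ bound (maximum principle for \eqref{eq:ADV-DIFF-FOR-NS}) and Lemma~\ref{lemma:uniqueness}, exactly as in Step~5 of Theorem~\ref{t_anomalous_autonomous}.
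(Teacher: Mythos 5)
Your proposal matches the paper's own proof of this lemma. The paper explicitly states that the proof is almost verbatim that of Theorem~\ref{t_anomalous_autonomous}, and you have identified the same points of adaptation: that $u_{\nu_q}$ agrees with $u$ on the relevant region $\{z < \sfrac12 - T_q + \overline{t}_q\}$ (so that $\theta_{0,j}$ solves the transport equation with drift $u_{\nu_q}$ on $[0,t_{q,j}]$, making Step~2 go through unchanged), that $|u_{\nu_q}^{(1,2)}| \leq |u^{(1,2)}|$ pointwise (so the Substep~3.2 estimate only improves), and that the product-approximation / partition-of-unity machinery of Step~6, together with Lemmas~\ref{lemma:EstimateOfQuantitiy-II}--\ref{lemma:EstimateOfQuatities-E6-E7}, carries over with only cosmetic modifications.
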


The proof of this lemma is very similar to that of Theorem~\ref{t_anomalous_autonomous}. For the convenience of the reader, rather than just pointing out the differences between the two proofs, we give a less detailed, yet complete proof of this lemma. We refer to the proof of Theorem~\ref{t_anomalous_autonomous} whenever it is possible in order to shorten the proof.

\begin{proof}
Fix some arbitrary  $q \in m \N$. As in the proof of Theorem~\ref{t_anomalous_autonomous}, the goal is to prove that 
\begin{equation}\label{eq:LTwoNormAlmostZeroForTheSpecial3DCase}
\| {\theta}_{\nu_q}(1, \cdot ) \|_{L^{2}(\T^3)}^2 \lesssim a_0^{\eps \delta}.
\end{equation}
By selecting $a_0$ sufficiently small combined with the energy balance, we deduce that
\begin{equation*}
\limsup_{q \to \infty} \int_0^1 \| \nabla \theta_{\nu_q}(t) \|_{L^2(\T^3)}^2 \, dt > 0.
\end{equation*}
Let $\{ \chi_j \}_{j = 1}^{N_q} \subset C^{\infty}(\T)$ be the sequence of smooth functions and $\{ z_i \}_{i = 1}^{N_q} \subset \T$ the sequence of points defined in Subsection~\ref{subsec:PartitionOfUnityInZ}. The claims stated in this section will be proved immediately after this proof. Throughout this entire proof, we assume without loss of generality that $q$ is sufficiently large so that $\tilde{t}_q \leq \overline{t}_q$ thanks to \eqref{item:EpsDeltaThree}.
\\
\textbf{Step 1: Decomposition}
For each $j = 1, \ldots, N_q$, define $\theta_{\initial, j} \colon \T^3 \to \R$ as $\theta_{\initial, j}(x,y,z)  =  \theta_{\initial}(x,y,z) \chi_j(z)$. Then we define $\theta_{\nu_q, j}$ as the solution to the advection-diffusion equation
\begin{equation}
 \left\{
 \begin{array}{ll}
 \partial_t {\theta}_{\nu_q, j} + u_{\nu_q} \cdot \nabla {\theta}_{\nu_q, j} = \nu_q \Delta {\theta}_{\nu_q , j}; \\
 {\theta}_{\nu_q , j} (0,x,y,z) = {\theta}_{\initial , j}(x,y,z). \\
 \end{array}
 \right.
\end{equation}
Accordingly, we also specify that ${\theta}_{0,j } \colon [0,1/2] \times \T^3 \to \R$ defined as ${\theta}_{0,j}(t,x,y,z) = \theta_{\stat }(x,y,z) \varphi_{\initial} (z-t) \chi_j(z - t)$  is the unique solution  to the advection equation (see Lemma \ref{lemma:uniqueness})
\begin{equation}
 \left\{
 \begin{array}{ll}
 \partial_t {\theta}_{0,j} + u \cdot \nabla {\theta}_{0,j} = 0; \\
 {\theta}_{0,j} (0,x,y,z) = {\theta}_{\initial, j}(x,y,z). \\
 \end{array}
 \right.
\end{equation}
\textbf{Step 2: $L^2$-stability between $\theta_{\nu_q, j}$ and $\theta_{0, j}$ until time $t_{q,j}$.}
The goal of this step is to prove that 
\begin{equation}\label{eq:Special3DStabilityUpToCertainTime}
 \| {\theta}_{\nu_q, j}(t , \cdot ) - {\theta}_{0,j}(t, \cdot ) \|_{L^{2}(\T^3)} \lesssim a_q^{\gamma + \eps} \qquad \text{for all }  t \in [0,t_{q,j}] \quad \text{for all } j =1, 2, \ldots , N_q \,.
\end{equation}
Since $\supp (\theta_{0,j}(t, \cdot)) \subset \T^2 \times (z_i + t, z_i + t + \sfrac{a_q^{\gamma}}{8})$ and $u_{\nu_q}(x,y,z) = u(x,y,z)$ for all $z \in (0, \sfrac{1}{2} - T_q + \overline{t}_q)$, we deduce that in the time interval $[0, t_{q,j}]$, ${\theta}_{0,j}$ solves the advection equation
 \begin{equation}
 \left\{
 \begin{array}{ll}
 \partial_t {\theta}_{0,j} + u_{\nu_q} \cdot \nabla {\theta}_{0,j} = 0 \text{ on } [0, t_{q,i}] \times \T^3; \\
 {\theta}_{0,j} (0,x,y,z) = {\theta}_{\initial, j}(x,y,z). \\
 \end{array}
 \right.
\end{equation}
Therefore, by Lemma~\ref{lemma:AdvectionDiffusionAndTransport}, we get
\[
 \| {\theta}_{\nu_q, j}(t, \cdot ) - {\theta}_{0,j}(t, \cdot ) \|_{L^{2}(\T^3)}^2 \leq \nu_q \int_0^t \| \nabla {\theta}_{0,j} (s, \cdot ) \|_{L^2(\T^3)}^2 \, ds \,,
\]
for all $t \in [0, t_{q,j}]$.
The right-hand side of the inequality above was already estimated in the proof of Theorem~\ref{t_anomalous_autonomous} (see Lemma~\ref{lemma:EstimateOnTheGradientOfTheSolutionToTheTransportEquation}). From these estimates, we deduce \eqref{eq:Special3DStabilityUpToCertainTime} in the exact same way as in the proof of Theorem~\ref{t_anomalous_autonomous}.
\\
\textbf{Step 3: Decay of $\| \theta_{\nu_q, j}(t, \cdot) \|_{L^2(\T^3)}$ in $[t_{q,j}, t_{q,j} + \sfrac{\tilde{t}_q}{2}]$.}
The goal of this step is to prove that
\begin{equation}\label{eq:3DSpecialGoalOfStep3}
 \| {\theta}_{\nu_q, j}(t_{q,j} + \sfrac{\tilde{t}_q}{2}, \cdot ) \|_{L^2(\T^3)}^2 \lesssim a_0^{\eps \delta} a_q^{\gamma}.
\end{equation}
This step is divided into 4 substeps following the same strategy as Step 3 of the proof of Theorem~\ref{t_anomalous_autonomous}. \\
\textbf{Substep 3.1: Almost periodicity of $\theta_{\nu_q, j}(t_{q,j}, \cdot, \cdot, \cdot)$.} By the exact same arguments as in Substep 3.1 in the proof of Theorem~\ref{t_anomalous_autonomous}, we have
\begin{equation}\label{eq:3DSpecialSymmetriesL2Distance}
 \| {\theta}_{\nu_q ,j}(t_{q,j}, \cdot, \cdot, \cdot) - \varphi_{\initial}( \cdot - t_{q,j})\chi_j(\cdot - t_{q, j}) (\theta_{\text{chess}})_{a_0^{\eps \delta}}((2a_q)^{-1} \cdot, (2a_{q})^{-1} \cdot) \|_{L^2(\T^3)}^2 \lesssim a_0^{\eps \delta} a_q^{\gamma}.
\end{equation}
for some $C_1 >0$ independent on $q \in m\N$.
 
\textbf{Substep 3.2: Approximation of ${\theta}_{\nu_q, j}$ on $[t_{q,j}, t_{q,j} + \sfrac{\tilde{t}_q}{2}]$.} Let $\tilde{\theta}_{\nu_q, j}$ be as in Substep 3.3 in the proof of Theorem~\ref{t_anomalous_autonomous}. This function solves 
\begin{equation}\label{eq:ThetaTildeForNavierStokes}
\left\{
\begin{array}{l}
\partial_t \tilde{\theta}_{\nu_q, j} + \partial_z \tilde{\theta}_{\nu_q ,j} = \nu_q \Delta \tilde{\theta}_{\nu_q ,j} \qquad \text{ on } [t_{q,j}, t_{q,j} + \sfrac{\tilde{t}_q}{2}] \times \T^3;
\\
\tilde{\theta}_{\nu_q , j}(t_{q, j}, x, y, z) = \varphi_{\initial}(z - t_{q,j})\chi_j (z - t) (\theta_{\text{chess}})_{a_0^{\eps \delta}}( (2 a_q)^{-1}  x,  (2 a_q)^{-1}  y).
\end{array}
\right.
\end{equation}
By the same computations as in Substep 3.2 of the proof of Theorem~\ref{t_anomalous_autonomous}, we find that
\begin{align*}
 \| {\theta}_{\nu_q, j}(t, \cdot ) - \tilde{\theta}_{\nu_q,j}(t, \cdot ) \|_{L^2(\T^3)}^2 & + 2 \nu_q \int_{t_{q,j}}^{t} \| \nabla ({\theta}_{\nu_q, j} - \tilde{\theta}_{\nu_q, j})(s, \cdot)\|_{L^2(\T^3)}^2 \, ds \notag 
 \\
 & \lesssim \frac{1}{\nu_q} \int_{t_{q,j}}^{t} \| u^{(1,2)} \cdot \nabla_{x,y} \tilde{\theta}_{\nu_q,j}(s, \cdot) \|_{L^2(\T^3)}^2 \, ds + \| {\theta}_{\nu_q, j}(t_{q,j}, \cdot ) - \tilde{\theta}_{\nu_q,j}(t_{q,j}, \cdot ) \|_{L^2(\T^3)}^2 \,.
\end{align*}
By construction of the function $u_{\nu_q}$, in each point of $\T^3$, we either have that $u_{\nu_q} = u$ and/or $u_{\nu_q} = 0$. Therefore, $\| u_{\nu_q}^{(1,2)} \cdot \nabla_{x,y} \tilde{\theta}_{\nu_q,j}(s) \|_{L^2(\T^3)} \leq \| u^{(1,2)} \cdot \nabla_{x,y} \tilde{\theta}_{\nu_q,j}(s) \|_{L^2(\T^3)}$. Thus, by the computations already done in Substep 3.2 in the proof of Theorem~\ref{t_anomalous_autonomous}, we deduce that
\begin{equation}\label{eq:L2DistanceBetweenHatAndTilde}
  \| {\theta}_{\nu_q, j}(t, \cdot) - \tilde{\theta}_{\nu_q,j}(t, \cdot) \|_{L^2(\T^3)}^2 + 2 \kappa_q \int_{t_{q,j}}^{t} \| ({\theta}_{\nu_q, j} - \tilde{\theta}_{\nu_q,j})(s, \cdot) \|_{L^2(\T^3)}^2 \, ds \lesssim a_0^{\eps \delta} a_q^{\gamma}.
\end{equation}
for all $t \in [t_{q,j}, t_{q,j} + \sfrac{\tilde{t}_q}{2} ]$. \\
\textbf{Substep 3.3: Dissipation of ${\theta}_{\nu_q, j}$ on $[t_{q,j}, t_{q,j} + \sfrac{\overline{t}_q}{2}]$.}
It follows immediately from Substep 3.3 of the proof of Theorem~\ref{t_anomalous_autonomous} that 
\begin{equation}\label{eq:3DSpecialDissipationOfTilde}
 \| \tilde{\theta}_{\nu_q, j}(t_{q,j} + \sfrac{\overline{t}_q}{2}, \cdot ) \|_{L^2(\T^3)}^2 \leq a_q^{\gamma + \eps}.
\end{equation}
and
by the same arguments as in Substep 3.4 of the proof of Theorem~\ref{t_anomalous_autonomous} (by using \eqref{eq:L2DistanceBetweenHatAndTilde} and \eqref{eq:3DSpecialDissipationOfTilde}) we get
\begin{equation*}
 \| {\theta}_{\nu_q, j}(t_{q,j} + \sfrac{\tilde{t}_q}{2}, \cdot ) \|_{L^2(\T^3)}^2 \lesssim a_0^{\eps \delta} a_q^{\gamma},
\end{equation*}
which proves \eqref{eq:3DSpecialGoalOfStep3}. 
\\
\textbf{Step 4: Rapid decay of the $L^2$-norm of $\theta_{\nu_q}$.}
As in Step 4 of the proof of Theorem~\ref{t_anomalous_autonomous}, using the exact same arguments, we can prove \eqref{eq:LTwoNormAlmostZeroForTheSpecial3DCase}. \\

\textbf{Step 5: Smoothness of the energy $e$ and $H^{-1}_{t,x}$ closeness.}
As in Step 5 of the proof of Theorem~\ref{t_anomalous_autonomous}, we observe that due to Lemma~\ref{lemma:uniqueness} and the fact that $\{ \theta_{\nu} \}_{\nu \geq 0}$ is uniformly bounded in $L^{\infty}((0,1) \times \T^3)$, we have $\theta_{\nu} \weak \theta_0$ in $L^{\infty}((0,1) \times \T^3)$. As in Step 5 of Theorem~\ref{t_anomalous_autonomous}, $e$ is smooth in {$[0,1]$}. Following the same approach as in Step 5 of the proof of Theorem~\ref{t_anomalous_autonomous} we prove that there exists a universal constant $C > 0$ such that
\begin{equation}
 \limsup_{q \in m \N, q \to \infty} \| \theta_{\nu_q} - \theta_0 \|_{L^{\infty}((0,1); L^2(\T^3))} \leq C a_0^{\eps \delta}.
\end{equation}
As a consequence, there exists $f_{\text{err}} \in L^{\infty}((0,1) \times \T^3)$ such that $\| f_{\text{err}} \|_{L^{\infty}((0,1); L^2(\T^3))}^2 \leq C a_0^{\eps \delta}$ and
\begin{equation}
 |\theta_{\nu_q}|^2 \weak |\theta_0|^2 + f_{\text{err}}
\end{equation}
 in $L^{\infty}((0,1) \times \T^3)$, up to not relabelled subsequences.
Therefore, due to Lemma~\ref{lemma:local-energy} and defining $\mu$ as the limit of  $ \nu_q |\nabla \theta_{\nu_q}|^2 \weak \mu$ up to subsequences with $q \in m\N$, the following identity
\begin{equation}
\frac{1}{2} \partial_t |\theta_0|^2 + \frac{1}{2} \partial_t f_{\text{err}} +\frac{1}{2} \diver(u|\theta_0|^2) + \frac{1}{2} \diver(u f_{\text{err}}) + \mu =0
\end{equation}
holds in the sense of distributions. From this, we deduce the $H^{-1}_{t,x}$ closeness. Indeed, we obtain
\begin{equation}
 \left\|\frac{1}{2} \partial_t |\theta_0|^2 + \frac{1}{2} \diver(u |\theta_0|^2) +\mu \right\|_{H^{-1}((0,1) \times \T^3)} \leq \|f_{\text{err}} + u f_{\text{err}} \|_{L^2((0,1) \times \T^3)}  \leq C a_0^{\eps \delta} < \beta \,,
\end{equation}
where the last holds thanks to the choice of $a$ in Subsection \ref{subsec:ParmeterDefinitions}.

\noindent \textbf{Step 6: Convergence behaviour of $- \frac{1}{2}  e_{\nu_q}^{\prime}$ for $q \in m \N$.}
In order to finish the proof of the lemma, we investigate the behaviour of
\[
 e_{\nu_q}^{\prime}(t) =  \nu_q \int_{\T^3} |\nabla \theta_{\nu_q}(t,x)|^2 \, dx
\]
as $q \to \infty$. By the global energy balance (see \eqref{e:energy-equality-global}) it is clear the up to subsequences $\{ e_{\nu}^{\prime} \}_{\nu}$ weak-$\ast$ converges to a measure $\mu$. As in the proof of Theorem~\ref{t_anomalous_autonomous}, we define $A_{\nu_q}, S_{\nu_q} \colon [0,1] \to \R$ as
\[
 A_{\nu_q}(t) =  \nu_q \sum_{j = 1}^{N_q} \int_{\T^3} |\nabla \tilde{\theta}_{\nu_q, j}(t,x)|^2 \mathbbm{1}_{[t_{q,j}, t_{q,j} +  {\tilde{t}_q}]}(t) \, dx \quad \text{and} \quad S_{\nu_q} = - \frac{1}{2}  e_{\nu_q}^{\prime}(t) - A_{\nu_q}(t).
\]
We will prove that 
\begin{equation}\label{eq:WhatWeWantToProveInStepSixOfTheAnomalousDissipationLemma}
 \limsup_{q \to \infty} \| S_{\nu_q} \|_{L^1((0,1))} \lesssim a_0^{\frac{\eps \delta}{2}} \quad \text{ and } \quad  \sup_q \| A_{\nu_q} \|_{L^{\infty}(0,1)} < \infty \quad  \| \mu_T \|_{TV} \geq 1/4 \,,
\end{equation}
where $\mu_T \in \mathcal{M}(0,1)$ is defined as $\mu_T = \pi_{\#} \mu   $.
From this we are able to conclude that up to subsequences $A_{\nu_q}$ weak-$\ast$ converges in $L^{\infty}$ to some $\mathcal{A} \in L^{\infty}((0,1))$ and $S_{\nu_q}$ weak-$\ast$ converges in $\mathcal{M}((0,1))$ to some measure $\mathcal{S} \in \mathcal{M}((0,1))$. 
The properties in \eqref{eq:WhatWeWantToProveInStepSixOfTheAnomalousDissipationLemma} imply that $\| \mathcal{S} \|_{TV} < \beta < 1/4$ provided that  $a_0$ is chosen to be small enough depending on $\beta$ and universal constants. Hence the absolutely continuous part of the measure $\mu = \mathcal{A} + \mathcal{S}$ is non-trivial. The proof of \eqref{eq:WhatWeWantToProveInStepSixOfTheAnomalousDissipationLemma} is divided into 4 substeps following the same strategy as Substep 6 in the proof of Theorem~\ref{t_anomalous_autonomous}.
\\
\textbf{Substep 6.1:}
We have
\[
- \frac{1}{2} e_{\nu_q}^{\prime} = \underbrace{ \nu_q \sum_{i = 1}^{N_q} \int_{\T^3} |\nabla \theta_{\nu_q, i}(t,x)|^2 \, dx}_{= A_{\nu_q}^{(1)}(t)} + \underbrace{ \nu_q \sum_{i,j = 1, i \neq j}^{N_q} \int_{\T^3} \nabla \theta_{\nu_q, i}(t,x) \cdot \nabla \theta_{\nu_q, j}(t,x) \, dx}_{= S_{\nu_q}^{(1)}(t)}.
\]
We will prove the following result:
 there exist universal constants $C>0$ and $Q \in \N$ such that for all $q \geq Q$ we have
 \begin{equation}\label{eq:EstimateOfQuatities-E1-Navier-Stokes}
  \| S_{\nu_q}^{(1)} \|_{L^1((0,1))} \leq C a_0^{\sfrac{\eps \delta}{2}}.
 \end{equation}
In order to prove this claim, it suffices to adapt the proofs of Lemmas~\ref{lemma:EstimateOfQuantitiy-II} and \ref{lemma:EstimateOfQuantitiy-I} to this slightly different context. To adapt the proof of Lemma~\ref{lemma:EstimateOfQuantitiy-II}, we can repeat the exact same proof, the only difference being that $\overline{\theta}_{\nu_q, j}$ defined in the proof would not solve \eqref{eq:1D-Adv-Diff-Cutoffs-Prod-With-Solution} but instead solves
 \begin{equation*}
 \left\{
 \begin{array}{ll}
 \partial_t \overline{\theta}_{\nu_q, j} + u_{\nu_q} \cdot \nabla \overline{\theta}_{\nu_q, j} = \kappa_q \Delta \overline{\theta}_{\nu_q, j} - 2 \kappa_q \nabla {\theta}_{\nu_q, j} \cdot \nabla \overline{\psi}_{\nu_q, j}; \\
 \overline{\theta}_{\nu_q, j}(0, \cdot) = {\theta}_{\nu_q, j}(0, \cdot). \\
 \end{array}
 \right.
\end{equation*}
 In order to adapt Lemma~\ref{lemma:EstimateOfQuantitiy-I} to this context, no other modifications than replacing \eqref{eq:AutonomousNormalGoalOfStep3} and \eqref{eq:TildeApproxOfHatL2} by \eqref{eq:3DSpecialGoalOfStep3}  and \eqref{eq:L2DistanceBetweenHatAndTilde} are needed.

\textbf{Substep 6.2:} Note that
\begin{align*}
 A_{\nu_q}^{(1)}(t) &=  \nu_q \sum_{i = 1}^{N_q} \int_{\T^3} |\nabla \theta_{\nu_q, i}(t,x)|^2 \, dx = \underbrace{ \nu_q \sum_{i = 1}^{N_q} \int_{\T^3} |\nabla \theta_{\nu_q, i}(t,x)|^2 \, dx \mathbbm{1}_{[t_{q,i}, t_{q,i} + \tilde{t}_q]}(t)}_{= A_{\nu_q}^{(2)}(t)} \\
 & \quad + \underbrace{ \nu_q \sum_{i = 1}^{N_q} \int_{\T^3} |\nabla \theta_{\nu_q, i}(t,x)|^2 \, dx \mathbbm{1}_{[0, t_{q,i})}(t)}_{= S_{\nu_q}^{(2)}(t)} + \underbrace{ \nu_q \sum_{i = 1}^{N_q} \int_{\T^3} |\nabla \theta_{\nu_q, i}(t,x)|^2 \, dx \mathbbm{1}_{(t_{q,i} + \tilde{t}_q, 1]}(t)}_{= S_{\nu_q}^{(3)}(t)}.
\end{align*}
It suffices to repeat the proof of Lemma~\ref{lemma:EstimatesOfTheQuantities-E2-E3} by replacing \eqref{eq:AutonomousNormalGoalOfStep3} and \eqref{eq:3DSpecialGoalOfStep3} to prove the following estimate.
 There exists a universal constant $C$ such that
 \begin{equation}\label{eq:EstimateOfQuatities-E2-E3-Navier-Stokes}
  \| S_{\nu_q}^{(2)} \|_{L^1((0,1))} + \| S_{\nu_q}^{(3)} \|_{L^1((0,1))} \leq C a_0^{\eps \delta}.
 \end{equation}

\textbf{Substep 6.3: }
For all $q \in m\N$ and $i = 1, \ldots, N_q$, define the interval $\mathcal{J}_{q,i} = [t_{q,i}, t_{q,i} + \tilde{t}_q]$. Observe that 
\begin{align*}
 A_{\nu_q}^{(3)}(t) &=  \nu_q \sum_{i = 1}^{N_q} \int_{\T^3} |\nabla {\theta}_{\nu_q, i}(t,x)|^2 \, dx \mathbbm{1}_{\mathcal{J}_{q,i}}(t) = \underbrace{ \nu_q \sum_{i = 1}^{N_q} \int_{\T^3} |\nabla \tilde{\theta}_{\nu_q, i}(t,x)|^2 \, dx \mathbbm{1}_{\mathcal{J}_{q,i}}(t)}_{= A_{\nu_q}(t)} \\
 &\quad + \underbrace{ \nu_q \sum_{i = 1}^{N_q} \int_{\T^3} \nabla {\theta}_{\nu_q, i}(t,x) \cdot \nabla ({\theta}_{\nu_q, i} - \tilde{\theta}_{\nu_q, i})(t,x)\, dx \mathbbm{1}_{\mathcal{J}_{q,i}}(t)}_{= S_{\nu_q}^{(4)}(t)} \\
 &\quad + \underbrace{ \nu_q \sum_{i = 1}^{N_q} \int_{\T^3} \nabla ({\theta}_{\nu_q, i} - \tilde{\theta}_{\nu_q, i})(t,x) \cdot \nabla \tilde{\theta}_{\nu_q, i}(t,x)\, dx \mathbbm{1}_{\mathcal{J}_{q,i}}(t)}_{= S_{\nu_q}^{(5)}(t)} \\
\end{align*}
It suffices to repeat the proof of Lemma~\ref{lemma:EstimateOfQuatities-E6-E7} and replacing \eqref{eq:TildeApproxOfHatL2} by \eqref{eq:ThetaTildeForNavierStokes} to prove the following estimate.
There exists a universal constant $C$ such that
 \begin{equation}\label{eq:EstimateOfQuatities-E6-E7-Navier-Stokes}
  \| S_{\nu_q}^{(4)} \|_{L^1((0,1))}, \| S_{\nu_q}^{(5)} \|_{L^1((0,1))} \leq C a_q^{\sfrac{\eps}{2}}.
 \end{equation}
Notice that $S_{\nu_q} = \sum_{\ell = 1}^{5} S_{\nu_q}^{(\ell)}$ and therefore in virtue of \eqref{eq:EstimateOfQuatities-E1-Navier-Stokes}, \eqref{eq:EstimateOfQuatities-E2-E3-Navier-Stokes} and \eqref{eq:EstimateOfQuatities-E6-E7-Navier-Stokes}
\begin{equation}\label{eq:L1BoundOnTheError-Navier-Stokes}
\| S_{\nu_q} \|_{L^1((0,1))} \lesssimlarge a_0^{\frac{\eps \delta}{2}}. 
\end{equation}
This proves the first inequality in \eqref{eq:WhatWeWantToProveInStepSixOfTheAnomalousDissipationLemma}.

\textbf{Substep 6.4: }  In this step we prove that  $\| \mu \|_{TV} \geq 1/4$ and $\sup_{q} \| A_{\kappa_q} \|_{L^{\infty}((0,1))} < \infty$.

Firstly, we notice that $\mu$ is the weak* limit up to subsequence (in the sense of measure) of the non-negative sequence $\{ \nu_q  |\nabla \theta_{\nu_q} |^2 \}_{q \in m \N }$ that is such that \eqref{eq:total-diss} holds, therefore $\| \mu_T \|_{TV} \geq 1/4$ holds.
We now prove the second property.
Recall that $\tilde{\theta}_{\nu_q, i}$ solves \eqref{eq:ThetaTildeForNavierStokes} and $\mathcal{J}_{q,i} = [t_{q,i}, t_{q,i} + \tilde{t}_q]$. Define the function $\overline{N}_q \colon [0,1] \to \N$ as
\[
 \overline{N}_q(t) = \# \{ j \in \{ 1, \ldots, N_q \} : \mathbbm{1}_{\mathcal{J}_{q,i}}(t) \neq 0 \}.
\]
It follows from straightforward computations that $\| \overline{N}_q(t) \|_{L^{\infty}((0,1))} \lesssim a_0^{- \eps \delta} a_q^{\frac{\gamma}{1 + \delta} - 10 \eps - \gamma}$.
For all $t \in \mathcal{J}_{q,i}$,
\[
 \| \nabla \tilde{\theta}_{\nu_q, i}(t, \cdot) \|_{L^2(\T^3)}^2 \leq \| \nabla \tilde{\theta}_{\nu_q, i}(t_{q,i}, \cdot) \|_{L^2(\T^3)}^2 \lesssim a_0^{-6 \eps \delta} a_q^{-2 + \gamma}
\]
From this we get
\begin{align*}
 |A_{\nu_q}(t)| &\leq 2 \kappa_q \sum_{i = 1}^{N_q} \int_{\T^3} |\nabla \tilde{\theta}_{\nu_q, i}|^2 \, dx \mathbbm{1}_{\mathcal{J}_{q,i}} (t) = 2 \nu_q \sum_{i = 1}^{N_q} \| \nabla \tilde{\theta}_{\nu_q, i} \|_{L^2(\T^3)}^2 \mathbbm{1}_{\mathcal{J}_{q,i}} (t) \\
 &\leq 2 \nu_q \overline{N}_q(t) a_q^{-2 + \gamma} \lesssim a_0^{-7 \eps \delta}.
\end{align*}
 This proves the last inequality in \eqref{eq:WhatWeWantToProveInStepSixOfTheAnomalousDissipationLemma}. Combining this with \eqref{eq:L1BoundOnTheError-Navier-Stokes} implies that $\sup_{q} \| - \frac{1}{2} e_{\nu_q}^{\prime} \|_{L^{1}((0,1))} < \infty$. Therefore, up to subsequences
 \[
  A_{\nu_q} \overset{*}{\rightharpoonup} \mathcal{A} \in L^{\infty}((0,1)), \quad 
  S_{\nu_q} \overset{*}{\rightharpoonup} \mathcal{S} \in \mathcal{M}([0,1]) \quad \text{and} \quad - \frac{1}{2} e_{\nu_q}^{\prime} \overset{*}{\rightharpoonup} \mu_T \in \mathcal{M}([0,1])\,,
 \]
 where $\mu_T= \pi_{\#} \mu $.
 In virtue of \eqref{eq:L1BoundOnTheError-Navier-Stokes}, we find that $\| \mu_T - \mathcal{A} \|_{TV} = \| \mathcal{S} \|_{TV} \lesssim a_0^{\frac{\eps \delta}{2}} < \beta$ where the last holds up to choosing $a_0$ sufficiently small in  Subsection \ref{subsec:ParmeterDefinitions} depending on $\beta$ and universal constants. The proof of Lemma~\ref{lemma:LemmaAnomalousDissipationSpecialSequence} is complete.
\end{proof}

\subsection{Proof of Theorem~\ref{t_Onsager}}
Let $\{ u_\nu \}_{\nu \geq 0}, \{ F_\nu \}_{\nu \geq 0}$ and $\{ v_{\initial, \nu} \}_{\nu \geq 0}$ be defined as in Subsection~\ref{subsec:ConstructionsForNavierStokes4D}. By the previous lemma, the solutions $\{ \theta_{\nu} \}_{\nu \geq 0}$ solving the 3D advection-diffusion equations
\begin{equation}
 \left\{
 \begin{array}{ll}
 \partial_t {\theta}_{\nu} + u_{\nu} \cdot \nabla {\theta}_{\nu} = \nu \Delta {\theta}_{\nu}; \\
 {\theta}_{\nu} (0,x,y,z) = {\theta}_{\initial}(x,y,z). \\
 \end{array}
 \right.
\end{equation}
exhibit anomalous dissipation i.e.
\begin{equation}
 \limsup_{\nu \to 0} \nu \int_0^1 \int_{\T^3} |\nabla \theta_{\nu}|^2 \, dx dy dz \, dt > 0.
\end{equation}
For each $\nu \geq 0$, we define $v_{\nu} \colon [0,1] \times \T^4 \to \R^4$ as
\begin{equation} \label{v_nu}
 v_{\nu} (t,x,y,z,w)  =  
 \begin{pmatrix}
  u_{\nu}(x,y,z) \\
  \theta_{\nu}(t,x,y,z)
 \end{pmatrix}
 =
 \begin{pmatrix}
  u_{\nu}^{(1)}(x,y,z) \\
  u_{\nu}^{(2)}(x,y,z) \\
  u_{\nu}^{(3)}(x,y,z) \\
  \theta_{\nu}(t,x,y,z)
 \end{pmatrix}.
\end{equation}
Standard computations yield
\begin{equation}
 (v_{\nu} \cdot \nabla)v_{\nu} = 
  \begin{pmatrix}
  \partial_z u_{\nu}^{(1)} \\
  \partial_z u_{\nu}^{(2)} \\
  0 \\
  u_{\nu} \cdot \nabla_{x,y,z} \theta_{\nu}
 \end{pmatrix}.
\end{equation}
Recalling the collection of forces $\{ F_{\nu} \}_{\nu \geq 0}$ defined in Subsection~\ref{subsec:ConstructionsForNavierStokes4D}, we see that with pressure $p_{\nu} = 0$, $v_{\nu}$ solves
\begin{equation}
 \left\{
 \begin{array}{ll}
 \partial_t v_{\nu} + (v_{\nu} \cdot \nabla)v_{\nu} + \nabla p_{\nu} - \nu \Delta v_{\nu} = F_{\nu} ; \\
 \diver v_{\nu} = 0; \\
 v_{\nu}(0, x, y, z, w) = v_{\initial, \nu}(x, y, z, w). \\
 \end{array}
 \right.
\end{equation}
Notice that
\begin{equation} \label{AD_final}
 \limsup_{\nu \to 0} \nu \int_0^1 \int_{\T^3} |\nabla v_{\nu}|^2 \, dx dy dz \, dt \geq \limsup_{\nu \to 0} \nu \int_0^1 \int_{\T^3} |\nabla \theta_{\nu}|^2 \, dx dy dz \, dt > 0
\end{equation}
where the last inequality follows from Lemma~\ref{lemma:LemmaAnomalousDissipationSpecialSequence}. This proves \eqref{e:zeroth-law}. It follows from Lemma~\ref{lemma:LemmaAnomalousDissipationSpecialSequence} and the definition of $p_{\nu}$ above that $(v_{\nu},p_{\nu}) \weak (v_0, p_0)$ in $L^{\infty}((0,1) \times \T^4)$ {with $p_0 = 0$} and that $e$ is smooth in $[0,1]$. 

The fact that $v_0$ defined in \eqref{v_nu} solves the forced Euler equation with body force $F_0$ and that
\begin{equation}
 \int_{\T^4} F_0(t,x) \cdot v_0(t,x) \, dx = \int_{\T^4} \partial_z v_0^{(1,2)}  \cdot v_0^{(1,2)}  =0 \quad \forall t \in (0,1).
\end{equation}
follows from standard computations, observing that $\partial_z v_0^{(1,2)}, v_0^{(1,2)}$ are time-independent continuous functions. The remaining desired properties of the collection of body forces $\{ F_{\nu} \}_{\nu \in (0,1)}$ follow from Lemma~\ref{lemma:AboutTheCollectionOfBodyForces}. 

We observe that by definition of $v_\nu$ we have 
$$\nu |\nabla v_\nu|^2 = \nu | \nabla u_\nu|^2 + \nu | \nabla \theta_\nu |^2$$
and thanks to the estimate \eqref{prop:estimate_u} and the definition of  $u_\nu$ \eqref{u_nu}, for any $\nu \in (\nu_{q+1} , \nu_q ]$ we have 
$$\nu |\nabla u_\nu|^2 \leq \nu \sup_{j \leq q -1 } \| \nabla u_\nu \|_{L^\infty (\T^2 \times \mathcal{I}_j)}^2 \leq  C \nu_q a_{q-1}^{2- 2 \gamma} a_q^{-2 (1+ \eps \delta)} \leq a_{q-1}^{2 - 3 \gamma} \,,  $$
where the last holds thanks to \eqref{d:k_q}. We observe that, thanks to $\gamma < 2/3$, the last term goes to zero as $q \to \infty$.

Therefore, by definition of the velocity fields $\{ u_{\nu} \}_{\nu \in (0,1)}$ and Lemma~\ref{lemma:LemmaAnomalousDissipationSpecialSequence}, up to not relabelled subsequences, we have 
\begin{equation}
 \nu |\nabla v_{\nu}| \weak \mu
\end{equation}
where $\mu$ is the measure in Lemma~\ref{lemma:LemmaAnomalousDissipationSpecialSequence}. From \eqref{eq:NS-Lemma-H-Minus-1-Norm}, we deduce \eqref{e:mu-duchon-robert}.

\section{Duchon--Robert distribution vs. anomalous dissipation measure} \label{sec:duchon-robert-anomalous}

The following theorem is an interesting observation that arises from  computations of \cite{BCCDLS22} with other choices of viscosity parameters $\nu_q$.

\begin{proposition}[Duchon--Robert measure $\neq$ anomalous dissipation measure] \label{prop:duchon-anomalous}
For any $\alpha <1$ there exists a divergence-free initial datum $v_{\initial}$ and force $F_0 \in C^\alpha ((0,2)\times \T^3)$ and a {\em weak physical solution} of $3D$ forced Euler equations $v_0 \in L^\infty ((0,2) \times \T^3)$ such that there exists a unique  anomalous dissipation measure $\mu \in \mathcal{M}((0,2) \times \T^3)$ associated to $v_0$ such that $\mu \equiv 0$ and 
$$\mu  \neq \mathcal{D}[v_0]  \,,$$
where $\mathcal{D}[v_0]$ is the Duchon--Robert distribution (see \eqref{duchon-robert}) associated to $v_0$.
\end{proposition}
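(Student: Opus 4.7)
The plan is to rerun the BCCDLS22 convex integration scheme on $(0,2)\times \T^3$ and to pair the resulting weak Euler limit $v_0$ with a sequence of suitable Leray--Hopf solutions that converges to $v_0$ only weakly, not strongly, in $L^3_{t,x}$, and for which the viscous dissipation $\nu_q |\nabla v_{\nu_q}|^2$ vanishes in the sense of measures. The reason why breaking strong $L^3$ convergence is unavoidable lies in the Duchon--Robert identity: if $v_\nu \to v_0$ strongly in $L^3$ as $\nu \to 0$, one can pass to the distributional limit in the Navier--Stokes local energy equality and obtain $\mathcal D[v_0] = \lim_{\nu \to 0} \nu |\nabla v_\nu|^2 = \mu$. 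Therefore any example with $\mu \ne \mathcal D[v_0]$ must feature a weak-but-not-strong vanishing viscosity limit.

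First, I would rescale the BCCDLS22 construction onto the enlarged time interval $(0,2)$, producing a weak* $L^\infty$ Euler limit $v_0 \in L^\infty((0,2) \times \T^3)$, a force $F_0 \in C^\alpha$, and a smooth convex integration sequence $\{v_{0,q}\}_q$ concentrated at a frequency scale $\lambda_q \to \infty$ with $v_{0,q} \to v_0$ strongly in $L^3$. Using the flexibility of convex integration I would arrange the energy profile $e(t) = \|v_0(t,\cdot)\|_{L^2}^2$ to be strictly decreasing across an interior time $t_\star \in (0,2)$, so that $\mathcal D[v_0]$ is a non-trivial Radon measure in $\mathcal M((0,2) \times \T^3)$, in particular $\|\mathcal D[v_0]\|_{TV} > 0$.

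Next, in place of the critical BCCDLS22 viscosity pairing, I would take sub-critical viscosities $\nu_q \to 0$ together with a modified NS approximation $v_{\nu_q}$ obtained by superposing on $v_{0,q}$ an independent layer of high-frequency oscillations of frequency $\mu_q \to \infty$ and unit amplitude. These oscillations would be designed so that $v_{\nu_q} - v_{0,q} \rightharpoonup 0$ in $L^3_{t,x}$ but $|v_{\nu_q} - v_0|^2 \not\rightharpoonup 0$, which guarantees weak-but-not-strong convergence of $v_{\nu_q}$ to $v_0$. Enforcing the scale separation $\nu_q(\lambda_q^{2+\alpha} + \mu_q^{2+\alpha}) \to 0$, the force $F_{\nu_q}$ needed to make $v_{\nu_q}$ an exact solution of \eqref{e:NSE} converges to $F_0$ in $C^\alpha$; at the same time
\[
 \nu_q \int_0^2 \!\! \int_{\T^3} |\nabla v_{\nu_q}|^2 \, dx \, dt \lesssim \nu_q(\lambda_q^2 + \mu_q^2) \to 0,
\]
so that $\nu_q |\nabla v_{\nu_q}|^2 + \mathcal D[v_{\nu_q}] \rightharpoonup 0$ in $\mathcal M((0,2) \times \T^3)$, yielding an anomalous dissipation measure $\mu \equiv 0$ associated to $v_0$ which differs from $\mathcal D[v_0] \ne 0$.

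The hard part will be the construction of the additional high-frequency layer so that $v_{\nu_q}$ solves the forced Navier--Stokes equations with a force converging to $F_0$ in $C^\alpha$ and weak* limit equal to $v_0$. This requires superposing on the Euler convex integration profile $v_{0,q}$ a further Mikado-type building block carrying the desired defect, and absorbing the resulting Reynolds stress together with the viscous term $\nu_q \Delta v_{\nu_q}$ into the force. The required estimates are parallel to those in \cite{BCCDLS22}, but the scaling between $\nu_q$, $\lambda_q$ and $\mu_q$ is now chosen so that no dissipation survives in the limit --- at the expense of losing the strong $L^3$ convergence, precisely the computation hinted at in the statement of the proposition.
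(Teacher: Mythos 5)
Your diagnostic observation is correct: if $v_\nu\to v_0$ strongly in $L^3$ one can pass to the limit in the Duchon--Robert identity and conclude $\mu=\mathcal D[v_0]$, so any example with $\mu\ne\mathcal D[v_0]$ must have weak-but-not-strong convergence. But your route to achieving this is genuinely different from the paper's and, as proposed, does not close.

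The paper stays entirely inside the $2+\tfrac12$-dimensional shear-flow framework. It takes the $2d$ advecting velocity field $u$ of \cite{BCCDLS22}, extends it by zero to $(0,2)\times\T^2$, chooses a \emph{sub-critical} viscosity sequence $\nu_q=a_q^{2+7\eps(1+\delta)}$ and a mild time truncation $u_\nu$, and considers the scalar $\theta_\nu$ solving the advection-diffusion equation with velocity $u_\nu$. With this sub-critical scaling one shows $\nu\int|\nabla\theta_\nu|^2\to 0$, i.e.\ the approximating sequence does \emph{not} exhibit anomalous dissipation; on the other hand the inviscid limit $\theta$ is dissipative (it loses a fixed fraction of its $L^2$ norm past $t=1$ because of the BCCDLS22 mixing). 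The lift $v_\nu=(u_\nu,\theta_\nu)$ to $\T^3$ then solves forced $3d$ Navier--Stokes with $F_\nu\to F_0$ in $C^\alpha$, the limit $v_0=(u,\theta)$ solves forced Euler, $\mu=\lim\nu|\nabla v_\nu|^2=0$, and $\mathcal D[v_0]\ne 0$. The concentration defect $|\theta_\nu|^2\not\rightharpoonup|\theta|^2$ comes \emph{for free}: the approximations (nearly) conserve energy, the limit does not, so the $L^2$-mass cannot match. Crucially, since nothing depends on the third spatial coordinate, this defect never enters the nonlinearity: it lives in the transverse component of $v$ and contributes neither to the Reynolds stress nor to the force, which is why $v_0$ is still an exact Euler solution and $F_\nu\to F_0$ remains true.

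Your proposal manufactures the concentration defect by hand, superposing on a $3d$ convex-integration profile $v_{0,q}$ a unit-amplitude oscillation $w_q$ at frequency $\mu_q\to\infty$, and claims the leftover Reynolds stress plus viscous term can be absorbed into a force that still converges to $F_0$ in $C^\alpha$. This is where the argument fails. The self-interaction $w_q\otimes w_q$ can be arranged (Mikado flows) to be a pressure gradient. But the cross-terms $v_{0,q}\cdot\nabla w_q+w_q\cdot\nabla v_{0,q}$ carry no such cancellation; they are of order $\mu_q$ in $L^\infty$ and $\mu_q^{1+\alpha}$ in $C^\alpha$, and they dominate all the other error terms you track. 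Absorbing them into $F_{\nu_q}$ is incompatible with $F_{\nu_q}\to F_0$ in $C^\alpha$, regardless of how you choose $\nu_q$ and $\mu_q$. The shear-flow structure of the paper sidesteps exactly this obstruction: because the defect sits in the passive scalar component, there are no dangerous cross-terms in the momentum equation. If you want to stay with a genuinely $3d$ convex-integration ansatz rather than the $2+\tfrac12$-dimensional reduction, you would need a mechanism that makes these cross-terms small, which is not supplied by the scaling conditions you impose.
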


\begin{proof}

Let $\beta=0$ with $ \alpha$, $\gamma$, $\epsilon$, $\delta$, $\{ a_q \}_{q \in \N}$ and $\{ T_q \}_{q \in \N}$ as in \cite[Section 6]{BCCDLS22}. Let $u \in C^{\infty}_{\loc}((0,1) \times \T^2; \R^2) {\cap L^{\infty}([0,2] \times \T^2; \R^2)}$ be the velocity fields constructed in \cite[Proposition 3.1]{BCCDLS22} with these parameters (restricted to the time interval $(0,1)$). Extend it as zero to $(0,2) \times \T^2$ whenever $t \geq 1$. 
We define the sequence of viscosity parameters 
$$\nu_q = a_q^{2 + 7 \epsilon (1+ \delta)}.$$
For any $\nu \in (0, a_0^{3})$, there exists a unique $q = q(\nu) \in \N$ such that $\nu \in (\nu_{q+1} , \nu_q]$. For any $\nu \in (0, a_0^{3})$ given this unique $q$, we set $K_\nu = [0,1-T_q]$ and define
\begin{equation}
     u_\nu (t,x ) = u(t,x) \mathbbm{1}_{K_\nu} (t) \quad t \in (0,2), \, x \in \T^2.
\end{equation}
We observe that $u_\nu$ is smooth for any $\nu \in (0, a_0^{3})$ (notice that {$u \in C^{\infty}_{\loc}((0,1) \times \T^2; \R^2)$ and} $u \equiv 0$ locally around $1-T_q$ for any $q$) and
$$\| u_\nu - u \|_{L^\infty((0,2) \times \T^2)} \to 0 $$
as $\nu \to 0$. 
The velocity fields $u$ satisfies the uniqueness of forward trajectories {since it belongs to $C^{\infty}_{\loc}((0,1) \times \T^2; \R^2) \cap L^{\infty}([0,2] \times \T^2; \R^2)$ and $u(t, \cdot) \equiv 0$ for all $t > 1$} by construction. By the superposition principle, the advection equation with velocity field $u$ has a unique {bounded} solution $\theta \colon (0,2) \times \T^2 \to \R$ with initial datum $\theta_{\initial} \in C^{\infty}(\T^2)$ where $\theta_{\initial}$ is defined as in \cite[Proposition 3.1]{BCCDLS22}.
{Hence, $\theta$ must be the weak limit of solutions of \eqref{e:ADV-DIFF} with $u$ and $\nu \to 0$.}
Thanks to \cite[Property (5) of Proposition 3.1]{BCCDLS22}, energy equality \eqref{e:energy-equality-global} {applied to the solutions of \eqref{e:ADV-DIFF} with $u$ and $\nu \to 0$} and uniqueness of the advection equation solutions in $L^\infty$, it is straightforward that $\theta$ is a dissipative solution in $(0,2)$, namely 
\begin{align} \label{eq:dissipation-theta-0}
\| \theta (t, \cdot ) \|_{L^2 (\T^2)} \leq \frac{1}{2} \| \theta_{\initial} \|_{L^2 (\T^2)}  \qquad \text{ for any  } t > 1.
\end{align} 
For $\nu \in (  \nu_{q+1},  \nu_q]$ we define $ \theta_{\nu} : [0,2] \times \T^3 \to \R$ to be the unique smooth solution to the advection-diffusion equation \eqref{e:ADV-DIFF} with diffusion parameter $\nu$, initial datum $\theta_{\initial} \in C^\infty (\T^2)$ and velocity field $ u_\nu$, 
i.e.
\begin{align*}
\partial_t \theta_{\nu} + {u}_{\nu} \cdot \nabla \theta_{\nu} = \nu \Delta \theta_{ \nu} \, .
\end{align*} 
Up to this point, all functions were defined on $(0,2) \times \T^2$. 
From now on, we consider all these functions as being defined on $(0,2) \times \T^3$ by extending them independently to the third spatial variable\footnote{i.e. for any function $f$ defined on $(0,2) \times \T^2$, we redefine it as $\tilde{f}$ on $(0,2) \times \T^3$ by $\tilde{f}(t, x_1, x_2, x_3) = f(t, x_1, x_2)$.}.  
Then, we define smooth functions $F_{\nu}, v_\nu : [0,2] \times \T^3 \to \R^3$ and $p_\nu : [0,2] \times \T^3 \to \R$ as 
\begin{align*}
F_{\nu}(t,x) 
    & = \begin{pmatrix}
\partial_t  u_\nu (t,x) - \nu \Delta  u_\nu (t,x)
\\
0
\end{pmatrix}
\\
v_{\nu} (t,x ) & = \begin{pmatrix}
   u_\nu(t,x)
 \\
 \theta_{\nu} (t,x)
\end{pmatrix}
\\p_{\nu} &= 0\,,
\end{align*}
which are independent on $x_3$. 
 Finally, we set
\begin{align*}
 v_{\initial } = \begin{pmatrix}
0 
\\
 \theta_{\initial}
\end{pmatrix}\, .
\end{align*}
Similarly to \cite{BCCDLS22} we can verify that $\| F_\nu - F_0 \|_{C^\alpha_{t,x}}  \to 0 $ where $F_0 = (\partial_t u , 0)$ as $\nu \to 0$ and $\| u_\nu - u \|_{C^\alpha_{t,x}} \to 0$ as $\nu \to 0$  and thanks to uniqueness of the advection equation $\theta_{\nu } $ is $L^\infty$ weakly* converging to $\theta$ as $\nu \to 0$, which implies that $v_\nu \rightharpoonup^* v_0$, where
$$v_{0} (t,x )  = \begin{pmatrix}
  u(t,x)
 \\
 \theta (t,x)
\end{pmatrix} $$
 is a solution to the 3D forced Euler equations by a direct computation. Since $\theta$ {satisfies} \eqref{eq:dissipation-theta-0} we have that $\mathcal{D}[v_0] \neq 0$. We now show that there exists a unique anomalous dissipation measure $\mu \in \mathcal{M} ((0,2) \times \T^3)$ and it is $\mu \equiv 0$.
Let $\nu \in (0, a_0^{3})$. We have that 
\begin{align*}
\nu \int_0^2 \int_{\T^3} |\nabla \theta_{\nu} (t,x)|^2 dx dt = \nu \int_0^{1 -T_q} \int_{\T^3} |\nabla \theta_{\nu} (t,x)|^2 dx dt + \nu \int_{1  - T_q}^2 \int_{\T^3} |\nabla \theta_{\nu} (t,x)|^2 dx dt \,.
\end{align*}
Thanks to \cite[Property (4) of Proposition 3.1]{BCCDLS22} we have that 
\begin{align} \label{eq:estimate-nu-nabla-theta-1}
\nu \int_0^{1-T_q} \int_{\T^3} \nabla {\theta_\nu} (t, x) \, dt dx   \leq \nu_q \sum_{k=0}^{q} a_k^{-2 - 6 \epsilon (1+ \delta)} \leq q \nu_q  a_q^{-2 - 6 \epsilon (1+ \delta)} \leq a_q^\epsilon  \to 0 \,,
\end{align}
where the last inequality holds for $q$ sufficiently large to reabsorb $q  a_q^{\epsilon \delta} \leq 1$.
{Using also that $u_\nu (t, \cdot ) \equiv 0$ for any $t \geq 1 - T_q$ we find that for any $t \geq 1 - T_q$, $\theta_{\nu}$ solves the heat equation. Hence $\nabla \theta_{\nu}$ solves the heat equation for any $t \geq 1 - T_q$. and by standard heat equation regularity we have}
\begin{align}\label{eq:estimate-nu-nabla-theta-2}
\nu \int_{1  - T_q}^2 \int_{\T^3} |\nabla \theta_{\nu} (t,x)|^2 dx dt  \leq 2 \nu \| \nabla \theta_{\nu} (1 - T_q , \cdot) \|_{L^\infty (\T^2)}^2 \leq   \nu_q  a_q^{-2 - 6 \epsilon (1+ \delta)} \leq a_q^\epsilon \,.
\end{align}

 Using the estimates of \cite[Property (3) of Proposition 3.1]{BCCDLS22} we also have 
 \begin{align} \label{eq:nu-u_q}
 \nu \| \nabla u_\nu \|_{L^2((0,2) \times \T^3)}^2  & \leq \nu_q \sum_{j =0}^{q-1} \| \nabla u_\nu \|_{L^\infty ((1-T_j, 1- T_{j+1}) \times \T^3)}^2 \notag
 \\
 & \leq C a_q^{2 + 7 \epsilon (1+ \delta)} \sum_{j=0}^{q-1} a_j^{2 - 2 \gamma} a_{j+1}^{-2 (1 +  \epsilon \delta)} 
 \\
 & \leq C q a_q^{2 + 7 \epsilon (1+ \delta)} a_q^{-2 (1+ \epsilon \delta)} \leq C q a_q^{7 \epsilon + 5 \epsilon \delta}
\leq  a_q^\epsilon \,, \notag
 \end{align}
 where we used that $\gamma <1$ and $q$ sufficiently large to reabsorb $C q a_q^{\epsilon} \leq 1$.
 Therefore
 we conclude that 
 $$ \mu = \lim_{\nu \to 0} \nu |  \nabla v_\nu |^2 = 0 \,,$$
thanks to \eqref{eq:estimate-nu-nabla-theta-1}, \eqref{eq:estimate-nu-nabla-theta-2} and \eqref{eq:nu-u_q}.
\end{proof}

As a corollary, the previous theorem yields a sequence of vanishing viscosity solutions $\{v_\nu \}_{\nu >0}$ of the $3D$ forced Navier--Stokes equations such that $v_\nu \rightharpoonup^* v_0$ where $v_0$ is a solution to the $3D$ forced Euler equations but they are distant in $L^p$.

\begin{corollary} \label{corollary:duchon-anomalous}
 Let  $\alpha \in (0,1)$. There exists  a weak physical solution $v_0 \in L^\infty$ of the $3D$ forced Euler equations with initial datum $v_{\initial } \in C^\infty$ and force $F_0 \in C^\alpha$ for which there exists $\varepsilon >0$ such that  the vanishing viscosity sequence $\{v_{\nu} \}_{\nu >0} \subset C^\infty$  of the $3D$ forced Navier--Stokes equations with forces $F_\nu $ satisfies $\sup_{\nu >0} \| v_\nu \|_{L^\infty} < \infty$, $F_\nu \to F_0 $ in $C^\alpha$  and
$$ \| v_\nu - v_0 \|_{L^p} \geq \varepsilon \qquad \text{ for any } p \in [1, \infty] \,.$$
\end{corollary}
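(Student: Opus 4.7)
The plan is to argue by contradiction, leveraging the dichotomy established in Proposition \ref{prop:duchon-anomalous}: the unique anomalous dissipation measure satisfies $\mu \equiv 0$ while the Duchon--Robert distribution $\mathcal{D}[v_0]$ is non-trivial. Take $v_0$, $\{v_\nu\}_{\nu > 0}$, $v_{\initial}$ and $\{F_\nu\}_{\nu \geq 0}$ to be exactly the objects built in the proof of that proposition; the required regularity (uniform $L^\infty$ bound, $C^\alpha$ convergence of forces, smoothness of each $v_\nu$, weak-$*$ convergence $v_\nu \weak v_0$) is already built in. Moreover the construction enjoys the special feature that one may take $p_\nu \equiv 0$ for every $\nu$, and the shear-flow structure of the limiting velocity $u$ also yields $p_0 \equiv 0$, so the pressure flux is trivially handled.

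Suppose for contradiction that along some (not relabeled) subsequence $v_\nu \to v_0$ in $L^p((0,2) \times \T^3)$ for some $p \in [1, \infty]$. Since $C := \sup_\nu \|v_\nu\|_{L^\infty} < \infty$ and the base space has finite measure, H\"older interpolation immediately upgrades this to strong convergence $v_\nu \to v_0$ in $L^3$. We then pass to the limit in the Duchon--Robert identity \eqref{duchon-robert} written for the smooth Navier--Stokes solutions $v_\nu$ (so that $\mathcal{D}[v_\nu] \equiv 0$ and $p_\nu \equiv 0$):
$$\partial_t \tfrac{|v_\nu|^2}{2} + \diver\bigl(v_\nu \tfrac{|v_\nu|^2}{2}\bigr) + \nu |\nabla v_\nu|^2 = F_\nu \cdot v_\nu + \tfrac{\nu}{2} \Delta |v_\nu|^2.$$
Each term converges distributionally: the strong $L^3$ convergence of $v_\nu$ gives $v_\nu |v_\nu|^2 \to v_0 |v_0|^2$ in $L^1$ and $|v_\nu|^2 \to |v_0|^2$ in $L^{3/2}$; the $C^\alpha$ convergence of $F_\nu$ paired with the $L^1$ convergence of $v_\nu$ gives $F_\nu \cdot v_\nu \to F_0 \cdot v_0$ in $L^1$; the dissipation $\nu |\nabla v_\nu|^2 \weak \mu = 0$ in the sense of measures by the proposition; and $\nu \Delta |v_\nu|^2 \to 0$ distributionally thanks to the uniform $L^\infty$ bound and $\nu \to 0$. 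Comparing with the Duchon--Robert identity written for $v_0$ (with $p_0 \equiv 0$), these convergences force $\mathcal{D}[v_0] \equiv 0$ in $\mathcal{D}'((0,2) \times \T^3)$, contradicting Proposition \ref{prop:duchon-anomalous}.

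Hence no subsequence of $\{v_\nu\}$ converges to $v_0$ in any $L^p$, and in particular $\liminf_\nu \|v_\nu - v_0\|_{L^3} =: \delta > 0$. Combining $\delta$ with the uniform $L^\infty$ bound via the standard interpolations $\|f\|_{L^3} \leq \|f\|_{L^p}^{p/3} \|f\|_{L^\infty}^{1 - p/3}$ for $p \in [1,3]$ and $\|f\|_{L^3} \leq 2^{\sfrac{1}{3} - \sfrac{1}{p}} \|f\|_{L^p}$ for $p \in [3, \infty]$, one obtains a positive lower bound $\|v_\nu - v_0\|_{L^p} \geq \varepsilon(p) > 0$ for every $p \in [1, \infty]$. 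Since $p \mapsto \varepsilon(p)$ is continuous and strictly positive on the compact set $[1, \infty]$, its infimum is a single $\varepsilon > 0$ valid for all $p$.

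The only delicate point, which in a more general situation would require Calder\'on--Zygmund estimates for the Poisson problem $-\Delta p_\nu = \diver \diver(v_\nu \otimes v_\nu) - \diver F_\nu$ in order to pass the pressure flux $\diver(v_\nu p_\nu)$ to the limit, is sidestepped here: in the proposition the pressure is identically zero for every $\nu$, and the shear-flow structure of $u$ propagates $p_0 \equiv 0$ to the Euler limit, so no further analysis of the pressure is needed.
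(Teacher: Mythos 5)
Your proof is correct and follows essentially the same route as the paper's: take $v_0$ from Proposition~\ref{prop:duchon-anomalous}, suppose a subsequence converges in some $L^p$, upgrade to $L^3$ via the uniform $L^\infty$ bound, pass to the limit in the Duchon--Robert identity to conclude that the (unique, vanishing) anomalous dissipation measure coincides with $\mathcal{D}[v_0]$, and contradict the proposition. You spell out two things the paper leaves implicit — that the pressure flux causes no trouble because $p_\nu \equiv p_0 \equiv 0$ in that construction, and how the positive $\liminf$ in $L^3$ is spread to a single $\varepsilon$ valid uniformly in $p \in [1,\infty]$ via the two interpolation inequalities on the finite-measure domain — but these are elaborations of the same argument, not a different one.
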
 

\begin{proof}
Take $v_0$ the weak physical solution of $3D$ forced Euler given in Proposition \ref{prop:duchon-anomalous}.
We argue by contradiction assuming that there exists $p \in [1, \infty]$ such that
$$ \liminf_{\nu \to 0}   \| v_\nu - v_0 \|_{L^p}  =0$$
and thanks to the $L^\infty$ bounds on $v_\nu$ and $v_0$ we can assume $p=3$.
 Therefore there exists a subsequence $\{ v_{\nu_q} \}_{\nu_q}$ such that $v_{\nu_q} \to v_0$ in $L^3$ and thanks to the fact that along this subsequence all the terms pass into the limit in \eqref{duchon-robert} we have that there exists an anomalous dissipation measure associated to $v_0$ $\mu \in \mathcal{M}((0,2) \times \T^3)$ which is equal to the Duchon-Robert distribution of $v_0 $  (defined in \eqref{duchon-robert}), namely $\mu = \lim_{\nu_q \to 0} \nu_q |\nabla v_{\nu_q}|^2 = \mathcal{D}[v_0]$ in the sense of distributions. However, Proposition \ref{prop:duchon-anomalous} showed that there exists a unique anomalous dissipation measure associated to $v_0$ $\mu$ which is such that $\mu \equiv 0 \neq \mathcal{D}[v_0]$ leading to a contradiction.
\end{proof}

\bibliographystyle{alpha}
 \bibliography{biblio}
 
\end{document}